\newtheorem{mytheorem}{Theorem}
\newtheorem{mylem}[mytheorem]{Lemma}
\newtheorem{mydef}[mytheorem]{Definition}
\newtheorem{mycor}[mytheorem]{Corollary}
\newtheorem{remark}[mytheorem]{Remark}
\numberwithin{mytheorem}{section}
\numberwithin{equation}{section}
\DeclareMathOperator\inn{Int}
\DeclareMathOperator*{\esssup}{ess\,sup}
\DeclareMathOperator*{\essosc}{ess\,osc}
\DeclareMathOperator\divv{div}
\DeclareMathOperator\loc{loc}
\newcommand{\bigchi}{\scalebox{1.3}{$\chi$}}
\DeclareRobustCommand*{\bfseries}{%
  \not@math@alphabet\bfseries\mathbf
  \fontseries\bfdefault\selectfont
  \boldmath
}
\newcommand{\foo}[1]{\mathbf{#1}}
\newtheorem{myproposition}[mytheorem]{Proposition}
\numberwithin{mytheorem}{section}
\numberwithin{equation}{section}
\renewcommand{\d}{\mathrm{d}}
\newcommand{\dx}{\mathrm{d}x}
\newcommand{\dt}{\mathrm{d}t}
\newcommand{\ds}{\mathrm{d}s}
\newcommand{\tildexi}{\Tilde{\xi}}
\newcommand{\R}{\mathbb{R}}
\newcommand{\N}{\mathbb{N}}
\renewcommand{\epsilon}{\varepsilon}
\renewcommand{\d}{\mathrm{d}}
\newcommand{\F}{\mathcal{F}}
\newcommand{\G}{\mathcal{G}}
\newcommand{\B}{\mathcal{B}}
\newcommand{\h}{\mathcal{H}}
\renewcommand{\epsilon}{\varepsilon}
\newcommand{\uproman}[1]{\uppercase\expandafter{\romannumeral#1}}
\def\YYint#1#2#3{{\setbox0=\hbox{$#1{#2#3}{\iint}$}
    \vcenter{\hbox{$#2#3$}}\kern-.51\wd0}}
\def\Xint#1{\mathchoice
{\XXint\displaystyle\textstyle{#1}}%
{\XXint\textstyle\scriptstyle{#1}}%
{\XXint\scriptstyle\scriptscriptstyle{#1}}%
{\XXint\scriptscriptstyle\scriptscriptstyle{#1}}%
\!\int}
\def\XXint#1#2#3{{\setbox0=\hbox{$#1{#2#3}{\int}$ }
\vcenter{\hbox{$#2#3$ }}\kern-0.555\wd0}}
\def\fint{\Xint-}
\newcommand{\vertiii}[1]{{\left\vert\kern-0.25ex\left\vert\kern-0.25ex\left\vert #1 
    \right\vert\kern-0.25ex\right\vert\kern-0.25ex\right\vert}}
\subjclass[2020]{35B65, 35G20, 35J70}
\keywords{Widely degenerate elliptic PDEs, Weak solutions, Gradient regularity}
\begin{document}
\title[Gradient regularity for degenerate elliptic equations]{Gradient regularity for widely degenerate elliptic partial differential equations}
\date{\today}


\author[M. Strunk]{Michael Strunk}
\address{Michael Strunk\\
Fachbereich Mathematik, Universit\"at Salzburg\\
Hellbrunner Str. 34, 5020 Salzburg, Austria}
\email{michael.strunk@plus.ac.at}


\begin{abstract}
In this paper, we investigate the regularity of weak solutions~$u\colon\Omega\to\R$ to elliptic equations of the type
\begin{equation*}
    \mathrm{div}\, \nabla \F(x,Du) = f\qquad\text{in~$\Omega$},
\end{equation*}
whose ellipticity degenerates in a fixed bounded and convex set~$E\subset\R^n$ with~$0\in \mathrm{Int}\, E$. Here,~$\Omega\subset\R^n$ denotes a bounded domain, and~$\F \colon \Omega\times\R^n \to\R_{\geq 0}$ is a function with the properties: for any~$x\in\Omega$, the mapping~$\xi\mapsto \F(x,\xi)$ is regular outside~$E$ and vanishes entirely within this set. Additionally, we assume~$f\in L^{n+\sigma}(\Omega)$ for some~$\sigma > 0$, representing an arbitrary datum. Our main result establishes the regularity
 \begin{equation*}
     \mathcal{K}(Du)\in C^0(\Omega)
 \end{equation*}
for any continuous function~$\mathcal{K}\in C^0(\R^n)$ vanishing on~$E$.
\end{abstract}


\maketitle
\vspace{-0.5cm}
\tableofcontents


\section{Introduction and main results}
We study the behavior of weak solutions to 
\begin{equation} \label{pde}
    \divv \nabla \F(x,Du) = f\qquad\text{in~$\Omega$}.
\end{equation}
Here and in the following,~$\Omega$ denotes a bounded domain in~$\R^n$ ($n\geq 2$). The datum~$f$ is assumed to belong to~$L^{n+\sigma}(\Omega)$ for some parameter~$\sigma>0$.  For any~$x\in\Omega$, the function~$\F\colon\Omega\times\R^n \to\R_{\geq 0}$ is required to satisfy certain regularity and structure conditions outside a bounded and convex set~$E\subset\R^n$ with~$0\in\inn E$ but vanishes entirely inside~$E$. Outside of~$E$, i.e. in the set of points~$\{ \xi\in\R^n: \xi\notin \overline{E} \}$, we assume~$\F$ to be nondegenerate with respect to the~$\xi$ variable, where the smallest eigenvalue of its Hessian~$\nabla^2\F$ degenerates either when approaching the set~$E$ from outside or for large values of~$\xi$. We refrain from stating the precise conditions imposed on~$\F$ for the moment and rather refer the reader to Section~\ref{subsec:assumptions}. It is a well-known fact from the theory of calculus of variations that equation~\eqref{pde} can also be interpreted as the Euler-Lagrange equation of the respective variational functional 
\begin{equation} \label{minimizer}
    \mathcal{L}(u) \coloneqq \int_{\Omega} \big(\F(x,Du) + f u\big)\,\dx.
\end{equation}
We recall that any local minimizer~$u$ to the functional~$\mathcal{L}(u)$ in a suitable set of admissible functions is a weak solution to~\eqref{pde}. The particular characteristic of equation~\eqref{pde} is given by the large set of degeneracy of~$\F$. This circumstance is problematic in the sense that less regularity of weak solutions to~\eqref{pde} in general can be expected when comparing it to equations degenerating only at a single point, as it is for example the case with the well-known~$p$-Laplace equation. In particular, no more than Lipschitz regularity of weak solutions can be expected in general. To illustrate this point, consider the prototypical example of widely degenerate equations that we always keep in mind, namely
\begin{equation} \label{prototype}
   \divv \bigg(a(x) \frac{(|Du|-1)^{p-1}_+ Du}{|Du|}\bigg) = f\qquad\text{in~$\Omega$},
\end{equation}
where~$p>1$ denotes an arbitrary growth exponent and~$a\in W^{1,\infty}(\Omega)$ denotes a Lipschitz-continuous function with the property that~$0<C_1 \leq a(x) \leq C_2$ for any~$x\in\Omega$ and positive constant~$C_1\leq C_2$. We note that the model equation~\eqref{prototype} represents the Euler-Lagrange equation of the functional
\begin{equation}\label{prototypefunctional}
  \mathcal{L}_p(u) \coloneqq \int_{\Omega} \bigg( \frac{a(x)}{p} (|Du|-1)^p_+ + f u\bigg)\,\dx,
\end{equation}
where local minimizers of~$\mathcal{L}_p(u)$ are considered in the function space~$W^{1,p}_{\loc}(\Omega)$. Accordingly, any~$1$-Lipschitz function is indeed a weak solution to~\eqref{prototype} in the case where~$f\equiv 0$, resulting in a lack of higher regularity of weak solutions in general. In particular, the desired~$C^{1,\alpha}_{\loc}(\Omega)$-regularity is not to be expected in general. Nevertheless, it is still possible to study the gradient regularity in the subset of points where~$\{ x\in\Omega: Du \in\R^n\setminus E \}$, as the equation ensures regularity in these points. Notably, the gradient is indeed continuous within this subset. More broadly, as outlined in our main regularity result, Theorem~\ref{hauptresultat}, the regularity of the composition~$\mathcal{K}(Du)\in C^0(\Omega)$ can be established for any continuous mapping~$\mathcal{K}\in C^0(\R^n)$ that vanishes on~$E$. The novelty of this article presents itself in two ways. Firstly, we allow non-autonomous functionals, i.e. the integrand $\F$ may additionally depend on the spatial variable~$x\in\Omega$. Secondly, we employ a proof technique differing from the methods utilized in prior research on this topic, such as~\cite{colombo2017regularity,santambrogio2010continuity}. Our approach is inspired by the pioneering works of De Giorgi, DiBenedetto, Friedman, and Uhlenbeck,
which have demonstrated considerable flexibility in previous applications. 


\subsection{Literature overview} In previous works such as~\cite{brasco2011global,brascocongested,clop2020very,fonseca2002existence}, the local Lipschitz regularity of weak solutions in the domain~$\Omega$ has been proven for certain variants of~\eqref{pde}, where the integrand~$\F$ is assumed to satisfy similar growth and ellipticity conditions to the prototype equation~\eqref{prototype}, subject to appropriate assumptions regarding the datum~$f$. In view of higher differentiability or $C^1$-regularity of solutions, rather than investigating these properties for~$Du$ itself, more about regularity of the composition of~$Du$ with a suitable function~$\mathcal{K}\colon \R^n\to\R$ can be said. For example, it has been shown in~\cite{brasco2011global,brasco2014certain,clop2020very} that the composition~$\mathcal{K}(Du)$ is weakly differentiable, where~$\mathcal{K}\colon \R^n\to\R$ denotes a certain nonlinear function vanishing on the closed unit ball. Similarly, any result regarding gradient continuity can never directly concern~$Du$ itself, since the equation does not provide information about regions where~$\{|Du| \leq 1\}$ and thus fails to capture any values in this case. However, for the quantity~$\h(x,Du)\coloneqq \nabla\F(x,Du)$ respectively for~$\mathcal{K} (Du)$, where~$\mathcal{K} \in C^0(\R^n)$ denotes an arbitrary continuous function vanishing in the closed unit ball, some $C^1$-regularity results have been obtained. First shown by Santambrogio and Vespri in dimension~$n=2$ for a variant of equation~\eqref{pde}, where the function~$\F$ does not depend on the variable~$x$, they derived in~\cite{santambrogio2010continuity} that the composition of any continuous function vanishing in the unit ball with~$
D u$ is again continuous. This result was subsequently generalized by Colombo and Figalli in~\cite{colombo2017regularity} for any dimension~$n \in\N$ and a more general bounded and convex set~$E\subset\R^n$ with~$0\in \inn{E}$ as the set of degeneracy, not necessarily being the unit ball. We note that these assumptions on the set of degeneracy~$E$ align with those outlined in this manuscript. Yet, the proofs found in both~\cite{colombo2017regularity,santambrogio2010continuity} are specifically designed for the scalar case and cannot be easily extended to accommodate the vectorial case, i.e. the case where~$u=(u_1,\ldots,u_N)\in\R^N$ for some~$N\geq 2$. In the context of the prototype example presented in~\eqref{prototype} without Lipschitz continuous coefficients~$a(x)$ however, a similar result was demonstrated by Bögelein, Duzaar, Giova, and Passarelli di Napoli in their recent work~\cite{bogelein2023higher}. Their approach, inspired by Campanato-type comparison estimates and a De Giorgi-type level set method, has proven to be highly flexible, allowing also the treatment of vector valued solutions. This result was later generalized by Mons in~\cite{mons2023higher} who recovered the very same regularity result for vector valued solutions to a more general class of widely degenerate equations, with the vector field incorporated in the diffusion term assumed to satisfy standard~$p$-growth conditions in the set of points~$\{ |\xi|>1 \}$. Moreover, Grimaldi~\cite{grimaldi2024higher} extended the analysis to vector-valued solutions~$u \colon \Omega \to \R^N$,~$N \geq 1$, considering a more general structural setting. In this framework, the standard Euclidean norm in~\eqref{prototype} is replaced by a norm that is induced by a bounded, symmetric, and coercive bilinear form defined on $\mathbb{R}^{Nn}$. 
In this paper, we extend the approach established in~\cite{bogelein2023higher,colombo2017regularity,mons2023higher,santambrogio2010continuity} to equations that degenerate within a fixed bounded and convex set~$E \subset \mathbb{R}^n$ with~$0 \in \inn{E}$. Our approach does not rely on any specific growth or monotonicity conditions on~$\mathcal{F}$ outside~$E$, and also includes the case where~$\mathcal{F}$ depends on the spatial variable~$x$. The flexibility of this method suggests potential for further exploration, potentially culminating in a corresponding regularity result in the parabolic setting. 


\subsection{Structure conditions}\label{subsec:assumptions}
Throughout this paper, we consistently assume the following set of structure conditions to hold true. Let~$E\subset \R^n$ denote a bounded and convex set with~$0\in\inn E$. By~$|\cdot|_E$ we denote the~\textit{Minkowski functional} (sometimes also \textit{Gauge functional}) on~$\R^n$, given by
\begin{align} \label{Minkowski}
   |\cdot|_E \colon\R^n \to\R_{\geq 0},\qquad |\xi|_E \coloneqq \inf\{ t>0:\xi\in tE  \},
\end{align}
where~$tE\coloneqq\{tx:x\in E\}$. As already considered by Colombo and Figalli~\cite{colombo2017regularity} in a very similar context, the Minkowski functional will also turn out expedient in our discussion throughout this manuscript. Several properties of~$|\cdot|_E$ shall be discussed later on in Section~\ref{subsec:convex}. We consider a non-negative function
$$\F\colon\Omega\times\R^n\to\R_{\geq 0}$$
that is subject to the following assumptions
\begin{align} \label{fregularity}
        \begin{cases}
    \F(x,\xi) = 0  & \mbox{for any~$x\in\Omega$,~$\xi\in E$}, \\
   \xi \mapsto \F(x,\xi)~\mbox{is convex}  &\mbox{for any~$x\in\Omega$},  \\
    \xi \mapsto\F(x,\xi)\in C^1(\R^n) \cap C^2(\R^n\setminus \overline{E} ), & \mbox{for any~$x\in\Omega$}  \\ 
   |\partial_{x_i} \nabla\F(x,\xi)| \leq C(L) &\mbox{for any~$x\in\Omega$,~$|\xi|_E \leq L$}, 
        \end{cases}
  \end{align}
for any~$i=1,\ldots,n$ and any~$L\geq 1$, where~$C=C(L)$ denotes a positive Lipschitz constant that depends on~$L\geq 0$. Put differently,~$\nabla^2 \F$ is assumed to be continuous outside of~$E$ with respect to the gradient variable~$\xi$ for any~$x\in\Omega$, whereas~$\nabla\F$ is assumed to be continuous on the whole of~$\R^n$ and also locally Lipschitz continuous with respect to~$x$. However, the~$C^2$-regularity of~$\F$ with respect to~$\xi$ may break down at the boundary~$\partial E$ of the set of degeneracy. Moreover,~$\F$ vanishes entirely for any~$x\in\Omega$ whenever there holds~$\xi\in E$, whereas the partial mapping~$\xi\mapsto \F(x,\xi)$ is additionally assumed to be convex. Here, the operators~$\nabla$ and~$\nabla^2$ always denote the derivatives with respect to the gradient variable~$\xi$, whereas~$\partial_x $ represents the derivative with respect to the spatial variable~$x$. In addition, we assume that~$\mathcal{F}$ is elliptic outside the degeneracy set~$E$. That is, for every~$\delta > 0$, there exist positive constants~$\lambda=\lambda(\delta)$ and $\Lambda=\Lambda(\delta)$ with $0<\lambda\le\Lambda$, such that
\begin{equation} \label{voraussetzung}
  \qquad  \lambda(\delta) |\eta|^2 \leq \langle \nabla^2 \F(x,\xi) \eta,\eta \rangle \leq \Lambda(\delta) |\eta|^2\qquad\text{for any~$1+\delta \leq |\xi|_E \leq \delta^{-1}$}
\end{equation}
for a.e.~$x\in\Omega$. In other words, for any~$x\in\Omega$ the smallest eigenvalue of~$\nabla^2\F(x,\xi)$ is bounded below by~$\lambda(\delta)$ as long as~$1+\delta \leq |\xi|_E \leq \delta^{-1}$, but may vanish in the limit~$\delta \downarrow 0$. On the contrary, the largest eigenvalue of~$\nabla^2\F(x,\xi)$ is bounded above by~$\Lambda(\delta)$, but can grow arbitrarily fast when~$\delta$ decreases. \,\\


As an example of an integrand~$\mathcal{F}$ satisfying the structural conditions~\eqref{fregularity} and~\eqref{voraussetzung}, we consider the prototype given in~\eqref{prototypefunctional}, i.e.
\begin{equation*}
    \F_p(x,\xi) = \frac{a(x)}{p}(|\xi|-1)^p_+ ,
\end{equation*}
where~$a\in W^{1,\infty}(\Omega,\R_{> 0})$ denotes a Lipschitz continuous function with~$0<C_1 \leq a(x) \leq C_2$ that satisfies~$|\partial_{x_i}a(x)| \leq A$ for any~$i=1,\ldots,n$ and a.e.~$x\in\Omega$. Here,~$A,C_1,C_2$ denote positive constants. The degeneracy set of~$\F_p$ is given by the unit ball, i.e.~$E=B_1(0)$ and the associeated Minkowski functional~$|\cdot|_E$ is exactly the standard Euclidean norm on~$\R^n$, i.e.~$|\xi|_E=|\xi|$ for any~$\xi\in\R^n$. A direct calculation verifies that there holds
$$\nabla\F_p(x,\xi) = a(x) \frac{(|\xi|-1)^{p-1}_+}{|\xi|}\xi,$$
which further implies that for any~$i=1,\ldots,n$ we have
\begin{equation*}
    |\partial_{x_i} \nabla \F_p (x,\xi)| \leq A(|\xi|-1)_+^{p-1} \leq A (L-1)^{p-1}
\end{equation*}
for any~$x\in\Omega$ and any~$1\leq |\xi|\leq L$, where~$1\leq L < \infty$. Moreover, we have
\begin{align} \label{prototypehessian}
    \nabla^2 \F_p(x,\xi) = a(x)\bigg(\frac{(|\xi|- 1)^{p-1}_+ }{|\xi|} \foo{I}_n + \frac{(p-1)(|\xi|-1)^{p-2}_+}{|\xi|^2}(\xi \otimes \xi) - \frac{(|\xi|-1)^{p-1}_+}{|\xi|^3}(\xi\otimes\xi) \bigg)
\end{align}
for any~$x\in\Omega$ and~$\xi\in\R^n$. At this point, it is straightforward to check that conditions~\eqref{fregularity} are indeed satisfied. In fact, in the scenario where~$p \geq 2$, the mapping~$ \xi \mapsto \F_p(x, \xi)$ is twice continuously differentiable across all of~$\mathbb{R}^n$ for any~$x\in\Omega$. The sub-quadratic case with~$1 < p < 2$ presents a different situation as~$\F_p$ is only twice continuously differentiable with respect to~$\xi$ inside and outside the unit sphere, i.e. on~$\mathbb{R}^n \setminus \{ \xi \in \mathbb{R}^n: |\xi| = 1 \}$. In order to treat the full parameter range~$p>1$ in a unified way, we set
\begin{align*}
    \Gamma_1(|\xi|) &\coloneqq \min \bigg\{ \frac{(|\xi|- 1)^{p-1} }{|\xi|}, (p-1)(|\xi|-1)^{p-2}  \bigg\} \bigchi_{ \{|\xi|>1\} } \\
    \Gamma_2(|\xi|) &\coloneqq \max \bigg\{ \frac{(|\xi|- 1)^{p-1} }{|\xi|}, (p-1)(|\xi|-1)^{p-2}  \bigg\} \bigchi_{ \{|\xi|>1\} }.
\end{align*}
Further, we infer from~\cite[Lemma~2.7]{bogelein2023higher} and~\eqref{prototypehessian}, that there holds the ellipticity estimate
\begin{equation} \label{prototypeallgemeineell}
    C_1 \Gamma_1(|\xi|) |\eta|^2 \leq \langle \nabla^2 \F_p(x,\xi) \eta,\eta \rangle \leq C_2 \Gamma_2(|\xi|) |\eta|^2
\end{equation}
for any~for any~$x\in\Omega$, and~$\xi, \eta\in\R^n$. For instance, when~$p\geq 2$, the preceding estimate becomes
\begin{equation} \label{prototypeexplizitell}
C_1 \frac{(|\xi|- 1)^{p-1}_+ }{|\xi|}|\eta|^2 \leq \langle \nabla^2\F_p(x,\xi)\eta,\eta\rangle  \leq C_2 (p-1) (|\xi|-1)^{p-2}_+ |\eta|^2.
\end{equation}
Now, given~$0<\delta<1$, consider~$\xi\in\R^n$, such that~$1+\delta\leq|\xi|\leq \delta^{-1}$. Estimate~\eqref{prototypeexplizitell} establishes that conditions~\eqref{voraussetzung} are satisfied with constants~$\lambda(\delta) = C_1 \delta^p$ and~$\Lambda(\delta)= C_2 (p-1) \big(\frac{1-\delta}{\delta}\big)^{p-2}$ in the super-quadratic case~$p\geq 2$. In particular, $\lambda$ vanishes in the limit~$\delta \downarrow 0$, whereas~$\Lambda$ becomes unbounded. Only in the linear case~$p=2$, the eigenvalues of the Hessian~$\nabla^2\F_p(x,\xi)$ are bounded from above universally on the whole of~$\Omega\times\R^n$ by~$\Lambda=C_2$, while the smallest eigenvalue still vanishes when~$\xi$ is close to the unit ball. In contrast, in the the sub-quadratic setting~$1<p<2$, a similar reasoning verifies that the reverse circumstance to the super-quadratic case~$p>2$ applies, and the smallest eigenvalue of~$\nabla^2\F_p(x,\xi)$ vanishes for large~$|\xi|$, whereas the largest eigenvalue becomes unbounded near the unit sphere~$\{\xi\in\R^n:|\xi|=1\}$. 


\subsection{Definition of weak solution} \label{sec:weaksolution}
        
At this point, we present our notion of a weak solution to equation~\eqref{pde}.


\begin{mydef} \label{defweakform}
   Let~$\F$ satisfy the structure conditions~\eqref{fregularity} and~\eqref{voraussetzung}. A measurable function~$u\in W^{1,\infty}_{\loc}(\Omega)$ is a local weak solution of equation~\eqref{pde} in~$\Omega$, if the integral identity
\begin{equation} \label{weakform}
    \int_{\Omega}  \langle \nabla\F(x,Du), D\phi \rangle \,\dx= - \int_{\Omega} f\phi\,\dx
\end{equation}
is satisfied for any test function~$\phi\in C^{\infty}_0(\Omega)$. 
\end{mydef}


\begin{remark} \label{lipschitzannahme} \upshape
   As already pointed out in~\cite[Remark~1.2]{colombo2017regularity}, the local Lipschitz assumption made on weak solutions in Definition~\ref{defweakform} can be justified by imposing standard growth assumptions on~$\F$. To state an example, the vector field incorporated in the prototype equation~\eqref{prototype} satisfies standard~$p$-growth conditions as long as~$|\xi|> 1$. Consequently, it can be shown that weak solutions are of class~$W^{1,\infty}_{\loc}(\Omega)$, where we refer the reader to~\cite{brasco2011global,brascocongested} in the scalar case and to~\cite{clop2020very} in the vectorial case. Due to the very general form of~$\F$ and the structure conditions~\eqref{voraussetzung}, without additional information on the growth rate of~$\xi \mapsto \nabla\F(x,\xi)$, such as~$p$-growth, we cannot expect quantitative energy estimates for weak solutions to~\eqref{pde}. Already to ensure convergence of the integral involving the solution~$u$ in \eqref{weakform}, further knowledge about the qualitative behavior of~$\F$ for large gradient values would be necessary. Consequently, we focus on weak solutions that are \textit{a priori} locally Lipschitz continuous in~$\Omega$. 
\end{remark}


\subsection{Main result}

The following theorem presents the main result of this paper. 


\begin{mytheorem} \label{hauptresultat}
Let~$n\geq 2$,~$f\in L^{n+\sigma}(\Omega)$ for some~$\sigma>0$, and~$E\subset\R^n$ denote the bounded and convex set with~$0\in\inn{E}$ on which~$\F$ degenerates. Further, let~$u\in W^{1,\infty}_{\loc}(\Omega)$ be a local weak solution to~\eqref{pde} in~$\Omega$ under the structure conditions~\eqref{fregularity} and~\eqref{voraussetzung}. Then, there holds
\begin{equation*}
    \mathcal{K}(Du)\in C^0(\Omega)
\end{equation*}
for any~$\mathcal{K}\in C^0(\R^n)$ with~$\mathcal{K}\equiv 0$ on~$E$.
\end{mytheorem}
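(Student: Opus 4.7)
The plan is to prove Theorem~\ref{hauptresultat} by combining a regularization scheme with a De Giorgi-type alternative argument, in the spirit of \cite{bogelein2023higher,mons2023higher}. First, I would replace $\mathcal{F}$ by a family of smooth and uniformly convex integrands $\mathcal{F}_\varepsilon$ (for instance obtained by mollification together with the addition of $\varepsilon(1+|\xi|^2)^{q/2}$), chosen so that $\nabla \mathcal{F}_\varepsilon \to \nabla \mathcal{F}$ locally uniformly outside of $E$ and so that the associated Dirichlet problems on a fixed subdomain admit smooth solutions $u_\varepsilon$ agreeing with $u$ on the boundary. By classical theory these $u_\varepsilon$ are of class $C^{2,\alpha}$ and, by a convexity comparison, inherit a uniform local Lipschitz bound from $u$; this yields $u_\varepsilon \to u$ locally uniformly with $Du_\varepsilon \to Du$ almost everywhere.

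The heart of the proof is then to produce a modulus of continuity for $\mathcal{K}(Du_\varepsilon)$ on compact subsets of $\Omega$ that is independent of $\varepsilon$. For this I would work on an arbitrary ball $B_r(x_0) \Subset \Omega$, fix a small threshold $\delta > 0$, and distinguish two complementary scenarios. In the \emph{degenerate alternative} the superlevel set $\{x \in B_r : |Du_\varepsilon(x)|_E > 1+\delta\}$ has small relative measure; a De Giorgi iteration on the levels $\{|Du_\varepsilon|_E > k\}$ for $k \geq 1+\delta$, driven by a Caccioppoli-type inequality derived by differentiating the regularized equation and testing suitably, then forces $|Du_\varepsilon|_E \leq 1 + 2\delta$ on $B_{r/2}$; since $\mathcal{K}$ is uniformly continuous on the compact set $\{|\xi|_E \leq 1+2\delta\}$ and vanishes on $E$, this implies $\mathrm{osc}_{B_{r/2}} \mathcal{K}(Du_\varepsilon) \leq \omega_{\mathcal{K}}(2\delta)$. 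In the \emph{non-degenerate alternative} $|Du_\varepsilon|_E$ stays bounded away from $1$ on a set of substantial measure, so \eqref{voraussetzung} makes the differentiated equation uniformly elliptic on the relevant gradient range. A Campanato-type comparison with a frozen-coefficient problem together with classical DiBenedetto-Friedman-Uhlenbeck / Lieberman oscillation-decay arguments then yields a quantitative reduction of the form $\mathrm{osc}_{B_{\tau r}} Du_\varepsilon \leq \gamma\, \mathrm{osc}_{B_r} Du_\varepsilon + C r^{\alpha}$ with $\gamma \in (0,1)$, the datum contribution being controlled by the $L^{n+\sigma}$-integrability of $f$.

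Iterating the two alternatives along a dyadic sequence of radii produces a common modulus of continuity $\omega$ for $\mathcal{K}(Du_\varepsilon)$ on compact subsets, independent of $\varepsilon$. By Arzel\`a-Ascoli, $\mathcal{K}(Du_\varepsilon) \to \mathcal{K}(Du)$ locally uniformly along a subsequence, and the limit must coincide with $\mathcal{K}(Du)$ by the a.e.\ convergence of $Du_\varepsilon$, so $\mathcal{K}(Du) \in C^0(\Omega)$ follows. The main obstacle I anticipate is the quantitative execution of the non-degenerate alternative in a way that is stable as $\varepsilon \to 0$: one has to carefully calibrate $\delta$ against $r$ and $\|f\|_{L^{n+\sigma}}$ so that neither the blow-up of the ratio $\Lambda(\delta)/\lambda(\delta)$ in \eqref{voraussetzung} nor the lower-order terms generated by the $x$-dependence of $\mathcal{F}$ (controlled by the Lipschitz bound on $\partial_x\nabla\mathcal{F}$ in \eqref{fregularity}) destroy the decay factor $\gamma$. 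This calibration, together with arranging the passage to the limit so that the dichotomy survives weak gradient convergence, is where the bulk of the technical work is expected to lie.
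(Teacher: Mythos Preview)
The overall architecture you propose---regularize, run a dichotomy at each scale, pass to the limit via Arzel\`a--Ascoli---is essentially the paper's. But the dichotomy as you formulate it has a genuine gap that the paper resolves by a different choice of measure condition.

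In your non-degenerate alternative you assume that $\{|Du_\varepsilon|_E>1+\delta\}$ has substantial measure in $B_r$ and conclude that \eqref{voraussetzung} renders the differentiated equation uniformly elliptic. It does not: \eqref{voraussetzung} controls $\nabla^2\mathcal F$ only pointwise where $|\xi|_E\ge 1+\delta$, and nothing in your hypothesis prevents $|Du_\varepsilon|_E$ from dropping below $1+\delta$ (or into $E$) on the complementary set. The linear oscillation-decay machinery you invoke requires ellipticity on the whole sub-ball, not on a set of positive measure. The paper closes this gap by placing the measure condition not at the fixed level $1+\delta$ but near the current supremum: with $\esssup_{B_{2\rho}}|Du_\varepsilon|_E\le 1+\delta+\mu$, the non-degenerate hypothesis is that for some $e^*\in\partial E^*$ the set $\{\partial_{e^*}u_\varepsilon-(1+\delta)>(1-\nu)\mu\}$ is large. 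A separate De Giorgi iteration (Proposition~\ref{lowerboundprop}) then upgrades this to the pointwise bound $|Du_\varepsilon|_E\ge 1+\delta+\mu/4$ on $B_{\rho/2}$, which is what actually yields uniform ellipticity.

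Correspondingly, the paper's degenerate alternative does not aim at a terminal bound $|Du_\varepsilon|_E\le 1+2\delta$ but at a multiplicative reduction $(|Du_\varepsilon|_E-(1+\delta))_+\le\kappa\mu$ with $\kappa\in[2^{-\tilde\beta/2},1)$; this is what makes the iteration along nested balls produce H\"older continuity of $\mathcal G_\delta(Du_\varepsilon)$ with exponent and constant depending on $\delta$ but not on $\varepsilon$. The limit $\varepsilon\downarrow 0$ is taken first (via the $L^2$ convergence $\mathcal G_\delta(Du_\varepsilon)\to\mathcal G_\delta(Du)$ of Lemma~\ref{approxkvgzinl2}), and only afterwards does one let $\delta\downarrow 0$, accepting the loss of the H\"older exponent but retaining uniform continuity of $\mathcal G(Du)$; the passage to general $\mathcal K$ is then a short pointwise argument. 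Your alternative plan of calibrating $\delta$ against $r$ so as to obtain an $\varepsilon$- and $\delta$-free modulus in one stroke would require quantitative control on the blow-up of $\Lambda(\delta)/\lambda(\delta)$ that is not part of the structure conditions.
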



As an immediate consequence of Theorem~\ref{hauptresultat}, we obtain the following corollary for the composition of the vector field~$\nabla \F$ with the gradient~$Du$.


\begin{mycor} \label{corollaryzwei}
    Let the assumptions of Theorem~\ref{hauptresultat} hold true. If there additionally holds
    $$\nabla\F \in C^0(\Omega\times\R^n,\R^n),$$
    then we have
    \begin{equation*}
        \nabla \F(x,Du) \in C^0(\Omega,\R^n).
    \end{equation*}
\end{mycor}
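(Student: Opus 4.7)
The plan is to reduce the corollary to Theorem~\ref{hauptresultat} by fixing a point $x_0\in\Omega$ and writing, for $x$ near $x_0$,
\begin{equation*}
\nabla\F(x,Du(x)) - \nabla\F(x_0,Du(x_0)) = \underbrace{\bigl[\nabla\F(x,Du(x)) - \nabla\F(x_0,Du(x))\bigr]}_{(\mathrm{I})} + \underbrace{\bigl[\nabla\F(x_0,Du(x)) - \nabla\F(x_0,Du(x_0))\bigr]}_{(\mathrm{II})}.
\end{equation*}
The preliminary observation is that $\nabla\F(x,\cdot)$ vanishes on all of $E$ for every $x\in\Omega$: indeed, $\F(x,\cdot)\equiv 0$ on the convex set $E$ which has nonempty interior (since $0\in\inn E$), so $\nabla\F(x,\xi)=0$ for $\xi\in\inn E$, and by the $C^1$-regularity in $\xi$ from \eqref{fregularity} this extends to $\overline{E}$.

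For term (II), I would apply Theorem~\ref{hauptresultat} componentwise with the choice $\mathcal{K}_i(\xi):=\partial_{\xi_i}\F(x_0,\xi)$ for $i=1,\dots,n$. By \eqref{fregularity} each $\mathcal{K}_i$ is continuous on $\R^n$, and by the preliminary observation it vanishes on $E$, so the hypotheses of Theorem~\ref{hauptresultat} are met. It follows that $x\mapsto \nabla\F(x_0,Du(x))$ admits a continuous representative on $\Omega$, and in particular (II)$\to 0$ as $x\to x_0$.

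For term (I), I would use that $u\in W^{1,\infty}_{\loc}(\Omega)$, so there exists a neighborhood $U\ssubset\Omega$ of $x_0$ on which $\|Du\|_{L^\infty}\le M$ for some finite $M$. Hence $Du(x)\in\overline{B}_M(0)$ for a.e.~$x\in U$. By the additional hypothesis $\nabla\F\in C^0(\Omega\times\R^n,\R^n)$, the vector field $\nabla\F$ is uniformly continuous on the compact set $\overline{U}\times\overline{B}_M(0)$, so (I)$\to 0$ uniformly in the second argument as $x\to x_0$. Combining the two estimates yields continuity of $\nabla\F(x,Du(x))$ at $x_0$. Since $x_0$ was arbitrary, this gives $\nabla\F(\cdot,Du)\in C^0(\Omega,\R^n)$. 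I do not foresee a real obstacle here; the only subtlety is that, as in Theorem~\ref{hauptresultat}, the continuous representative of $Du$ (in the sense of the composition) is implicitly being used, which is precisely what Theorem~\ref{hauptresultat} guarantees for term (II) and allows for term (I) after passing to that representative.
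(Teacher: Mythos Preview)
Your proof is correct and is precisely the natural derivation the paper has in mind when it calls the corollary an ``immediate consequence'' of Theorem~\ref{hauptresultat} (the paper does not spell out a proof). The decomposition into (I) and (II), the componentwise application of Theorem~\ref{hauptresultat} with $\mathcal{K}_i=\partial_{\xi_i}\F(x_0,\cdot)$, and the uniform continuity argument for the $x$-dependence are exactly what is needed; the measure-theoretic caveat you flag is the only subtlety and you have handled it appropriately.
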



\subsection{Strategy of the proof} \label{strategy}

In this section, we outline the essential steps leading to our main Theorem~\ref{hauptresultat}. To begin, we introduce a truncation $\Tilde{\F}$ of the integrand~$\F$ for values of~$|\xi|$ exceeding a certain threshold~$L < \infty$. We then modify the truncation by adding a suitable convex function to obtain a convex integrand~$\hat{\mathcal{F}}$. The threshold~$L$ aligns with the essential supremum of~$|Du|$ within a compactly contained ball~$B_R = B_R(y_0) \Subset \Omega$. Due to the local Lipschitz regularity of~$u$ in~$\Omega$, we find that~$L = \|Du\|_{L^\infty(B_R)} < \infty$. According to this construction, any local weak solution~$u$ to~\eqref{pde} is also a local weak solution to the redefined equation
$$\divv \nabla\hat{\F}(x,Du) = f \qquad \mbox{in~$B_R$}.$$
Moreover, we enhance the diffusion term of equation~\eqref{pde} by incorporating the additive quantity~$\epsilon\Delta u$, with a small parameter~$\epsilon\in(0,1]$. This modification yields a more regular version of~\eqref{pde}, with the integrand denoted by~$\Tilde{\F}_\epsilon$, that is elliptic across all of~$\R^n$ with respect to~$\xi$ and avoids degeneracy within the set~$E$. Of course, the ellipticity constant depends on the parameter~$\epsilon$ in general. Furthermore, the obtained operator~$\xi \mapsto \Tilde{\F}_\epsilon(x,\xi)$ satisfies quadratic growth on the whole of~$\R^n$ for any~$x\in\Omega$. Subsequently, we consider the unique weak solution~$u_\epsilon \in u + W^{1,2}(B_R)$ to the weak form of the approximating equation on a ball~$B_R \Subset \Omega$, with the Dirichlet boundary condition involving the local weak solution~$u$ of equation~\eqref{pde}. The nondegeneracy of the regularized equation allows us to establish higher regularity of the form
$$u_\epsilon \in W^{1,\infty}_{\loc}(B_R) \cap W^{2,2}_{\loc}(B_R)$$
for the approximating solutions. Moreover, due to the quadratic growth of~$\xi \mapsto \Tilde{\F}_\epsilon(x,\xi)$ for any~$x \in \Omega$, we are able to acquire a quantitative local~$L^\infty$-gradient bound for~$Du_\epsilon$, with the~$L^\infty$-gradient norm essentially bounded by the~$L^2$-gradient norm up to constants and an additive quantity. This expedient matter of fact further enables us to derive a local uniform bound for~$\|Du_\epsilon\|_{L^\infty}$ with respect to the approximating parameter~$\epsilon\in(0,1]$ in~$B_R$. The main difficulty hereinafter lies in proving Theorem~\ref{holdermainresult}, which establishes the local Hölder continuity in~$B_R$ of the function
\begin{equation*}
    \G_\delta(Du_\epsilon)\coloneqq \frac{(|Du_\epsilon|_E-(1+\delta))_+}{|Du_\epsilon|_E} Du_\epsilon
\end{equation*}
 for any~$\delta,\epsilon \in(0,1]$. Here, the Hölder exponent~$\alpha_\delta\in(0,1)$ and the positive Hölder constant~$C_\delta$ both depend on the data as well as on the parameter~$\delta\in(0,1]$, but are independent of the parameter~$\epsilon\in(0,1]$. Despite the general structure of the function~$\xi \mapsto\nabla\F(x,\xi)$ not being of Uhlenbeck-type in our setting, it still turns out advantageous to consider~$\G_\delta(Du_\epsilon)$. The mapping~$\G_\delta$ vanishes within a slightly larger set compared to~$\xi\mapsto \nabla\F(x,\xi)$, and is strictly positive outside the complement~$\{\xi\in\R^n: |\xi|_E>1+\delta\}$. As previously highlighted in~\cite{bogelein2023higher}, it turns out expedient to consider the function~$\G_\delta(Du_\epsilon)$ since in the set of points where~$\G_\delta$ is positive, the equation~\eqref{pde} exhibits an ellipticity property, with an ellipticity constant that depends on the parameter~$\delta\in(0,1]$. The proof of Theorem~\ref{holdermainresult} involves a careful treatment of both the non-degenerate and the degenerate regime. In the non-degenerate regime, the set of points where~$\partial_{e^*}u_\epsilon$ for at least one~$e^{*}\in\partial E^{*}$ is close to its supremum is large in measure, while in the degenerate regime, the set of points where~$\partial_{e^{*}} u_\epsilon$ is far from its supremum is large in measure for any~$e^{*}\in\partial E^{*}$. Here,~$E^*$ denotes the unit ball with respect to the induced dual mapping of the convex set~$E\subset \R^n$, where the reader is for the moment referred to Section~\ref{subsec:convex} for a more detailed explanation. In the non-degenerate regime, our goal is to establish a lower bound for
 $$|Du_\epsilon|_E = \sup\limits_{e^*\in\partial E^*} \partial_{e^*} u_\epsilon$$
 within a ball~$B_{\frac{\rho}{2}}(x_0) \Subset B_{2\rho}(x_0) \Subset B_R \Subset\Omega$. We achieve this result by leveraging the measure-theoretic information that characterizes the non-degenerate regime and is assumed to hold true on~$B_\rho(x_0)$. By differentiating the regularized equation, we find that for any~$e^*\in\partial E^*$, the function~$\partial_{e^*} u_\epsilon$ is a weak solution to a linear elliptic equation. Notably, the ellipticity constant for this equation is independent of~$\epsilon \in (0,1]$ and depends solely on the parameter~$\delta \in (0,1]$ as well as the given data, a consequence of the established lower bound for~$|Du_\epsilon|_E$. This fact enables us to utilize established excess-decay estimates for linear elliptic equations that are taken from \cite{gilbargtrudinger}. In the degenerate regime, a sub-solution to a linear elliptic equation is constructed that is given by
$$v_\epsilon\coloneqq (\partial_{e^*}u_\epsilon-(1+\delta))^2_+,$$
 facilitating the derivation of a De Giorgi class-type estimate. This results in a reduction of the supremum of~$\G_\delta(Du_\epsilon)$ on a smaller ball, as outlined in Proposition~\ref{degenerateproposition}. By combining the results of Propositions~\ref{nondegenerateproposition} and~\ref{degenerateproposition} through considering a sequence of shrinking nested balls, one navigates between the non-degenerate and degenerate regimes, adapting the approach based on the measure-theoretic information at each step. The transition between regimes is pivotal, as it determines the course of the analysis in subsequent steps. If the non-degenerate regime applies at any given step, it will be maintained throughout. However, transitioning to a smaller cylinder in the setting of the degenerate regime results in uncertainty regarding whether in the next step the non-degenerate or degenerate regime applies. Once we have established the Hölder continuity of~$\G_\delta(Du_\epsilon)$, an application of Arzelà-Ascoli's theorem and passing to the limit~$\epsilon\downarrow 0$ show that~$\G_\delta(Du)$ is also locally Hölder continuous in~$B_R$, with the Hölder exponent and constant depending on the parameter~$\delta$. By further taking the limit~$\delta\downarrow 0$, it follows that~$\G(Du)$ is locally uniformly continuous in~$B_R$. However, at this stage, the quantitative control over the Hölder exponent and constant is lost, leaving uncertainty about whether the limit function~$\G(Du)$ is Hölder continuous or what the optimal modulus of continuity may be. Eventually, the main regularity theorem~\ref{hauptresultat} can be deduced from these results.


\subsection{Plan of the paper} 
The paper is organized as follows: in Section~\ref{sec:preliminaries}, we will commence by presenting the notation and framework, which includes supplementary material required later on and our notion of a weak solution to equation~\eqref{pde}. Subsequently, in Section~\ref{sec:holder} we aim to give the proof of the main regularity Theorem~\ref{hauptresultat}. Sections~\ref{sec:nondegenerate} and~\ref{sec:degenerate} include the proof of Propositions~\ref{nondegenerateproposition} and~\ref{degenerateproposition}, essential components for proving an intermediate regularity result that is Theorem~\ref{holdermainresult}, concluding the article. \,\\


\textbf{Acknowledgements.}
The author would like to express gratitude to Professor Verena Bögelein for her guidance throughout the development of this article. \, \\
This research was funded in whole or in part by the Austrian Science Fund (FWF) [10.55776/P36295]. For open access purposes, the author has applied a CC BY public copyright license to any author accepted manuscript version arising from this submission. \,\\

\textbf{Conflict of Interest.} The author declares that there is no conflict of interest. \,\\

\textbf{Data availability.} This manuscript has no associated data.


\section{Preliminaries} \label{sec:preliminaries}
\subsection{Notation and setting}
Throughout this paper,~$\Omega\subset\R^n$ denotes a bounded domain. Its boundary will be denoted in the usual way by~$\partial\Omega \subset \R^{n-1}$. For the open ball in~$\R^n$ of radius~$\rho>0$ and center~$x_0\in\R^n$, we shall always write~$B_\rho(x_0)\subset\R^n$. For any set~$A\subset\R^n$ with~$|A|>0$ and any function~$f\in L^1(A)$, the mean value of~$f$ on~$A$ is given by
\begin{equation*}
    (f)_{A} \coloneqq \fint_{A} f(x)\,\dx = \frac{1}{|A|}\int_{A} f(x)\,\dx,
\end{equation*}
where~$|A|$ denotes the~$n$-dimensional Lebesgue measure of~$A$, i.e.~$|A|=\mathcal{L}^n(A)$, a notation that will be utilized throughout the entire article. In particular, if the considered set~$A$ is given by a ball~$B_\rho(x_0)$ with some radius~$\rho>0$ and center~$x_0\in\R^n$, we shall write
\begin{equation*}
    (f)_{x_0,\rho} \coloneqq \fint_{B_{\rho}(x_0)} f(x)\,\dx.
\end{equation*}
If it is convenient, we will omit the center~$x_0$ for brevity and simply denote the ball as~$B_\rho$ and the mean value of~$f$ on~$B_\rho$ as~$(f)_\rho$. The standard scalar product on~$\R^n$ will be denoted by~$\langle \cdot,\cdot\rangle$ for any~$\xi,\eta\in\R^n$, and the induced euclidean norm on~$\R^n$ by~$|\cdot|$. The dyadic product of two vectors~$\xi,\eta \in\R^n$ is indicated by~$\xi \otimes \eta$. The positive part of a real quantity~$a\in\R$ is denoted as~$a_+ = \max\{a,0\}$, while the negative part is denoted as~$a_- = \max\{-a,0\}$. \,\\

The expressions~$\nabla \F$ and~$\nabla^2\F$ represent the first and second order derivative of the function~$\F(x,\xi)$ with respect to~$\xi$ on~$\R^n\setminus E$, whereas~$\partial_x \nabla\F(x,\xi)$ denotes the derivative of~$\nabla\F$ with respect to the spatial variable~$x$. Instead of~$\nabla\F$, we shall sometimes write~$\h$ and both notations~$\nabla \F$ and~$\h$ will be used interchangeably. Directional derivatives of functions~$u\colon\Omega\to\R$ in direction~$v\in\R^n$ shall be indicated either by~$D_v u$ or by~$\partial_v u$ whenever the respective quantity exists either in the strong or the weak sense. \,\\

The following set of functions will appear multiple times throughout the article. We define 
\begin{equation} \label{gfunk}
    \G(\xi) \coloneqq \frac{(|\xi|_E - 1)_+ }{|\xi|_E}\xi \qquad\text{for~$\xi\in\R^n$},
\end{equation}
 which has already been considered in a similar fashion in the previous articles~\cite{bogelein2023higher,colombo2017regularity,santambrogio2010continuity}. Since the technique of approximation forms a major part of this paper, we will introduce a translated version of~$\G$. For a parameter~$\delta\geq 0$, we set
\begin{equation} \label{gdeltafunk}
    \G_\delta(\xi) \coloneqq \frac{(|\xi|_E -(1+\delta))_+ }{|\xi|_E} \xi \qquad\text{for~$\xi\in\R^n$}.
\end{equation}
Obviously this notion implies~$\G_0=\G$. We further remark that the function~$\G_\delta$ naturally arises in the context of the prototype equation~\eqref{prototype} and takes an important role in~\cite{bogelein2023higher}, where it simplifies to
$$\frac{(|\xi| -(1+\delta))_+ }{|\xi|} \xi.$$
Even though our setting overall differs from the one in~\cite{bogelein2023higher}, the function~$\G_\delta$ still turns out expedient in our investigations. The function~$\G_\delta$ will be employed in an approximation procedure to work around the degeneracy set~$E\subset\R^n$ of~$\F$. However, as~$\F$ is only elliptic outside of~$E$, further approximations are required in order to obtain an elliptic equation on the whole of~$\R^n$. For an approximating parameter~$\epsilon\in[0,1]$, we define
\begin{equation} \label{Fapprox}
     \F_\epsilon(x,\xi)\coloneqq \F(x,\xi)+\epsilon \textstyle{\frac{1}{2}} |\xi|^2\qquad\text{for any~$x\in\Omega$,~$\xi\in\R^n$}
\end{equation}
and note that~$\F_\epsilon$ is indeed elliptic on the whole of~$\R^n$ in the case where~$\epsilon\in(0,1]$, where the ellipticity constant additionally depends on the parameter~$\epsilon$. Note that the quantity~$\frac{1}{2}|\xi|^2$ denotes the convex function of the~$p$-energy in the case where~$p=2$. Thus, in the general divergence form~\eqref{pde} of the equation, this approximation procedure represents an addition of the well-known Laplace operator~$\Delta$ with some scaling parameter~$\epsilon\in[0,1]$. Consequently, we let
\begin{equation} \label{Gapprox}
    \h_\epsilon(x,\xi)\coloneqq \nabla \F_\epsilon(x,\xi) = \big(\h(x,\xi)+\epsilon\xi\big)\in\R^n\qquad\text{for any~$x\in\Omega$,~$\xi\in\R^n$}
\end{equation}
and
\begin{equation*} 
    \nabla^2\F_\epsilon(x,\xi)=\big(\nabla^2\F(x,\xi)+\epsilon I_n \big) \in \R^{n\times n}\qquad\text{for any~$x\in\Omega$,~$\xi\in\R^n\setminus\overline{E}$}.
\end{equation*}
Finally, we define the following bilinear forms on~$\R^n$ that naturally arise from a differentiation of the weak form~\eqref{weakform} of equation~\eqref{pde}, where for~$\eta,\zeta\in\R^n$ we set
 \begin{equation} \label{bilinear}
            \mathcal{B}(\cdot,\xi)(\eta,\zeta)\coloneqq \langle \nabla^2\F(\cdot,\xi)\eta, \zeta \rangle \qquad\text{for~$x\in\Omega,\xi\in\R^n\setminus\overline{E}$}.
        \end{equation}
      Moreover, the bilinear form naturally arising from the approximation~\eqref{Fapprox} is given by
 \begin{equation} \label{bilinearapprox}
            \mathcal{B}_\epsilon(\cdot,\xi)(\eta,\zeta)\coloneqq  \langle \nabla^2\F(\cdot,\xi)\eta,\zeta \rangle +\epsilon \langle \eta, \zeta\rangle  \qquad\text{for~$x\in\Omega,\xi\in\R^n\setminus\overline{E}$}.
        \end{equation}


\subsection{On convex sets and the Minkowski functional} \label{subsec:convex}

As introduced before, throughout this article we consider a bounded and convex set~$E\subset\R^n$ with~$0\in\inn{E}$ as the set of degeneracy of equation~\eqref{pde}. The convexity of~$E$ is required as it induces a mapping on~$\R^n$, which will be particularly important for the derivation of appropriate energy estimates in the course of Sections~\ref{sec:holder} -- \ref{sec:degenerate}. We consider th~\textit{Minkowski functional}~$|\cdot|_E$ on~$\R^n$, defined by
\begin{align*} 
   |\cdot|_E \colon\R^n \to\R_{\geq 0},\qquad |\xi|_E \coloneqq \inf\{ t>0:\xi\in tE  \},
\end{align*}
where~$tE\coloneqq\{tx:x\in E\}$. Moreover, the convexity allows the derivation of a triangle inequality and a generalized reverse triangle inequality for the Minkowski functional, as performed in Lemma~\ref{lem:minkowskitriangle}. In addition to the convexity, the property that~$0\in\inn{E}$ will turn out essential for the proof of Lemma~\ref{gdeltalem}, as it guarantees the Lipschitz regularity of the Minkowski functional~\eqref{Minkowski}. However, note that~$|\cdot|_E$ is not symmetric unless the set~$E$ is symmetric with respect to the origin~$0$. Concerning some properties of the~\textit{Minkowski functional}, we refer the reader to~\cite[Page 109]{minkowski} resp.~\cite[Page 119]{minkowski} for some properties of the latter.

Since~$E$ consists of points which satisfy~$|\xi|_E \leq 1$, i.e.~$E$ corresponds to the unit ball with respect to the Minkowski functional, we also introduce its dual mapping
\begin{align} \label{dualnorm}
    |\xi|_{E'}\coloneqq \sup\limits_{|e|_E \leq 1} \langle\xi,e \rangle = \sup\limits_{e \in E} \langle\xi,e \rangle
\end{align}
as well as the unit ball with respect to the dual mapping
\begin{align} \label{unitballdualnorm}
    E^*\coloneqq \overline{B}_1^{|\cdot|_{E'}} = \{\xi\in\R^n:|\xi|_{E'}\leq 1\} = \{\xi\in\R^n:\langle\xi,e\rangle \leq 1~\forall e\in E\}.
\end{align}
As outlined in~\cite[Section~3]{colombo2017regularity}, this yields the subsequent more appropriate representation formula of the Minkowski functional
\begin{align} \label{minkowskialternativ}
    |\xi|_E = \underset{\substack{\vspace{-0.05cm}\\ e*\in E^{*}}}{\sup} 
\langle\xi,e^*\rangle = \underset{\substack{\vspace{-0.05cm}\\ e*\in \partial E^{*}}}{\sup} 
\langle\xi,e^*\rangle
\end{align}
for any~$\xi\in\R^n$. Of particular importance for our reasoning is the following consequence of~\eqref{minkowskialternativ}: in several energy estimates we shall weakly differentiate a given equation in direction~$e^*$, where~$e^*\in \partial E^*$. This way, we will deduce information about the directional derivatives~$\partial_{e^*} u = \langle Du,e^*\rangle$. Eventually, as~$e^*\in E^*$ can be chosen arbitrarily, we pass to the supremum over all~$e^*\in E^*$ to obtain estimates for the Minkowski functional~$|Du|_E$ of the (weak) gradient of~$u$. We hereby identify the (weak) directional derivative in any direction~$v\in\R^n$ by~
\begin{align} \label{representation}
    v=\sum\limits_{i=1}^n v_i e_i,
\end{align} 
where~$(e_i)_{i=1,\ldots,n}$ denotes the standard base of~$\R^n$ and~$v_i$ represent the coefficients of~$v$ with respect to the standard base. Then, for a.e.~$v\in\R^n$ we have~$\partial_v u = \langle Du,v \rangle$.
Finally, straight from the definition it is clear that~$|\cdot|_E$ is homogeneous for any~$\lambda>0$ in the sense that~$|\lambda\xi|_E=\lambda|\xi|_E$ holds for any~$\xi\in\R^n$. Moreover, we have that~$|0|_E=0$ and, since~$0\in\inn{E}$, also that~$|\xi|_E<\infty$ for all~$\xi\in\R^n$. Due to~$0\in\inn{E}$ and~$E$ being bounded, we find radii~$0<r_E\leq R_E<\infty$ and balls~$B_{r_E}(0)\subset B_{R_E}(0)$, such that
\begin{align} \label{mengeeradii}
    B_{r_E}(0)\subset E\subset B_{R_E}(0),
\end{align}
where~$r_E>0$ is chosen as the largest of all radii~$r>0$ with~$B_r(0)\subset E$, while~$R_E>0$ denotes an arbitrarily chosen fixed radius with the stated property. It now follows easily from the definition of~\eqref{Minkowski} that there holds 
\begin{align} \label{betragminkowski}
    \frac{|\xi|}{R_E} \leq |\xi|_E \leq \frac{|\xi|}{r_E}
\end{align}
for any~$\xi\in\R^n$. This estimate is crucial, since it allows a comparison between the standard Euclidean norm~$|\cdot|$ and the Minkowski functional~$|\cdot|_E$ on~$\R^n$. Finally, for any~$\delta>0$ we define the outer parallel set of~$E$ with distance~$\delta>0$ by
\begin{align} \label{Edelta}
    E_\delta\coloneqq \{\xi\in\R^n:|\xi|_E\leq 1+\delta\}.
\end{align}


\subsection{Auxiliary material} \label{subsec:auxmat}
In this section, a series of algebraic inequalities and further additional material is summarized. \,\\


We recall a version of Kato's inequality.

\begin{mylem} \label{kato}
    Let~$k\in\N$ and~$B_R\subset\R^n$. For any~$u\in W^{2,1}_{\loc}(B_R,\R^k)$ there holds
    \begin{equation*}
        |D|Du||\leq |D^2 u|\quad\text{a.e. in $\Omega$}.
    \end{equation*}
\end{mylem}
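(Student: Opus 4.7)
The plan is to establish the inequality by a smooth regularization of the Euclidean norm combined with Cauchy--Schwarz, treating the set $\{|Du|=0\}$ separately. Since $u \in W^{2,1}_{\loc}(B_R,\R^k)$ we have $Du \in W^{1,1}_{\loc}(B_R,\R^{k\times n})$, and since $\xi \mapsto |\xi|$ is $1$-Lipschitz on $\R^{k\times n}$, the composition $|Du|$ lies in $W^{1,1}_{\loc}(B_R)$ by the standard chain rule for Sobolev functions composed with Lipschitz maps. This gives a well-defined object $D|Du|\in L^1_{\loc}(B_R,\R^n)$ which we must bound.

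To obtain the pointwise bound, for each $\epsilon>0$ I would regularize by $F_\epsilon(\xi) \coloneqq \sqrt{|\xi|^2 + \epsilon^2}$, which is smooth with $|D F_\epsilon(\xi)| \le 1$. The chain rule for Sobolev functions applied to the $C^1$-Lipschitz map $F_\epsilon$ yields $F_\epsilon(Du) \in W^{1,1}_{\loc}(B_R)$ together with the explicit formula
\begin{equation*}
    \partial_j F_\epsilon(Du) \;=\; \frac{1}{\sqrt{|Du|^2 + \epsilon^2}} \sum_{i,\alpha} \partial_i u^\alpha\, \partial_j \partial_i u^\alpha \qquad \text{a.e.\ in } B_R,
\end{equation*}
for each $j = 1,\ldots,n$. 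Cauchy--Schwarz in the $(i,\alpha)$-indices and the trivial bound $|Du|^2/(|Du|^2+\epsilon^2)\le 1$ then give
\begin{equation*}
    |\partial_j F_\epsilon(Du)|^2 \;\le\; \frac{|Du|^2}{|Du|^2+\epsilon^2}\sum_{i,\alpha}(\partial_j\partial_i u^\alpha)^2 \;\le\; \sum_{i,\alpha}(\partial_j\partial_i u^\alpha)^2,
\end{equation*}
and summation over $j=1,\ldots,n$ delivers $|DF_\epsilon(Du)| \le |D^2 u|$ a.e.\ in $B_R$.

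The final step is to pass to the limit $\epsilon \downarrow 0$. On $\{|Du|>0\}$, the explicit formula for $\partial_j F_\epsilon(Du)$ converges a.e.\ to $|Du|^{-1}\sum_{i,\alpha}\partial_i u^\alpha\,\partial_j\partial_i u^\alpha$, which by the chain rule coincides with $\partial_j|Du|$ on this set; hence $|D|Du|| \le |D^2 u|$ a.e.\ there. On the complementary set $\{|Du|=0\}$, the standard Stampacchia-type property for Sobolev functions ensures $D|Du| = 0$ a.e., so the inequality holds trivially. Combining the two cases yields the claim a.e.\ in $B_R$.

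The only real subtlety is justifying the behavior of $D|Du|$ at points where $Du$ vanishes: $|\cdot|$ fails to be differentiable at the origin, so one cannot blindly differentiate. Both the a.e.\ chain rule on $\{|Du|>0\}$ and the vanishing of $D|Du|$ on $\{|Du|=0\}$ are classical consequences of the chain rule for Sobolev functions composed with Lipschitz maps (see, e.g., Ziemer's \emph{Weakly Differentiable Functions}); the smooth approximation by $F_\epsilon$ is the cleanest way to package this and avoid delicate measure-theoretic arguments directly.
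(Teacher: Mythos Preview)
Your proof is correct and follows the standard route: regularize the norm, apply the chain rule and Cauchy--Schwarz, then pass to the limit while handling the zero set of $Du$ via the Stampacchia-type vanishing property. The paper itself provides no proof of this lemma; it is simply recalled as a known version of Kato's inequality, so there is nothing to compare against. Your argument is a clean, self-contained justification of the stated inequality.
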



The following lemma presents a triangle inequality and a generalized reverse triangle inequality for the Minkowski functional~$|\cdot|_E$.

\begin{mylem} \label{lem:minkowskitriangle}
    For any~$\xi,\eta\in\R^n$ there holds
    \begin{align} \label{est:minkowskitriangle}
        |\xi+\eta|_E\leq |\xi|_E+|\eta|_E.
    \end{align}
    In particular, this implies the following reverse triangle inequality
    \begin{align} \label{est:minkowskireversetriangle}
        ||\xi|_E-|\eta|_E| \leq \max\{|\xi-\eta|_E,|\eta-\xi|_E\}.
    \end{align}
\end{mylem}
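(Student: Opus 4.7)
The plan is to prove the triangle inequality directly from the definition of the Minkowski functional, exploiting the convexity of $E$, and then to derive the reverse form as a formal consequence.

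For the triangle inequality~\eqref{est:minkowskitriangle}, I would fix $\xi,\eta\in\R^n$ and write $s \coloneqq |\xi|_E$, $t \coloneqq |\eta|_E$. The case $s=0$ or $t=0$ is trivial: since $E$ is bounded and contained in $B_{R_E}(0)$, the estimate~\eqref{betragminkowski} forces $\xi=0$ (resp.\ $\eta=0$) in that case. Assuming $s,t>0$, for arbitrary $\epsilon>0$ the definition of the infimum furnishes elements $a,b\in E$ with
\begin{equation*}
    \xi = (s+\epsilon)a, \qquad \eta=(t+\epsilon)b.
\end{equation*}
Setting $\mu\coloneqq s+t+2\epsilon$, the decomposition
\begin{equation*}
    \frac{\xi+\eta}{\mu} = \frac{s+\epsilon}{\mu}\, a + \frac{t+\epsilon}{\mu}\, b
\end{equation*}
expresses $\mu^{-1}(\xi+\eta)$ as a convex combination of $a,b\in E$, since the two coefficients are nonnegative and sum to one. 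Convexity of $E$ therefore yields $\xi+\eta\in\mu E$, i.e.\ $|\xi+\eta|_E\leq s+t+2\epsilon$. Letting $\epsilon\downarrow 0$ gives~\eqref{est:minkowskitriangle}.

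For the generalized reverse triangle inequality~\eqref{est:minkowskireversetriangle}, I would simply apply~\eqref{est:minkowskitriangle} twice in the obvious way:
\begin{equation*}
    |\xi|_E = |(\xi-\eta)+\eta|_E \leq |\xi-\eta|_E+|\eta|_E,
    \qquad
    |\eta|_E = |(\eta-\xi)+\xi|_E \leq |\eta-\xi|_E+|\xi|_E,
\end{equation*}
so that $|\xi|_E-|\eta|_E\leq |\xi-\eta|_E$ and $|\eta|_E-|\xi|_E\leq |\eta-\xi|_E$. Taking the maximum on the right-hand side in both cases yields the claim.

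I do not anticipate any real obstacle; the only point worth a brief mental check is the asymmetry of $|\cdot|_E$: because $E$ is not assumed symmetric about the origin, $|\xi-\eta|_E$ and $|\eta-\xi|_E$ may differ, which is exactly why the maximum appears on the right-hand side of~\eqref{est:minkowskireversetriangle}. The boundedness of $E$ together with $0\in\inn E$ is needed only to handle the degenerate case $|\xi|_E=0$ in the triangle inequality; the core argument is the one-line convex combination above.
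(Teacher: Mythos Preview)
Your proof is correct and follows the same approach as the paper, which merely asserts that the triangle inequality ``quickly follows from the definition of the Minkowski functional~\eqref{Minkowski} and the fact that~$E\subset\R^n$ is a bounded and convex set'' and that the reverse form is ``an immediate consequence''; you have simply written out these details explicitly.
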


\begin{remark} \label{remarkzutriangle} \upshape
   In general, the reverse triangle inequality established in~\eqref{est:minkowskireversetriangle} does not yield the standard reverse triangle inequality that applies to the Euclidean norm, since the Minkowski functional~$|\cdot|_E$ is not necessarily symmetric.
\end{remark}

\begin{proof}
    The triangle inequality~\eqref{est:minkowskitriangle} quickly follows from the definition of the Minkowski functional~\eqref{Minkowski} and the fact that~$E\subset\R^n$ is a bounded and convex set. The generalized reverse triangle inequality~\eqref{est:minkowskireversetriangle} is an immediate consequence of the triangle inequality~\eqref{est:minkowskitriangle}.
    \end{proof}


The fact that~$0\in\inn{E}$ allows us to deduce the following Lipschitz estimate for~$|\cdot|_E$.

\begin{mylem} \label{lem:minkowskilipschitz}
For any~$\xi,\eta\in\R^n$ there holds the Lipschitz estimate
\begin{align} \label{est:lipschitz}
    ||\xi|_E - |\eta|_E | \leq \mbox{$\frac{1}{r_E}$} |\xi-\eta|,
\end{align}
where~$r_E>0$ is specified in~\eqref{mengeeradii}.
\end{mylem}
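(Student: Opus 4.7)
My plan is to combine two results already available in the excerpt: the generalized reverse triangle inequality from Lemma~\ref{lem:minkowskitriangle}, which gives
\begin{equation*}
    \bigl| |\xi|_E - |\eta|_E \bigr| \leq \max\{ |\xi-\eta|_E,\, |\eta-\xi|_E \},
\end{equation*}
and the two-sided comparison estimate~\eqref{betragminkowski} between the Euclidean norm and the Minkowski functional, which uses precisely the fact that $0\in\inn{E}$ through the inclusion $B_{r_E}(0)\subset E$ in~\eqref{mengeeradii}. The estimate~\eqref{betragminkowski} says $|\zeta|_E \leq |\zeta|/r_E$ for every $\zeta\in\R^n$.

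Applying this bound once to $\zeta=\xi-\eta$ and once to $\zeta=\eta-\xi$, and observing that the Euclidean norm is symmetric so that $|\xi-\eta|=|\eta-\xi|$, I obtain
\begin{equation*}
    \max\{|\xi-\eta|_E,\, |\eta-\xi|_E\} \leq \tfrac{1}{r_E}\,|\xi-\eta|.
\end{equation*}
Chaining this with the reverse triangle inequality yields~\eqref{est:lipschitz} directly.

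There is no real obstacle: the whole point of the asymmetry remark in Remark~\ref{remarkzutriangle} is already handled by taking the maximum of both one-sided differences, and the symmetry of $|\cdot|$ makes the two resulting upper bounds coincide. The key conceptual content is that the Lipschitz constant $1/r_E$ is sharp in the sense that it degenerates if the inscribed ball around the origin shrinks, which is consistent with the hypothesis $0\in\inn{E}$ being exactly what prevents the Minkowski functional from blowing up.
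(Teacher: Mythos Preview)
Your proof is correct and follows essentially the same approach as the paper: both combine the generalized reverse triangle inequality from Lemma~\ref{lem:minkowskitriangle} with the upper bound $|\zeta|_E \le |\zeta|/r_E$ coming from the inclusion $B_{r_E}(0)\subset E$. Your version is in fact slightly cleaner, since you invoke~\eqref{betragminkowski} directly rather than rederiving (a weaker form of) it as the paper does in its first paragraph.
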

\begin{proof}
According to the construction of the radius~$r_E$ in~\eqref{mengeeradii}, there holds~$B_{r_E}(0)\subset E$. Thus, for any~$\xi\in\R^n$ we have that
$$ \frac{r_E}{2} \frac{\xi}{|\xi|}\in B_{r_E}(0)\subset E.$$
Hence, the definition of the Minkowski functional yields~$|\xi|_E\leq \frac{2}{r_E}|\xi|$. By an application of Lemma~\ref{lem:minkowskitriangle} and by exploiting~\eqref{mengeeradii}, we infer the claimed Lipschitz estimate
\begin{align*}
    ||\xi|_E-|\eta|_E| \leq \max\{|\xi-\eta|_E,|\eta-\xi|_E\} \leq \mbox{$\frac{1}{r_E}$}|\xi-\eta|
\end{align*}
    for any~$\xi,\eta\in\R^n$. 
\end{proof}


\begin{mylem} \label{lem:algineq}
    For any~$\xi,\eta\in\R^n\setminus\{ 0\}$ there holds
    \begin{equation*}
        \bigg|\frac{\xi}{|\xi|_E}-\frac{\eta}{|\eta|_E}\bigg|_E \leq \frac{R_E}{r_E} \frac{2}{|\xi|_E} |\xi-\eta|_E.
    \end{equation*}
\end{mylem}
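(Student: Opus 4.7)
My plan is to prove the inequality via the standard algebraic decomposition
\[
\frac{\xi}{|\xi|_E} - \frac{\eta}{|\eta|_E}
= \frac{\xi - \eta}{|\xi|_E} + \frac{\eta \bigl(|\eta|_E - |\xi|_E\bigr)}{|\xi|_E\,|\eta|_E},
\]
which one checks by combining the two fractions on the right over the common denominator $|\xi|_E |\eta|_E$. This is the same trick used to control quotients of the form $\xi/\|\xi\|$ in a normed space, but here I must be careful because the Minkowski functional $|\cdot|_E$ is in general not symmetric.

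Applying the triangle inequality from Lemma~\ref{lem:minkowskitriangle} together with the positive homogeneity of $|\cdot|_E$, the decomposition immediately yields
\[
\bigl|\tfrac{\xi}{|\xi|_E} - \tfrac{\eta}{|\eta|_E}\bigr|_E
\le \frac{|\xi-\eta|_E}{|\xi|_E} + \frac{\bigl|\eta(|\eta|_E-|\xi|_E)\bigr|_E}{|\xi|_E\,|\eta|_E}.
\]
The first summand is already of the desired form. For the second, I will split into the two sign cases for $|\eta|_E - |\xi|_E$. When $|\eta|_E \ge |\xi|_E$, positive homogeneity gives $|\eta(|\eta|_E-|\xi|_E)|_E = (|\eta|_E-|\xi|_E)|\eta|_E$, and the term simplifies to $(|\eta|_E-|\xi|_E)/|\xi|_E$. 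When $|\eta|_E < |\xi|_E$, the scalar is negative and I must bound $|{-}\eta|_E$, for which \eqref{betragminkowski} gives $|{-}\eta|_E \le |\eta|/r_E \le (R_E/r_E)|\eta|_E$, producing the additional factor $R_E/r_E$. In both cases the second summand is controlled by $(R_E/r_E)\,||\xi|_E-|\eta|_E|/|\xi|_E$.

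The reverse triangle inequality \eqref{est:minkowskireversetriangle} together with the Lipschitz estimate of Lemma~\ref{lem:minkowskilipschitz} and \eqref{betragminkowski} give
\[
\bigl||\xi|_E - |\eta|_E\bigr| \le \max\{|\xi-\eta|_E,\,|\eta-\xi|_E\} \le \tfrac{1}{r_E}|\xi-\eta| \le \tfrac{R_E}{r_E}|\xi-\eta|_E,
\]
where the last inequality again uses \eqref{betragminkowski}. Combining everything leads to
\[
\bigl|\tfrac{\xi}{|\xi|_E} - \tfrac{\eta}{|\eta|_E}\bigr|_E
\le \frac{|\xi-\eta|_E}{|\xi|_E}\Bigl(1 + \tfrac{R_E}{r_E}\Bigr),
\]
and since $r_E \le R_E$ by construction, one has $1 + R_E/r_E \le 2R_E/r_E$, which is the claimed estimate.

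The only real obstacle is bookkeeping around the non-symmetry of $|\cdot|_E$: if one naively applies the triangle inequality without splitting cases, the Lipschitz comparison $|{-}v|_E \le (R_E/r_E)|v|_E$ must be invoked, and one must verify that the resulting constants combine into $2R_E/r_E$ rather than a larger multiple such as $(R_E/r_E)^2 + 1$. The use of $r_E \le R_E$ at the final step is what makes the clean constant work out.
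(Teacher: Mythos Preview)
Your approach is exactly the paper's: the same algebraic decomposition, triangle inequality, and reverse triangle/Lipschitz control. However, there is a genuine bookkeeping error in your constant tracking, precisely the pitfall you warn yourself about in the last paragraph.

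You first establish that in both sign cases the second summand is bounded by $\tfrac{R_E}{r_E}\,\dfrac{\bigl||\xi|_E-|\eta|_E\bigr|}{|\xi|_E}$, and then you separately bound $\bigl||\xi|_E-|\eta|_E\bigr|\le \tfrac{R_E}{r_E}|\xi-\eta|_E$. Plugging the second into the first gives $\bigl(\tfrac{R_E}{r_E}\bigr)^2\dfrac{|\xi-\eta|_E}{|\xi|_E}$, hence a total factor $1+(R_E/r_E)^2$, not the $1+R_E/r_E$ you claim. And $1+(R_E/r_E)^2\le 2R_E/r_E$ fails whenever $R_E\neq r_E$.

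The fix is not to merge the two cases into a single uniform estimate before bounding $\bigl||\xi|_E-|\eta|_E\bigr|$, but to carry each case through to the end:
\begin{itemize}
\item If $|\eta|_E\ge|\xi|_E$, the second summand equals $\dfrac{|\eta|_E-|\xi|_E}{|\xi|_E}$ with \emph{no} $R_E/r_E$ factor yet; now the triangle inequality gives $|\eta|_E-|\xi|_E\le|\eta-\xi|_E\le\tfrac{R_E}{r_E}|\xi-\eta|_E$, producing the single factor.
\item If $|\eta|_E<|\xi|_E$, the $R_E/r_E$ comes from $|{-}\eta|_E\le\tfrac{R_E}{r_E}|\eta|_E$ as you note, but then $|\xi|_E-|\eta|_E\le|\xi-\eta|_E$ directly from the triangle inequality, with \emph{no} extra factor.
\end{itemize}
Each case incurs exactly one $R_E/r_E$, from different sources, and the combined bound is indeed $1+R_E/r_E\le 2R_E/r_E$. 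This is effectively what the paper does (its chain passes through the Euclidean norm once via the Lipschitz estimate and once via \eqref{betragminkowski}), though the paper is terse about the sign issue you carefully identified.
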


\begin{proof}
   By applying the homogeneity of~$|\cdot|_E$,~\eqref{betragminkowski}, Lemma~\ref{lem:minkowskitriangle}, and also Lemma~\ref{lem:minkowskilipschitz}, we calculate 
    \begin{align*}
        | |\eta|_E\xi - |\xi|_E \eta |_E &= | |\eta|_E(\xi-\eta) + \eta(|\eta|_E-|\xi|_E) |_E \\
        &\leq |\eta|_E|\xi-\eta|_E + |\eta|_E |\eta|_E-|\xi|_E \\
        &\leq |\eta|_E|\xi-\eta|_E + |\eta|_E\mbox{$\frac{1}{r_E}$}|\eta-\xi| \\
        &\leq |\eta|_E|\xi-\eta|_E + |\eta|_E\mbox{$\frac{R_E}{r_E}$}|\xi-\eta|_E \\
        &\leq 2 \mbox{$\frac{R_E}{r_E}$} |\eta|_E|\xi-\eta|_E,
    \end{align*}
    which yields the claimed estimate after dividing both sides of the preceding inequality by~$|\xi|_E|\eta|_E \neq 0$ and by exploiting the homogeneity of~$|\cdot|_E$ once more. 
\end{proof}


The subsequent lemma states a relation between the mapping~$\G_\delta$ introduced in~\eqref{gdeltafunk} and the Euclidean norm, which follows the idea from~\cite[Lemma~2.3]{bogelein2023higher}. 

\begin{mylem} \label{gdeltalem}
    Let~$\delta\geq 0$ and~$\xi,\eta\in\R^n$. Then, there holds 
    \begin{equation} \label{est:gdeltalipschitzeins}
        |\G_\delta(\xi)-\G_\delta(\eta)| \leq  3 \Big(\frac{R_E}{r_E}\Big)^2 |\xi-\eta|.
    \end{equation}
    Additionally, if~$\delta>0$ and~$|\xi|_E \geq 1+\delta$, we have
    \begin{equation} \label{est:gdeltalipschitzzwei}
        |\xi-\eta| \leq 3 \Big(\frac{R_E}{r_E}\Big)^2\Big(1+\frac{1}{\delta}\Big) |\G(\xi)-\G(\eta)|.
    \end{equation}
    
\end{mylem}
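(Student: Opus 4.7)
The plan is to prove both estimates by a case analysis based on the position of $\xi$ and $\eta$ relative to $\{|\cdot|_E \leq 1+\delta\}$ in part~(i) and $\{|\cdot|_E \leq 1\}$ in part~(ii), combining the auxiliary estimates of Lemmas~\ref{lem:minkowskilipschitz} and~\ref{lem:algineq} with the norm comparison~\eqref{betragminkowski}.

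For~\eqref{est:gdeltalipschitzeins} the trivial case $|\xi|_E, |\eta|_E \leq 1+\delta$ makes both sides vanish. In the transition case $|\xi|_E > 1+\delta$, $|\eta|_E \leq 1+\delta$ (WLOG), I would reduce to
\begin{equation*}
|\G_\delta(\xi)| = \tfrac{|\xi|_E - (1+\delta)}{|\xi|_E}|\xi| \leq \big||\xi|_E - |\eta|_E\big|\cdot \tfrac{|\xi|}{|\xi|_E} \leq \tfrac{R_E}{r_E}|\xi - \eta|,
\end{equation*}
exploiting~\eqref{betragminkowski} and Lemma~\ref{lem:minkowskilipschitz}. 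In the main case $|\xi|_E, |\eta|_E > 1+\delta$, setting $\Phi(t) := 1 - (1+\delta)/t$, I would decompose
\begin{equation*}
\G_\delta(\xi) - \G_\delta(\eta) = \Phi(|\xi|_E)(\xi - \eta) + (1+\delta)\tfrac{|\xi|_E - |\eta|_E}{|\xi|_E|\eta|_E}\,\eta,
\end{equation*}
bounding the first term by $|\xi-\eta|$ (since $\Phi \leq 1$) and the second by $\tfrac{R_E}{r_E}|\xi-\eta|$ via $(1+\delta)/|\xi|_E \leq 1$, $|\eta|/|\eta|_E \leq R_E$, and Lemma~\ref{lem:minkowskilipschitz}. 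The resulting constant $1 + R_E/r_E$ is comfortably dominated by $3(R_E/r_E)^2$.

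For~\eqref{est:gdeltalipschitzzwei} the key observation is that on $\{|\cdot|_E > 1\}$ the map $\G$ admits the explicit inverse $\G^{-1}(w) = w + w/|w|_E$ with $|\G(\xi)|_E = |\xi|_E - 1$. In Case~I, $|\eta|_E > 1$: setting $w_1 := \G(\xi)$ and $w_2 := \G(\eta)$, one has $|w_1|_E \geq \delta$ and
\begin{equation*}
\xi - \eta = (w_1 - w_2) + \left(\tfrac{w_1}{|w_1|_E} - \tfrac{w_2}{|w_2|_E}\right).
\end{equation*}
Applying Lemma~\ref{lem:algineq} to the last parenthesis and converting norms via~\eqref{betragminkowski} yields a bound $\tfrac{2R_E^2}{r_E^2\delta}|w_1 - w_2|$ for it, giving the claim. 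In Case~II, $|\eta|_E \leq 1$, one has $\G(\eta) = 0$; the direct estimate $|\xi - \eta| \leq |\xi| + |\eta| \leq R_E(|\xi|_E + 1) \leq 2R_E|\xi|_E$ combined with the lower bound $|\G(\xi)| \geq r_E(|\xi|_E - 1)$ gives
\begin{equation*}
\tfrac{|\xi - \eta|}{|\G(\xi)|} \leq \tfrac{2R_E}{r_E}\cdot\tfrac{|\xi|_E}{|\xi|_E - 1} \leq \tfrac{2R_E}{r_E}(1 + 1/\delta),
\end{equation*}
where the final inequality uses the monotonicity of $t\mapsto t/(t-1)$ on $(1,\infty)$ together with $|\xi|_E \geq 1+\delta$. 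Both cases are absorbed into the target form $3(R_E/r_E)^2(1+1/\delta)$ by using $R_E/r_E \geq 1$.

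The main obstacle is the inverse estimate~\eqref{est:gdeltalipschitzzwei}: the blow-up $1 + 1/\delta$ as $\delta \downarrow 0$ is intrinsic and reflects the degeneracy of the radial derivative of $\G$ on the unit sphere $\{|\cdot|_E = 1\}$. Extracting it sharply requires the explicit inverse formula in Case~I and the radial lower bound on $|\G(\xi)|$ in Case~II; all remaining manipulations are routine once the correct case split is in place.
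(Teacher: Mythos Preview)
Your proposal is correct and follows essentially the same strategy as the paper: identical case splits for both estimates, the same use of the explicit inverse $\G^{-1}(w)=w+w/|w|_E$ in part~(ii), and the same reliance on the norm comparison~\eqref{betragminkowski} together with the Lipschitz bound of Lemma~\ref{lem:minkowskilipschitz}. The only cosmetic difference is in the case $|\xi|_E,|\eta|_E>1+\delta$ of~\eqref{est:gdeltalipschitzeins}: the paper passes to the Minkowski functional and invokes Lemma~\ref{lem:algineq} to control $\big|\xi/|\xi|_E-\eta/|\eta|_E\big|_E$, whereas your decomposition $\Phi(|\xi|_E)(\xi-\eta)+(1+\delta)\tfrac{|\xi|_E-|\eta|_E}{|\xi|_E|\eta|_E}\eta$ stays in Euclidean coordinates and uses only Lemma~\ref{lem:minkowskilipschitz}; both routes yield a constant dominated by $3(R_E/r_E)^2$.
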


\begin{proof}
We first establish assertion~\eqref{est:gdeltalipschitzeins} and, as in~\cite[Lemma~2.3]{bogelein2023higher}, distinguish between different cases. Firstly, if~$|\xi|_E,|\eta|_E\leq 1+\delta$, the inequality is obvious since this implies~$\G_\delta(\xi)=\G_\delta(\eta)=0$. If~$|\xi|_E>1+\delta$ and~$|\eta|_E\leq 1+\delta$, we exploit~\eqref{betragminkowski} and Lemma~\ref{lem:algineq} to obtain
\begin{align*}
    |\G_\delta(\xi)-\G_\delta(\eta)| &= \frac{|\xi|}{|\xi|_E} (|\xi|_E-(1+\delta)) \\
    &\leq R_E(|\xi|_E-(1+\delta)) \\
    &\leq R_E(|\xi|_E-|\eta|_E) \\
    &\leq \mbox{$\frac{R_E}{r_E}$} |\xi-\eta|.
\end{align*} 
The reverse case where~$|\xi|\leq 1+\delta$ and~$|\eta|_E>1+\delta$ follows in the same way. Finally, if~$|\xi|_E,|\eta|_E >1+\delta$, we employ the homogeneity of the Minkowski functional,~\eqref{betragminkowski}, and Lemma~\ref{lem:algineq}, which yields
\begin{align*}
    |\G_\delta(\xi)-\G_\delta(\eta)| &\leq R_E \bigg| \frac{(|\xi|_E-(1+\delta))\xi}{|\xi|_E} - \frac{(|\eta|_E-(1+\delta))\eta}{|\eta|_E} \bigg|_E \\
    &\leq |\xi-\eta|_E + (1+\delta) \bigg| \frac{\xi}{|\xi|_E} - \frac{\eta}{|\eta|_E} \bigg|_E \\
    &\leq |\xi-\eta|_E + 2\mbox{$\frac{R_E}{r_E}$} \\
    &\leq 3 \mbox{$\frac{R_E}{r_E}$} |\xi-\eta|_E \\
    &\leq 3 \big(\mbox{$\frac{R_E}{r_E}$}\big)^2 |\xi-\eta|.
\end{align*}
Next, we aim to verify the second assertion~\eqref{est:gdeltalipschitzzwei}. If~$|\eta|_E \leq 1$, we have~$\G(\eta)=0$ and, by proceeding similarly to before, achieve
\begin{align*}
    \frac{|\xi-\eta|}{|\G(\xi)-\G(\eta)|} = \frac{|\xi-\eta||\xi|_E}{(|\xi|_E-1)|\xi|} \leq \frac{1}{r_E} \frac{|\xi-\eta|}{|\xi|_E-1} \leq \frac{R_E}{r_E} \frac{|\xi|_E + |\eta|_E}{|\xi|_E-1} \leq  \frac{R_E}{r_E}\Big(1+\frac{2}{\delta} \Big) \leq 2 \frac{R_E}{r_E}\Big(1+\frac{1}{\delta} \Big). 
\end{align*}
The remaining case where~$|\eta|_E>1$ is treated as follows: due to the construction of~$\G$, this function is a one to one mapping from the set~$\R^n\setminus \overline{E} = \R^n\setminus\{x\in\R^n:|x|_E\leq 1\}$ to~$\R^n\setminus\{0\}$. Thus, its inverse mapping exists on~$\R^n\setminus\{0\}$ and, which is quickly verified, is given by~$\G^{-1}(y)=\frac{(|y|_E + 1)y}{|y|_E}$. Setting~$\tildexi\coloneqq \G(\xi)$ and~$\Tilde{\eta}\coloneqq \G(\eta)$, we infer through Lemma~\ref{lem:algineq} and~\eqref{mengeeradii} the following
\begin{align*}
    \frac{|\G^{-1}(\tildexi) - \G^{-1}(\Tilde{\eta}) |}{|\tildexi-\Tilde{\eta}|} &= \frac{\big|\tildexi-\Tilde{\eta} + \frac{\tildexi}{|\tildexi|_E}-\frac{\Tilde{\eta}}{|\Tilde{\eta}|_E} \big|}{|\tildexi-\Tilde{\eta}|} \\
    &\leq 1 + \frac{R_E}{|\tildexi-\Tilde{\eta}|} \bigg| \frac{\tildexi}{|\tildexi|_E} - \frac{\Tilde{\eta}}{|\Tilde{\eta}|_E} \bigg|_E \\
    &\leq 1 + \frac{R_E}{|\tildexi-\Tilde{\eta}|} \frac{2}{|\tildexi|_E} \frac{R_E}{r_E}|\tildexi-\Tilde{\eta}|_E \\
    &\leq 1+\Big(\frac{R_E}{r_E}\Big)^2\frac{2}{\delta} \\
    &\leq 3\Big(\frac{R_E}{r_E}\Big)^2 \Big(1+\frac{1}{\delta} \Big).
\end{align*}
\end{proof}


The following lemma represents one of the main preliminary tools and states a monotonicity estimate for~$\h_\epsilon$ that is an immediate consequence of the ellipticity assumption~\eqref{voraussetzung} for any~$\xi\in\R^n$ with~$1+\delta \leq |\xi|_E$ and~$x\in\Omega$.


\begin{mylem} \label{monotonicityapprox}
   Let~$\epsilon\in[0,1]$ and~$\delta>0$. Then there exists~$C=C(\delta)$, such that for any~$x\in\Omega$ and any~$\Tilde{\xi},\xi\in\R^n$ with~$|\xi|_E \geq 1+\delta $ there holds
   \begin{equation} \label{est:monotonicityapprox}
       \langle \h_\epsilon(x,\Tilde{\xi})-\h_\epsilon(x,\xi),\Tilde{\xi}-\xi\rangle \geq \bigg(\epsilon +  C(\delta)\frac{r_E(2|\xi|_E -(2+\delta))}{2|\xi|_E(R_E+r_E)} \bigg)|\Tilde{\xi}-\xi|^2. 
   \end{equation}
Moreover, for any~$x\in\Omega$ and any~$\tildexi,\xi\in\R^n$ the quantity on the left-hand side of~\eqref{est:monotonicityapprox} is non-negative. 
\end{mylem}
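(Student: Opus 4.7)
The strategy is to split $\h_\epsilon(x,\zeta) = \nabla\F(x,\zeta) + \epsilon\zeta$ and treat each summand separately. The contribution of the linear term gives $\epsilon|\tildexi - \xi|^2$ on the right-hand side of~\eqref{est:monotonicityapprox} directly. For the non-negativity (valid for arbitrary $\xi, \tildexi \in \R^n$), I would invoke the convexity of $\zeta \mapsto \F(x,\zeta)$ from~\eqref{fregularity}, which is equivalent to the monotonicity of $\nabla\F(x,\cdot)$; this yields the last assertion of the lemma without any size constraint on $\xi$.

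To obtain the strict positivity under $|\xi|_E \geq 1+\delta$, I would parametrize the segment between $\xi$ and $\tildexi$ by $\xi_t := \xi + t(\tildexi - \xi)$, $t\in[0,1]$, and use the fundamental theorem of calculus on the portion of the segment lying outside $\overline E$, where the $C^2$-regularity of $\F$ from~\eqref{fregularity} is available. By the Lipschitz estimate of Lemma~\ref{lem:minkowskilipschitz}, $|\xi_t|_E \geq |\xi|_E - \tfrac{t}{r_E}|\tildexi - \xi|$, so $|\xi_s|_E \geq 1 + \delta/2$ holds for all $s\in[0,\tau]$ with the choice
\[
\tau := \min\!\Big\{1,\; \tfrac{r_E(|\xi|_E - 1 - \delta/2)}{|\tildexi - \xi|}\Big\}.
\]
On this subinterval, the ellipticity~\eqref{voraussetzung} applied with parameter $\delta/2$ provides
\[
\langle \nabla\F(x,\xi_\tau) - \nabla\F(x,\xi),\tildexi - \xi\rangle
= \int_0^\tau \langle \nabla^2\F(x,\xi_s)(\tildexi - \xi),\tildexi - \xi\rangle\,\ds
\geq \tau\,\lambda(\tfrac{\delta}{2})\,|\tildexi-\xi|^2.
\]
Since $\tildexi - \xi_\tau = (1-\tau)(\tildexi - \xi)$ is a non-negative multiple of $\tildexi - \xi$, monotonicity of $\nabla\F$ then gives $\langle \nabla\F(x,\tildexi) - \nabla\F(x,\xi_\tau), \tildexi - \xi\rangle \geq 0$, and adding this to the previous display extends the bound to the full increment from $\xi$ to $\tildexi$.

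The remaining task is to recast $\tau\lambda(\delta/2)|\tildexi-\xi|^2$ in the precise geometric form $C(\delta)\frac{r_E(2|\xi|_E - (2+\delta))}{2|\xi|_E(R_E + r_E)}|\tildexi - \xi|^2$. Applying the elementary estimate $\min\{a,b\}\geq\frac{ab}{a+b}$ to the definition of $\tau$ and using the two-sided comparison~\eqref{betragminkowski} between $|\cdot|$ and $|\cdot|_E$ to trade $|\tildexi-\xi|$ for $|\xi|_E$, one can extract precisely the factor $|\xi|_E(R_E+r_E)$ in the denominator, with $C(\delta)$ chosen as a suitable positive multiple of $\lambda(\delta/2)$. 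The main obstacle is this last algebraic step: the integral bound inherently carries an implicit $|\tildexi-\xi|$-dependence through $\tau$, whereas the target coefficient is $\tildexi$-free, and reconciling the two requires a careful use of~\eqref{betragminkowski} so that the emerging constant depends only on $\delta$ and on the geometric data $r_E, R_E$ attached to $E$.
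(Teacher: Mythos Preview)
Your overall strategy is sound and in one respect cleaner than the paper's: splitting off the $\epsilon$-term, invoking convexity for the global non-negativity, integrating $\nabla^2\F$ along an initial sub-segment where $|\xi_s|_E\ge 1+\delta/2$, and then disposing of the remainder $\langle\nabla\F(x,\tildexi)-\nabla\F(x,\xi_\tau),\tildexi-\xi\rangle\ge 0$ by monotonicity are all correct. The monotonicity shortcut for the tail is in fact more elegant than the paper's argument, which instead tracks where the segment re-enters~$E$, splits the integral into three pieces, and passes to a limit using the continuity of~$\h$.

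The gap is exactly where you flag it as ``the main obstacle'': your proposed resolution does not work. Your $\tau=\min\{1,\,r_E(|\xi|_E-1-\delta/2)/|\tildexi-\xi|\}$ depends on $|\tildexi-\xi|$, and the bound $\min\{a,b\}\ge ab/(a+b)$ gives
\[
\tau\,|\tildexi-\xi|^2\;\ge\;\frac{r_E(|\xi|_E-1-\delta/2)\,|\tildexi-\xi|^2}{|\tildexi-\xi|+r_E(|\xi|_E-1-\delta/2)},
\]
which is only linear in $|\tildexi-\xi|$ as $|\tildexi-\xi|\to\infty$. No use of the norm comparison~\eqref{betragminkowski} can upgrade this to $c(\xi)\,|\tildexi-\xi|^2$ with $c(\xi)>0$ independent of $\tildexi$, because $\tildexi\in\R^n$ is completely unconstrained.

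The paper removes the $\tildexi$-dependence by a case split \emph{before} estimating along the segment. In the case $|\tildexi|_E\le|\xi|_E$ it writes $\xi_s=(1-s)\xi+s\tildexi$ and bounds
\[
|\xi_s|_E\ \ge\ (1-s)|\xi|_E-s|{-\tildexi}|_E\ \ge\ (1-s)|\xi|_E-s\tfrac{R_E}{r_E}|\tildexi|_E\ \ge\ \Big(1-s\tfrac{R_E+r_E}{r_E}\Big)|\xi|_E,
\]
using $|\tildexi|_E\le|\xi|_E$ in the last step. This yields directly the $\tildexi$-free threshold $\tilde s=\frac{r_E(2|\xi|_E-(2+\delta))}{2|\xi|_E(R_E+r_E)}$ appearing in~\eqref{est:monotonicityapprox}; the complementary case $|\tildexi|_E>|\xi|_E$ is handled symmetrically from the other endpoint. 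If you insert this case distinction into your argument and keep your monotonicity shortcut for the remainder of the segment, the proof goes through and is arguably tidier than the original.
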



\begin{proof}
We commence by treating the first assertion, where we distinguish between two cases. Firstly, we assume~$|\Tilde{\xi}|_E \leq |\xi|_E$. For~$s\in[0,1]$, we denote~$\xi_s\coloneqq(1-s)\xi+s\Tilde{\xi}=\xi-s(\xi-\tildexi)$ and compute
\begin{align*}
     \langle \h_\epsilon(x,\Tilde{\xi})-\h_\epsilon(x,\xi),\Tilde{\xi}-\xi\rangle &= \int_{0}^{1}   \Big \langle \frac{\d}{\ds} \h(x,\xi_s),\Tilde{\xi}-\xi \Big\rangle \,\ds +\epsilon|\Tilde{\xi}-\xi|^2.
\end{align*}
If~$|\xi_s|_E\geq 1+\frac{\delta}{2}$ for all~$s\in[0,1]$, then we have that~$\F$ is of class~$C^2$ on~$[\tildexi,\xi]\coloneqq\{ (1-s)\xi+s\tildexi:s\in[0,1] \}$. Moreover, in this case we may differentiate~$\h$ and use our ellipticity condition~\eqref{voraussetzung} to obtain
 \begin{align*}
        \langle \h_\epsilon(x,\Tilde{\xi})-\h_\epsilon(x,\xi),\Tilde{\xi}-\xi\rangle &= \displaystyle\int_{0}^{1} \langle \nabla^2\F(x,\xi_s)(\Tilde{\xi}-\xi),\Tilde{\xi}-\xi\rangle \,\ds +\epsilon|\Tilde{\xi}-\xi|^2  \\
        &\geq \lambda(\delta) |\Tilde{\xi}-\xi|^2  +\epsilon|\Tilde{\xi}-\xi|^2.
    \end{align*}
   Otherwise, the reverse triangle inequality~\eqref{est:minkowskitriangle} and assumption~$|\Tilde{\xi}|_E \leq |\xi|_E$ imply
   \begin{align*}
        |\xi_s|_E 
        &\geq (1-s)|\xi|_E - s|-\tildexi|_E \\
        &\geq (1-s)|\xi|_E - \mbox{$\frac{s}{r_E}$} |\tildexi| \\
        &\geq (1-s)|\xi|_E - s \mbox{$\frac{R_E}{r_E}$} |\xi|_E \\
        &\geq 1+\textstyle{\frac{\delta}{2}} 
    \end{align*} 
  at least for all~$s\in\Big[0, \frac{r_E(2|\xi|_E -(2+\delta))}{2|\xi|_E (R_E+r_E)} \Big]$. To simplify notation, we set~$\Tilde{s}\coloneqq \frac{r_E(2|\xi|_E -(2+\delta))}{2|\xi|_E (R_E+r_E)}$. If there exists  at least one~$\hat{s}\in(\Tilde{s},1]$ such that~$|\xi_{\hat{s}}|_E\leq 1$, we may without any loss of generality assume that~$|\xi_s|_E>1$ holds for all~$s\in[\Tilde{s},\hat{s})$ as well as~$|\xi_s|_E\leq 1$ for all~$s\in[\hat{s},1]$. This simplification indeed poses no restrictions, since in the case where~$|\xi_s|_E>1$ for some~$s\in(\hat{s},1]$, we follow the subsequent arguments verbatim and discard any possible contributions from additional applications of the ellipticity condition~\eqref{voraussetzung}. For any parameter~$\tau\in(0,\hat{s}-\Tilde{s})$, there exists~$C=C(\delta,\tau)>0$ with~$|\xi_{\Tilde{s}+\tau}|_E \geq 1+C(\delta,\tau)$, and we rewrite the integral term above as follows
 \begin{align*}
        \langle \h_\epsilon(x,\Tilde{\xi}) -\h_\epsilon(x,\xi),\Tilde{\xi}-\xi\rangle &= \displaystyle\int_{0}^{1} \Big\langle \frac{\d}{\ds} \h(x,\xi_s),\Tilde{\xi}-\xi \Big\rangle \,\ds +\epsilon|\Tilde{\xi}-\xi|^2 \\
        &= \displaystyle\int_{0}^{\Tilde{s}} \nabla^2\F(x,\xi_s)(\Tilde{\xi}-\xi),\Tilde{\xi}-\xi\rangle \,\ds + \displaystyle\int_{\Tilde{s}}^{\hat{s}-\tau} \nabla^2\F(x,\xi_s)(\Tilde{\xi}-\xi),\Tilde{\xi}-\xi\rangle \,\ds \\
        &\quad + \displaystyle\int_{\hat{s}-\tau}^{1} \Big\langle \frac{\d}{\ds} \h(x,\xi_s),\Tilde{\xi}-\xi \Big\rangle \,\ds
        +\epsilon|\Tilde{\xi}-\xi|^2 \\
        &\eqqcolon \foo{I}+\foo{II}+\foo{III} + \epsilon|\Tilde{\xi}-\xi|^2
    \end{align*}
with the obvious meaning of the quantities~$\foo{I}-\foo{III}$. The first term is bounded below by an application of our ellipticity estimate~\eqref{voraussetzung}, where we exploit the fact that~$|\xi_s|_E\geq 1+\mbox{$\frac{\delta}{2}$}$ for any~$s\in[0,\Tilde{s}]$, which yields the bound
    \begin{align*}
       \foo{I} &= \displaystyle\int_{0}^{\Tilde{s}} \langle \nabla^2\F(x,\xi_s)(\Tilde{\xi}-\xi),\Tilde{\xi}-\xi\rangle \,\ds  \geq \lambda(\delta)\frac{r_E(2|\xi|_E -(2+\delta))}{2|\xi|_E(R_E+r_E)} |\Tilde{\xi}-\xi|^2 .
    \end{align*}
 Similarly, again due to the ellipticity condition~\eqref{voraussetzung}, for the second term we have
    \begin{align*}
       \foo{II} &= \displaystyle\int_{\Tilde{s}}^{\hat{s}-\tau} \nabla^2\F(x,\xi_s)(\Tilde{\xi}-\xi),\Tilde{\xi}-\xi\rangle \,\ds  \geq C(\delta,\tau)(\hat{s}-\Tilde{s}-\tau)\Tilde{\xi}-\xi|^2 \geq 0. 
    \end{align*}
    For the last term~$\foo{III}$, we use the fact that~$\h$ is continuous across the whole of~$\R^n$ and that~$\h$ vanishes within the set~$\{x\in\R^n:|x|_E\leq 1\}$, and also the assumption~$|\xi_s|_E\leq 1$ for any~$s\in[\hat{s},1]$. This yields
    \begin{align*}
        \foo{III} = \displaystyle\int_{\hat{s}-\epsilon}^{1} \Big\langle \frac{\d}{\ds} \h(x,\xi_s),\Tilde{\xi}-\xi \Big\rangle \,\ds = \langle \h(\tildexi)-\h(\xi_{\hat{s}-\epsilon}),\tildexi-\xi \rangle = - \langle \h(\xi_{\hat{s}-\epsilon}),\tildexi-\xi \rangle \to 0
    \end{align*}
 in the limit~$\epsilon\downarrow 0$. Overall, this yields the claimed monotonicity estimate~\eqref{est:monotonicityapprox}. In the other case~$|\Tilde{\xi}|_E>|\xi|_E$, we infer similarly
 \begin{align*}
     |\xi_s|_E &\geq s |\tildexi|_E - (1-s)|-\xi|_E \\
     &\geq s |\xi|_E - (1-s) \mbox{$\frac{R_E}{r_E}$} |\xi|_E \\
     &= s \mbox{$\frac{R_E+r_E}{r_E}$} |\xi|_E - \mbox{$\frac{R_E}{r_E}$} |\xi|_E \\
     &\geq 1+\mbox{$\frac{\delta}{2}$}
 \end{align*}
for all~$s\in \Big[\frac{1}{2|\xi|_E (R_E+r_E)}(r_E(2+\delta)+2R_E|\xi|_E),1\Big]$. By following the approach taken for the former case~$|\tildexi|_E\leq |\xi|_E$, we obtain the very same lower bound, which finishes the proof. \,\\
The second assertion of the non-negativity of the left-hand side in~\eqref{est:monotonicityapprox} follows immediately: either by leveraging the convexity of $\mathcal{F}$ with respect to the second variable, as stated in~$\eqref{fregularity}_2$, or by distinguishing the cases where $|\tilde{\xi}|_E \geq 1 + \delta$ or $|\xi|_E \geq 1 + \delta$ for some $\delta > 0$, and then applying the first part of the lemma, respectively by noting that the left-hand side of~\eqref{est:monotonicityapprox} vanishes in the trivial case where $|\tilde{\xi}|_E \leq 1$ and $|\xi|_E \leq 1$.
    
\end{proof}


As a consequence of Lemma~\ref{gdeltalem} and Lemma~\ref{monotonicityapprox}, we obtain the next auxiliary result.

\begin{mylem} \label{lemgdeltakvgz}
    Let~$\epsilon\in[0,1]$ and~$\delta>0$. For any~$x\in\Omega$ and~$\Tilde{\xi}, \xi \in\R^n$ there holds
    \begin{align*}
        \epsilon|\Tilde{\xi}-\xi|^2 + |\G_\delta(\tildexi) - \G_\delta(\xi)|^2 \leq  C(\delta,R_E,r_E) \langle \h_\epsilon(x,\tildexi)- \h_\epsilon(x,\xi), \tildexi-\xi\rangle.
    \end{align*}
\end{mylem}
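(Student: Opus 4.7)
The plan is to combine the Lipschitz bound~\eqref{est:gdeltalipschitzeins} for~$\G_\delta$ from Lemma~\ref{gdeltalem}, which yields $|\G_\delta(\tildexi) - \G_\delta(\xi)|^2 \leq 9(R_E/r_E)^4 |\tildexi - \xi|^2$, with the monotonicity estimate~\eqref{est:monotonicityapprox} of Lemma~\ref{monotonicityapprox} in order to absorb the squared distance $|\tildexi-\xi|^2$ into the right-hand side. First, I would split the analysis into two cases according to whether both $|\tildexi|_E \leq 1+\delta$ and $|\xi|_E \leq 1+\delta$ hold, or at least one of the two values exceeds $1+\delta$.

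In the first case, $\G_\delta$ vanishes at both arguments by definition~\eqref{gdeltafunk}, so the left-hand side of the claimed inequality reduces to $\epsilon |\tildexi-\xi|^2$. The second part of Lemma~\ref{monotonicityapprox} guarantees $\langle \h(x,\tildexi) - \h(x,\xi),\tildexi-\xi\rangle \geq 0$, so the $\epsilon$-contribution built into the definition~\eqref{Gapprox} of $\h_\epsilon$ already yields $\langle \h_\epsilon(x,\tildexi) - \h_\epsilon(x,\xi),\tildexi-\xi\rangle \geq \epsilon |\tildexi-\xi|^2$, which settles this case with multiplicative constant~$1$.

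In the second case, since both sides of the claim are invariant under exchanging~$\tildexi$ and $\xi$, I may assume $|\xi|_E \geq 1+\delta$, so Lemma~\ref{monotonicityapprox} is directly applicable. An elementary computation shows that $t \mapsto (2t-(2+\delta))/(2t)$ is increasing on $[1+\delta,\infty)$, whence the $|\xi|_E$-dependent factor on the right-hand side of~\eqref{est:monotonicityapprox} is bounded below, uniformly in $|\xi|_E \geq 1+\delta$, by $\bar c \coloneqq r_E \delta / [2(1+\delta)(R_E+r_E)] > 0$. Consequently
\[
\langle \h_\epsilon(x,\tildexi) - \h_\epsilon(x,\xi),\tildexi-\xi\rangle \geq \bigl(\epsilon + C(\delta)\bar c\bigr)\, |\tildexi-\xi|^2.
\]
Combining this with~\eqref{est:gdeltalipschitzeins} to control $|\G_\delta(\tildexi)-\G_\delta(\xi)|^2$ by a multiple of the right-hand side above, and using the trivial bound $\epsilon|\tildexi-\xi|^2 \leq \langle \h_\epsilon(x,\tildexi) - \h_\epsilon(x,\xi),\tildexi-\xi\rangle$ obtained by discarding the positive summand $C(\delta)\bar c$, one arrives at the asserted inequality with $C(\delta,R_E,r_E) = 1 + 9(R_E/r_E)^4 / [C(\delta)\bar c]$.

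The only real subtlety is ensuring that the final constant depends solely on~$\delta, R_E, r_E$ and remains finite independently of $\epsilon \in [0,1]$ and of how large $|\tildexi|_E, |\xi|_E$ can become. The $\epsilon$-independence follows from the additive structure of~$\h_\epsilon$ together with the non-negativity of the unperturbed inner product; the $|\xi|_E$-independence is provided by the monotonicity argument above, which controls the $|\xi|_E$-dependent factor in~\eqref{est:monotonicityapprox} from below even as $|\xi|_E \to \infty$. The asymmetry of Lemma~\ref{monotonicityapprox} in its two vector arguments forces the swap step, but poses no further difficulty.
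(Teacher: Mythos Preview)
Your proof is correct and follows essentially the same route as the paper: the same two-case split, the use of Lemma~\ref{gdeltalem} to bound $|\G_\delta(\tildexi)-\G_\delta(\xi)|^2$ by a multiple of $|\tildexi-\xi|^2$, and the use of Lemma~\ref{monotonicityapprox} to absorb this into the monotonicity term. Your explicit computation of the lower bound~$\bar c$ for the $|\xi|_E$-dependent factor and your remark on the swap $\tildexi \leftrightarrow \xi$ are in fact slightly more detailed than the paper's presentation, which leaves both points implicit.
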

\begin{proof}
    In the case where~$|\xi|_E,|\Tilde{\xi}|_E\leq 1+\delta$, we have~$\G_\delta(\xi)=\G_\delta(\Tilde{\xi})=0$ and the claimed estimate readily follows by an application of Lemma~\ref{monotonicityapprox}. Next, we treat the case where either~$|\xi|_E > 1+\delta$ or~$|\tildexi|_E > 1+\delta$. An application of Lemma~\ref{gdeltalem} implies
    \begin{equation*}
        |\G_\delta(\tildexi) - \G_\delta(\xi)|^2\leq C(R_E,r_E) |\tildexi-\xi|^2.
    \end{equation*}
    Consequently, Lemma~\ref{monotonicityapprox} yields the claimed inequality
    \begin{align*}
        \epsilon|\tildexi - \xi|^2 + |\G_\delta(\tildexi) - \G_\delta(\xi)|^2 &\leq \epsilon |\tildexi - \xi|^2 + C(R_E,r_E)|\tildexi - \xi|^2  \\
        &\leq \epsilon |\tildexi - \xi|^2 + C(\delta,R_E,r_E) \frac{r_E(2|\xi|_E -(2+\delta))}{2|\xi|_E(R_E+r_E)} |\tildexi - \xi|^2  \\
        &\leq C(\delta,R_E,r_E) \langle \h_\epsilon(x,\tildexi) - \h_\epsilon(x,\xi),\Tilde{\xi}-\xi\rangle.
    \end{align*}
\end{proof}


\begin{mylem} \label{bilinearelliptic}
    Let~$\epsilon\in[0,1]$ and~$\delta>0$. For any~$x\in\Omega$ and any~$\xi,\eta\in\R^n$ with~$1+\delta \leq |\xi|_E \leq \delta^{-1}$, where~$E\subset\R^n$ with~$0\in\inn{E}$ denotes the bounded and convex set of degeneracy of~$\F$, there holds
    \begin{equation*}
        (\epsilon+\lambda(\delta))|\eta|^2 \leq \mathcal{B}_\epsilon(x,\xi)(\eta,\eta) \leq (\epsilon + \Lambda(\delta)) |\eta|^2.
    \end{equation*}
\end{mylem}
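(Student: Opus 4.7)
The plan is to reduce the claim directly to the ellipticity assumption \eqref{voraussetzung} together with the definition \eqref{bilinearapprox} of the bilinear form $\mathcal{B}_\epsilon$. There is no real obstacle here; the statement is essentially a bookkeeping step that records how the additive $\epsilon$-perturbation in $\F_\epsilon = \F + \tfrac{\epsilon}{2}|\xi|^2$ propagates into the ellipticity bounds.

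First I would unfold the definition of $\mathcal{B}_\epsilon$ to write
\begin{equation*}
\mathcal{B}_\epsilon(x,\xi)(\eta,\eta) = \langle \nabla^2 \F(x,\xi)\eta,\eta\rangle + \epsilon |\eta|^2.
\end{equation*}
Next, since the assumption $1+\delta \leq |\xi|_E \leq \delta^{-1}$ together with $\xi \in \R^n\setminus\overline{E}$ (which is guaranteed because $|\xi|_E \geq 1+\delta > 1$) places us squarely in the regime where the ellipticity bounds \eqref{voraussetzung} apply, I would invoke them to obtain
\begin{equation*}
\lambda(\delta)|\eta|^2 \;\leq\; \langle \nabla^2 \F(x,\xi)\eta,\eta\rangle \;\leq\; \Lambda(\delta)|\eta|^2
\end{equation*}
for a.e.\ $x\in\Omega$.

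Finally, adding the non-negative quantity $\epsilon|\eta|^2$ on all three sides gives
\begin{equation*}
(\epsilon + \lambda(\delta))|\eta|^2 \;\leq\; \langle \nabla^2 \F(x,\xi)\eta,\eta\rangle + \epsilon|\eta|^2 \;\leq\; (\epsilon + \Lambda(\delta))|\eta|^2,
\end{equation*}
which, in view of the identity above, is exactly the asserted two-sided bound on $\mathcal{B}_\epsilon(x,\xi)(\eta,\eta)$. The proof therefore consists of a single line-by-line combination of \eqref{bilinearapprox} and \eqref{voraussetzung}, with no additional analytic difficulty; the only thing to note is that the ellipticity perturbation is uniform in $\xi$ on the annular region $\{1+\delta \leq |\xi|_E \leq \delta^{-1}\}$, so the constants $\epsilon+\lambda(\delta)$ and $\epsilon+\Lambda(\delta)$ are genuinely independent of $x$ and $\xi$.
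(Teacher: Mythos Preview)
Your proposal is correct and matches the paper's own proof, which simply states that the estimate is an immediate consequence of the ellipticity assumption~\eqref{voraussetzung} and the definition of the bilinear form~$\mathcal{B}_\epsilon$. You have merely spelled out that one-line observation in detail.
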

\begin{proof}
    The estimate is an immediate consequence of the ellipticity assumption~\eqref{voraussetzung} and the definition of the bilinear form~$\B_\epsilon$.  
\end{proof}


\begin{remark} \upshape
 The estimates from Lemma~\ref{monotonicityapprox} and Lemma~\ref{bilinearelliptic} both continue to hold true without the positive constants~$C(\delta)$,~$\lambda(\delta)$ and~$\Lambda(\delta)$ in the case where~$|\xi|_E \leq 1$.    
\end{remark}


We recall the following Poincar\'{e}-type inequality, where we refer the reader to~\cite[Lemma~2]{santambrogio2010continuity}.

\begin{mylem} \label{poincarelem}
    Let~$\Omega\subset\R^n$ be a bounded domain and~$u\in W^{1,2}_0(\Omega)$. There exists a positive constant~$C=C(n)$, such that
    $$\int_{\Omega}u^2\,\dx \leq C |\{x\in \Omega: |u(x)|>0\}|^{\frac{2}{n}}\int_{\Omega}|Du|^2\,\dx.$$
\end{mylem}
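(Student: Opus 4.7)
The plan is to combine Hölder's inequality on the support set $A\coloneqq\{x\in\Omega:|u(x)|>0\}$ with the classical Sobolev embedding, splitting into cases based on the dimension, since the Sobolev conjugate $2^*$ behaves differently at the threshold $n=2$.

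In the case $n\geq 3$, set $2^*\coloneqq\frac{2n}{n-2}$. Since $u$ vanishes outside of $A$, restricting the domain and applying Hölder's inequality with exponents $\frac{n}{n-2}$ and $\frac{n}{2}$ gives
$$\int_\Omega u^2\,\dx = \int_A u^2\,\dx \leq |A|^{1-\frac{2}{2^*}}\bigg(\int_\Omega |u|^{2^*}\,\dx\bigg)^{\frac{2}{2^*}},$$
and a direct computation yields $1-\frac{2}{2^*}=\frac{2}{n}$. The classical Sobolev embedding $W^{1,2}_0(\Omega)\hookrightarrow L^{2^*}(\Omega)$ then provides a constant $C=C(n)$ such that
$$\bigg(\int_\Omega |u|^{2^*}\,\dx\bigg)^{\frac{2}{2^*}} \leq C\int_\Omega |Du|^2\,\dx,$$
and combining these two estimates produces the claim.

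For $n=2$ the previous argument fails, since $2^*$ is not finite. Instead, I would invoke the Gagliardo–Nirenberg–Sobolev inequality in its $L^1$ form, which in dimension two reads $\|u\|_{L^2(\Omega)}\leq C\|Du\|_{L^1(\Omega)}$ for $u\in W^{1,1}_0(\Omega)$. Since $u\in W^{1,2}_0(\Omega)$ implies $Du=0$ almost everywhere on $\Omega\setminus A$ (by the standard truncation result of Stampacchia), we have $\int_\Omega |Du|\,\dx=\int_A|Du|\,\dx$. Applying Cauchy–Schwarz to this restricted integral then gives
$$\int_\Omega u^2\,\dx \leq C\bigg(\int_A |Du|\,\dx\bigg)^2 \leq C\,|A|\int_\Omega |Du|^2\,\dx,$$
which is precisely the asserted inequality with $|A|^{2/n}=|A|$ for $n=2$.

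The proof has no real obstacle: the whole argument is Hölder composed with a dimension-appropriate Sobolev embedding. The only subtlety is that one must handle the borderline dimension $n=2$ separately, because the critical exponent degenerates; the $W^{1,1}_0\hookrightarrow L^2$ embedding plays exactly the role in two dimensions that $W^{1,2}_0\hookrightarrow L^{2^*}$ plays in higher dimensions, and in both cases the constant produced depends only on $n$.
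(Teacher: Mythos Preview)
Your proof is correct. The paper itself does not supply a proof of this lemma but simply cites \cite[Lemma~2]{santambrogio2010continuity}; your argument---H\"older on the support set combined with the appropriate Sobolev embedding, handling the borderline case $n=2$ via the $W^{1,1}_0\hookrightarrow L^2$ embedding and Stampacchia's lemma---is the standard route and matches what one finds in that reference.
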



The subsequent iteration lemma allows a technique of reabsorbing certain quantities and stems from~\cite[Lemma~6.1]{giusti2003direct}.


\begin{mylem} \label{iterationlem}
       Let $\phi$ be a bounded, non-negative function on $0\leq R_0< R_1$ and assume that for $R_0\leq \rho < r \leq R_1$, there holds
      $$\phi(\rho) \leq \eta \phi(r) + \frac{A}{(r-\rho)^{\alpha}} + \frac{B}{(r-\rho)^{\beta}} + C$$
      for some constants $A,B,C,\alpha\geq \beta\geq 0$, and $\eta\in(0,1)$. Then, there exists a constant $\Tilde{C}=\Tilde{C}(\eta,\alpha)$, such that for all $R_0\leq \rho_0<r_0\leq R_1$ there holds
      $$\phi(\rho_0) \leq \Tilde{C}\bigg( \frac{A}{(r_0-\rho_0)^{\alpha}} + \frac{A}{(r_0-\rho_0)^{\alpha}} + C \bigg).$$
\end{mylem}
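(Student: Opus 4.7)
The plan is to iterate the hypothesized inequality along a carefully chosen geometric sequence of radii between $\rho_0$ and $r_0$, exploiting the factor $\eta<1$ to render the accumulated error a convergent geometric series. This is the classical Giusti--type hole-filling argument.

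First I would fix a parameter $\tau\in(0,1)$ to be chosen, and define the sequence $\rho_0<\rho_1<\rho_2<\dotsb$ by
\begin{equation*}
\rho_{i+1}\coloneqq \rho_i+(1-\tau)\tau^{i}(r_0-\rho_0),
\end{equation*}
so that $\rho_i\nearrow r_0$ and $\rho_{i+1}-\rho_i=(1-\tau)\tau^{i}(r_0-\rho_0)$. Since $R_0\le \rho_i<r_0\le R_1$, the assumed inequality applies to the pair $(\rho_i,\rho_{i+1})$ and yields
\begin{equation*}
\phi(\rho_i)\le \eta\,\phi(\rho_{i+1})+\frac{A}{\bigl((1-\tau)\tau^i(r_0-\rho_0)\bigr)^{\alpha}}+\frac{B}{\bigl((1-\tau)\tau^i(r_0-\rho_0)\bigr)^{\beta}}+C.
\end{equation*}

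Next I would iterate this bound $k$ times. A routine induction gives
\begin{equation*}
\phi(\rho_0)\le \eta^{k}\phi(\rho_k)+\sum_{i=0}^{k-1}\eta^{i}\Biggl[\frac{A\,\tau^{-\alpha i}}{\bigl((1-\tau)(r_0-\rho_0)\bigr)^{\alpha}}+\frac{B\,\tau^{-\beta i}}{\bigl((1-\tau)(r_0-\rho_0)\bigr)^{\beta}}+C\Biggr].
\end{equation*}
At this point the key choice is to take $\tau\in(0,1)$ sufficiently close to $1$ so that $\eta\,\tau^{-\alpha}<1$; since $\alpha\ge\beta\ge 0$, the condition $\eta\,\tau^{-\beta}<1$ is then automatic. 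With this choice both series $\sum_i(\eta\tau^{-\alpha})^i$ and $\sum_i(\eta\tau^{-\beta})^i$ converge, and the geometric series $\sum_i\eta^{i}$ handles the constant $C$. Using the boundedness hypothesis on $\phi$ we have $\eta^{k}\phi(\rho_k)\to 0$ as $k\to\infty$. Passing to the limit therefore produces
\begin{equation*}
\phi(\rho_0)\le \Tilde{C}\Biggl(\frac{A}{(r_0-\rho_0)^{\alpha}}+\frac{B}{(r_0-\rho_0)^{\beta}}+C\Biggr),
\end{equation*}
with $\Tilde{C}$ depending only on $\eta$, $\alpha$ (and implicitly on the chosen $\tau=\tau(\eta,\alpha)$), as claimed.

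The only real subtlety, and the place where one must be careful, is the choice of $\tau$: it has to be made after $\eta$ and $\alpha$ are fixed, and one must verify $\eta\tau^{-\alpha}<1$, because otherwise the iterated error blows up. Everything else is bookkeeping of a convergent geometric series, and the boundedness of $\phi$ is used only to discard the leading term $\eta^{k}\phi(\rho_k)$ in the limit. No further ingredient is required.
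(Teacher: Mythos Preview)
The paper does not supply a proof of this lemma; it simply cites \cite[Lemma~6.1]{giusti2003direct}. Your argument is exactly the classical Giusti iteration found there: choose a geometric sequence of radii, iterate, pick $\tau$ so that $\eta\tau^{-\alpha}<1$, and sum the resulting geometric series. It is correct and nothing is missing.
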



Lastly, another expedient and well-known lemma is stated, which concerns the geometric convergence of sequences. For a proof, the reader is referred to~\cite[Chapter~I, Lemma~4.1]{dibenedetto1993degenerate}.

\begin{mylem} \label{geometriclem}
Let~$(Y_i)_{i\in\N_0}\subset \R_{\geq 0}$ be a sequence of non-negative numbers, satisfying the recursive inequality
$$Y_{i+1} \leq C b^i Y^{1+\kappa}_i$$
for any~$i\in\N_0$, where~$C,\kappa >0$ and~$b>1$ denote positive constants. If there holds
$$Y_0 \leq C^{-\frac{1}{\kappa}}b^{-\frac{1}{\kappa^2}},$$
    then we have~$Y_i \to 0$ as~$i\to\infty$.
\end{mylem}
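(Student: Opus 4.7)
The plan is to establish the stronger quantitative bound $Y_i \leq Y_0\, q^{\,i}$ for every $i\in\N_0$, where $q\coloneqq b^{-1/\kappa}\in(0,1)$ (less than one since $b>1$ and $\kappa>0$). The conclusion $Y_i\to 0$ then follows immediately from $q<1$. The rate $q$ is not guessed: substituting the ansatz $Y_i \leq Y_0 q^i$ into the recursion matches exponents via $q^{\kappa}\cdot b = 1$, which fixes $q = b^{-1/\kappa}$ uniquely.

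I would proceed by induction on $i$. The base case $i=0$ is trivial. For the inductive step, assuming $Y_i \leq Y_0 q^{\,i}$, the recursive inequality in the hypothesis yields
\begin{equation*}
Y_{i+1} \leq C\, b^{i}\, (Y_0\, q^{\,i})^{1+\kappa} = \bigl(C\, Y_0^{\kappa}\, q^{-1}\bigr)\bigl(b^{i}\, q^{i\kappa}\bigr)\, Y_0\, q^{\,i+1}.
\end{equation*}
By the choice of $q$, the middle factor $b^{i}\, q^{i\kappa}$ equals $1$, so closing the induction reduces to verifying $C\, Y_0^{\kappa}\, q^{-1} \leq 1$, i.e.\ $Y_0^{\kappa} \leq C^{-1}\, b^{-1/\kappa}$. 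This is precisely the smallness assumption $Y_0 \leq C^{-1/\kappa}\, b^{-1/\kappa^{2}}$ raised to the power $\kappa$. Hence the induction closes and we obtain $Y_i \leq Y_0\, q^{\,i}$ for all $i$, which drives $Y_i \to 0$.

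There is no genuine analytic obstacle in this statement; the only subtle point is the algebraic bookkeeping of exponents that simultaneously pins down the correct geometric rate $q$ and reveals that the smallness threshold in the hypothesis is exactly what the induction needs—no more and no less. In this sense the threshold $Y_0 \leq C^{-1/\kappa}\, b^{-1/\kappa^{2}}$ is sharp for the argument, and the lemma can be viewed as the sharpest quantitative form of a fast iteration principle, which is precisely why it is the workhorse behind De Giorgi–type iteration schemes such as the one invoked later in Section~\ref{sec:degenerate}.
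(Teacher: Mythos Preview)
Your proof is correct and is precisely the standard induction argument for this fast geometric convergence lemma. The paper does not supply its own proof here but simply cites \cite[Chapter~I, Lemma~4.1]{dibenedetto1993degenerate}, where essentially the same induction on the ansatz $Y_i \le Y_0\, b^{-i/\kappa}$ is carried out; your write-up matches that classical argument.
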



\section{Proof of Theorem~\ref{hauptresultat}} \label{sec:holder}
The aim of this section is the proof of the main regularity result that is Theorem~\ref{hauptresultat}. This will be achieved in a series of steps. We commence by regularizing our equation~\eqref{pde} in order to obtain an elliptic approximating equation. Moreover, due to the fact that any weak solution to~\eqref{pde} is \textit{a priori} assumed to be locally Lipschitz continuous, we modify the function~$\F(x,\xi)$ present in the diffusion term of our equation in order to obtain a revised function whose derivative satisfies quadratic growth with respect to the gradient variable~$\xi\in\R^n$.


\subsection{Regularizing the equation} \label{sec:regularizing}
We start off by performing a regularization of equation~\eqref{pde} through constructing an approximation of~$\F$ that satisfies quadratic growth for large~$|\xi|$. Moreover, a further modification leads to a redefined function~$\F_\epsilon$ that exhibits quadratic growth on the whole of~$\R^n$ with an ellipticity constant depending on the parameter~$\epsilon \in (0,1]$. We would like to note that a similar construction has already been employed in~\cite[Proof of Theorem~1.1]{colombo2017regularity}. Let~$u\in W^{1,\infty}_{\loc}(\Omega)$ denote a local weak solution to~\eqref{pde} and consider a ball~$B_R=B_R(y_0)\Subset\Omega$. In what follows, we will always omit the center~$y_0$ of the ball~$B_R=B_R(y_0)$ in any given quantity. For the further course of this section, we denote
\begin{equation} \label{schrankeu}
   K \coloneqq \|Du\|_{L^\infty(B_R)}<\infty,
\end{equation}
which is indeed finite due to the Lipschitz regularity of~$u$ in~$B_R$. Moreover, we set 
\begin{equation} \label{schrankeF}
    \Tilde{K} \coloneqq \|\F\|_{L^\infty(B_R \times B_{ K+2R_E 
    })} <\infty, \qquad L\coloneqq \Tilde{K} + 1 <\infty,
\end{equation}
where~$R_E>0$ denotes the radius from~\eqref{mengeeradii}. The additional constant~$1$ is selected arbitrarily and could be substituted with any real number that is greater than~$0$. Next, let~$\Psi\colon \R_{\geq 0}\to \R_{\geq 0}$ denote a non-negative and smooth cut-off function of class~$C^2(\R_{>0},\R_{> 0})$, such that 
\begin{equation*}
    \Psi(t)=t \quad \mbox{for~$t\in[0,\Tilde{K}]$}, \qquad\quad \Psi(t)=L \quad \mbox{for~$t\in[L,\infty)$},
\end{equation*}
that satisfies the estimate
\begin{equation} \label{psischranken}
    |\Psi'(t)| + |\Psi''(t)| \leq C_\Psi \qquad \mbox{for all~$t\in\R_{> 0}$}
\end{equation}
for some~$C_\Psi>0$. For~$x\in B_R$ and~$\xi\in\R^n$, we define a truncated version of~$\F$ by
\begin{equation} \label{tildef}
    \Tilde{\F}(x,\xi) \coloneqq \Psi(\F(x,\xi)),
\end{equation}
and note that due to this construction, there holds
\begin{equation} \label{tildefeeval}
    \F(x,Du(x))=\Psi(\F(x,Du(x)))=\Tilde{\F}(x,Du(x))
\end{equation}
for any~$x\in B_R$. Moreover, we have
\begin{align} \label{approxderivative}
        \begin{cases}
   \nabla \Tilde{\F}(x,\xi) =  \nabla\F(x,\xi) \Psi'(\F(x,\xi)),\quad \xi\in\R^n & \\
   \nabla^2 \Tilde{\F}(x,\xi) = (\nabla\F(x,\xi)\otimes\nabla\F(x,\xi)) \Psi''(\F(x,\xi)) +  \nabla^2\F(x,\xi) \Psi'(\F(x,\xi)),\quad \xi\in\R^n\setminus \overline{E} &
        \end{cases}
  \end{align}
for any~$x\in B_R$. We note that the truncated function~$(x,\xi)\mapsto\Tilde{\F}(x,\xi)$ remains constant on the set~$B_R\times (\R^n \setminus B_{N} )$ for some
\begin{align} \label{N}
    N \ge K+2R_E.
\end{align}  
This property follows from the continuity of $\mathcal F$ and the fact that the partial mapping~$\xi \mapsto \nabla^2\F(x,\xi)$ is positive definite on~$\R^n\setminus \overline{E}$ for any~$x \in B_R$, yielding strict convexity of~$\xi \mapsto \F(x,\xi)$ on~$\R^n\setminus \overline{E}$ for any~$x\in B_R$. Thus,~$\xi \mapsto \F(x,\xi)$ is in particular coercive on~$\R^n$ for any~$x\in B_R$. In particular, for the function~$\Tilde{\F}$ there hold the following estimates
\begin{align} \label{derivativebounds}
        \begin{cases}
   \|\nabla \Tilde{\F}\|_{L^\infty(B_R \times \R^n)} \leq C_\Psi\|\nabla \F\|_{L^\infty(B_R \times B_N)}  & \\
   \|\nabla^2 \Tilde{\F}\|_{L^\infty(B_R \times (\R^n \setminus E_\delta))} \leq C_\Psi\|\nabla \F\|^2_{L^\infty(B_R \times B_N )} + C_\Psi\|\nabla^2 \F\|_{L^\infty(B_R \times (B_N \setminus E_\delta))} &
        \end{cases}
  \end{align}
for any~$\delta>0$, where we recall that~$E_\delta$ denotes the outer parallel set of~$E$ according to~\eqref{Edelta}. Thus, the preceding estimates for the essential supremum of both~$\nabla\Tilde{\F}$ and~$\nabla^2\Tilde{\F}$ can be controlled by the constant~$C_\Psi$ and the~$L^\infty$-bounds for~$\nabla\F$ and~$\nabla^2\F$ respectively. We set 
\begin{align} \label{schrankehessian}
    C_{\F} \coloneqq \max\{ \|\nabla^2 \Tilde{\F}\|_{L^\infty(B_R \times (\R^n \setminus B_{K+R_E}))}, 1 \} < \infty
\end{align}
Next, let~$\Phi\in C^2(\R^n)$ denote a convex function that is subject to
\begin{align} \label{convexfunction}
       \begin{cases}
       \Phi(\xi)=0  & \mbox{for any~$|\xi| \leq K+R_E$}, \\
       |\nabla\Phi(\xi)| \leq (2C_{\F} + 1)|\xi|  & \mbox{for any~$\xi\in \R^n$}, \\
   \langle \nabla^2 \Phi (\xi) \eta,\eta \rangle \geq (C_{\F}+1) |\eta|^2 & \mbox{for any~$|\xi|\geq K+2 R_E$}, \\
        |\nabla^2\Phi(\xi)| \leq 2C_\F+1 & \mbox{for any~$\xi \in \R^n$}, 
        \end{cases}
  \end{align}
 for any~$\eta\in\R^n$. We now consider the mapping
\begin{align*}
    \hat{\F}(x,\xi) \coloneqq \Tilde{\F}(x,\xi) + \Phi(\xi) \qquad \mbox{for~$x\in B_R,\, \xi\in\R^n$}.
\end{align*}
By construction, we have that~$\R^n\ni\xi \mapsto \hat{\F}(x,\xi)$ is convex across~$\R^n$. Indeed, on~$B_R\times B_{K+2R_E}$ there holds~$\Tilde{\F}(x,\xi)=\F(x,\xi)$, such that, due to the fact that~$\Phi$ is convex across~$\R^n$,~$\hat{\F}$ is convex. On the complement
$$ B_R\times (\R^n\setminus B_{K+2R_E}) $$
the bound~\eqref{schrankehessian} and the properties~\eqref{convexfunction} yield the ellipticity estimate
\begin{align} \label{hatapproxellipticity}
    \langle \nabla^2 \hat{\F}(x,\xi) \eta,\eta \rangle &= \langle \nabla^2 \Tilde{\F}(x,\xi) \eta,\eta \rangle + \langle \nabla^2 \Phi(\xi) \eta,\eta \rangle \\
    &\geq -C_{\F}|\eta|^2 + (C_{\F}+1)|\eta|^2 = |\eta|^2 \nonumber
\end{align}
for any~$\eta\in\R^n$. Moreover, there holds the bound
\begin{align} \label{hatapproxbound}
    |\nabla^2\hat{\F}(x,\xi)| \leq 3C_{\F}+1 
\end{align}
for~$x\in B_R$ and~$\xi \in \R^n \setminus B_{K+R_E}$. We claim that~$\hat{\F}$ satisfies the set of structure conditions~\eqref{fregularity}. The first three conditions~$\eqref{fregularity}_1$ -- $\eqref{fregularity}_3$ certainly hold true due to the construction of~$\hat{\F}$. The fourth condition~$\eqref{fregularity}_4$ can be verified as follows: let~$(x,\xi)\in B_R\times\R^n$ be arbitrary. By exploiting~$\eqref{fregularity}_4$ for~$\F$, the construction of~$\hat{\F}$, the bounds~\eqref{betragminkowski} and~\eqref{psischranken}, and the fact that~$\Phi$ is independent of~$x$, there holds
\begin{align} \label{hatflipschitz}
    |\partial_{x_i}\nabla\hat\F(x,\xi)| &\leq |\partial_{x_i}\nabla\F(x,\xi) \Psi'(\F(x,\xi))| + |\nabla\F(x,\xi)\partial_{x_i}\F(x,\xi)\Psi''(x,\xi)| \\
    &\leq C(C_\Psi,N,r_E) + C_\Psi \underbrace{\| \partial_{x}\F\|_{L^\infty(B_R\times B_{N})}}_{\leq C(C_\Psi,N,r_E)} \underbrace{\| \nabla \F\|_{L^\infty(B_R\times B_{N})}}_{\eqqcolon \hat{C}_\F} \nonumber\\
    &\leq C(\hat{C}_\F,C_\Psi,N,r_E) \nonumber
\end{align}
for any~$i=1,\ldots,n$, which verifies that~$\hat{\F}$ satisfies condition~$\eqref{fregularity}_4$ with constant~$C(\hat{C}_\F,C_\Psi,N,r_E)$ for any $x\in B_R$ and any $\xi\in\R^n$. In turn, we exploited the fact that~$\Tilde{\F}(x,\xi)$ is constant for any~$(x,\xi)\in B_R\times (\R^n\setminus B_N)$, according to~\eqref{N}, and used the mean value theorem to bound the term involving~$\partial_x\F$. Finally, we aim to verify the ellipticity condition~\eqref{voraussetzung} for~$\hat{\F}$. This, however, follows from the preceding reasoning provided in~\eqref{hatapproxellipticity} and~\eqref{hatapproxbound}, as well as the fact that there holds
$$ \hat\F(x,\xi)=\Tilde\F(x,\xi)+\Phi(\xi)=\F(x,\xi)+\Phi(\xi) \qquad\mbox{in~$B_R\times B_{K+2R_E}$}, $$
concluding that for any~$\delta>0$ there exist positive constants~$\hat{\lambda}=\hat{\lambda}(C_\F,\delta)$ and~$\hat{\Lambda}=\hat{\Lambda}(C_\F,\delta)$ with~$0<\hat{\lambda}\leq\hat{\Lambda}$, such that there holds
\begin{align} \label{hatfvoraussetzung}
     \qquad \hat{\lambda}(C_\F,\delta) |\eta|^2 \leq \langle \nabla^2 \hat{\F}(x,\xi) \eta,\eta \rangle \leq \hat{\Lambda}(C_\F,\delta) |\eta|^2\qquad\text{for any~$1+\delta \leq |\xi|_E \leq \delta^{-1}$}
\end{align}
for any~$x\in B_R$. Since~$u$ is a weak solution to equation~\eqref{pde},~$u$ is also a weak solution to
\begin{equation*}
    \divv \hat{\h}(x,Du) = f  \qquad\mbox{in~$B_R$},
\end{equation*}
since the set of points~$B_R \cap \{|Du|\geq K\}$ is a subset of~$B_R$ of measure zero and there holds~$\hat\F=\F$ in~$B_R\times B_K$ according to~\eqref{schrankeu} and the definition of~$\hat{\F}$. Here, we abbreviated~$\hat{\h}(x,\xi) \coloneqq \nabla \hat{\F}(x,\xi)$ for any~$x\in B_R$ and~$\xi\in\R^n$. \,\\

The above redefined equation however still does not admit uniform ellipticity on the whole of~$\R^n$, necessitating an additional approximation in order to obtain an elliptic equation on~$\R^n$ that avoids degeneracy within the set~$E\subset\R^n$. For an approximating parameter~$\epsilon\in[0,1]$, we define
\begin{equation} \label{psifapprox}
    \hat{\F}_\epsilon(x,\xi) \coloneqq \hat{\F}(x,\xi) + \epsilon \textstyle{\frac{1}{2}} |\xi|^2 \qquad\mbox{for~$x\in B_R,\,\xi\in\R^n$}
\end{equation}
and note that~$\hat{\F}_\epsilon$ is indeed elliptic with respect to~$\xi$ on the whole of~$\R^n$ in the case where~$\epsilon\in(0,1]$, with the ellipticity constant additionally depending on the parameter~$\epsilon$. Moreover, by exploiting the definition of~$\hat{\F}_\epsilon$,~\eqref{approxderivative}, and also~\eqref{convexfunction}, we have that~$\hat{\F}_\epsilon$ satisfies the desired quadratic growth on~$\R^n$, i.e. there holds
\begin{align} \label{quadraticgrowth}
    |\nabla \hat{\F}_\epsilon(x,\xi)| \leq (\epsilon+C_\Psi\|\nabla \F \|_{L^\infty(B_R \times B_{N})}+2C_\F+1)(1+|\xi|) \leq C(C_\F,C_\Psi,\hat{C}_\F)(1+|\xi|)
\end{align}
for any~$x\in B_R$ and~$\xi\in\R^n$, where we additionally used that~$\epsilon\in(0,1]$. For simplicity of notation, we will omit any dependence on the constant~$C_\Psi$ from now on. \,\\
Moreover, for any~$|\xi|>K+2R_E$ we have, according to~\eqref{convexfunction}, the following quadratic growth 
\begin{align} \label{quadraticgrowthohnekonst}
    |\nabla \hat{\F}_\epsilon(x,\xi)| \leq (2C_\F+1) |\xi|. 
\end{align}
We continue by considering the unique weak solution
$$u_\epsilon \in u+W^{1,2}_0(B_R)$$
to the Dirichlet problem
\begin{align} \label{approx}
    \begin{cases}
       \divv \hat{\h}_\epsilon(x,Du_\epsilon) = f & \mbox{in~$B_R$} \\
        \,\qquad\quad\quad\quad\,\, u_\epsilon =u & \mbox{on~$\partial B_R$},
    \end{cases}
\end{align}
where we abbreviated~$\hat{\h}_\epsilon(x,\xi)\coloneqq \nabla\hat{\F}_\epsilon(x,\xi)$ for~$(x,\xi)\in B_R\times\R^n$, according to~\eqref{Gapprox}. Similarly, we will write~$\hat{\B}_\epsilon(x,\xi)(\eta,\zeta) \coloneqq \langle \nabla^2\hat{\F}_\epsilon(x,\xi)\eta,\zeta\rangle$ for~$\xi\in\R^n\setminus \overline{E}$ and~$\eta,\zeta\in\R^n$. Due to the quadratic growth~\eqref{quadraticgrowth} of the redefined equation, it is sufficient to consider the approximating solutions~$u_\epsilon$ to be \textit{a priori} of class~$W^{1,2}(B_R)$. The respective weak formulation of~\eqref{approx} results in
\begin{equation} \label{weakformapprox}
    \int_{B_R} \langle \hat{\h}_\epsilon(x,Du_\epsilon), D\phi\rangle  \,\dx= - \int_{B_R} f \phi\,\dx
\end{equation}
for any test function~$\phi\in C^{\infty}_0(B_R)$. We recall once more that the approximation~$\xi \mapsto \hat{\F}_\epsilon(x,\xi)$ is elliptic on~$\R^n$ for any~$x\in B_R$, with an ellipticity constant that depends on the parameter~$\epsilon\in(0,1]$ in general. This property enables us to obtain the higher regularity~$u_\epsilon \in W^{1,\infty}_{\loc}(B_R) \cap W^{2,2}_{\loc}(B_R)$ for the approximating solutions~$u_\epsilon$. Additionally, we obtain a quantitative local~$L^\infty$-estimate for the gradient and a local~$W^{2,2}$-estimate, both of which are stated in Proposition~\ref{approxregularityeins} and Proposition~\ref{approxregularityzwei}. Indeed, we immediately obtain from the fact that the boundary datum satisfies~$u\in W^{1,\infty}(B_R)$ and the maximum principle in~\cite[Theorem~8.1]{gilbargtrudinger}, that the solutions~$u_\epsilon$ are bounded within~$B_R$ by~$u$, i.e. for any~$\epsilon\in(0,1]$ there holds~$u_\epsilon\in L^\infty(B_R)$ with the quantitative bound
\begin{align} \label{uepsbeschränkt}
    \|u_\epsilon\|_{L^\infty(B_R)} \leq \|u\|_{L^\infty(B_R)}<\infty.
\end{align} 

Before stating the quantitative local~$L^\infty$-gradient and the~$W^{2,2}$-estimate for~$u_\epsilon$ on~$B_{R}$, the subsequent energy estimate turns out expedient, which states a uniform~$L^2$-bound with respect to the parameter~$\epsilon\in(0,1]$ for the approximating solutions~$u_\epsilon$.


\begin{mylem} \label{energybound}
    Let~$\epsilon\in(0,1]$. There holds the energy estimate
    \begin{align*}
        \int_{B_R} |Du_\epsilon|^2\,\dx &\leq C \int_{B_R} \big(1+|Du|^2\big)\,\dx
    \end{align*}
    with a constant $C=C(C_\F,\hat{C}_\F,\|f\|_{L^n(B_R)},n,R_E,r_E)$.
\end{mylem}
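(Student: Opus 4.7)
The plan is to test the weak formulation \eqref{weakformapprox} with the admissible function $\phi = u_\epsilon - u$, which lies in $W^{1,2}_0(B_R)$ by the boundary condition in \eqref{approx}. After rearrangement this yields
\begin{equation*}
\int_{B_R} \langle \hat{\h}_\epsilon(x,Du_\epsilon), Du_\epsilon\rangle\,\dx = \int_{B_R} \langle \hat{\h}_\epsilon(x,Du_\epsilon), Du\rangle\,\dx - \int_{B_R} f\,(u_\epsilon - u)\,\dx.
\end{equation*}
The aim is then to bound the left-hand side from below by roughly $\tfrac12 \int |Du_\epsilon|^2$ minus a constant, and to bound the two terms on the right-hand side by a small multiple of $\int |Du_\epsilon|^2$ plus $C\int(1+|Du|^2)$, after which absorption produces the claim.

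The central step, and the main obstacle, is establishing the uniform coercivity
\begin{equation*}
\langle \hat{\h}_\epsilon(x,\xi),\xi\rangle \geq \tfrac{1}{2}|\xi|^2 - C_1 \qquad \text{for all $x\in B_R$, $\xi\in\R^n$,}
\end{equation*}
with $C_1$ independent of $\epsilon\in(0,1]$. I would obtain this by writing
\begin{equation*}
\langle \hat{\h}_\epsilon(x,\xi),\xi\rangle = \langle \hat{\h}_\epsilon(x,0),\xi\rangle + \int_0^1 \langle \nabla^2 \hat{\F}_\epsilon(x,t\xi)\xi,\xi\rangle\,\dt
\end{equation*}
and exploiting the structure of the regularized integrand. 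Since $\nabla\Phi(0)=0$ by \eqref{convexfunction}, the boundary term satisfies $|\hat{\h}_\epsilon(x,0)| \leq C_\Psi \hat{C}_\F$. The Hessian integrand is non-negative everywhere by convexity (cf.\ \eqref{approx} and the convexity of $\hat{\F}$), and for $|t\xi|>K+2R_E$ the lower bound \eqref{hatapproxellipticity} gives $\nabla^2 \hat{\F}_\epsilon(x,t\xi)\geq I_n$. Splitting the $t$-integral at $t_0 = (K+2R_E)/|\xi|$ in the regime $|\xi|>K+2R_E$ therefore produces $\int_{t_0}^1 |\xi|^2\dt = |\xi|^2 - (K+2R_E)|\xi|$; combined with Young's inequality on the linear term $\langle \hat{\h}_\epsilon(x,0),\xi\rangle$ this yields the desired coercivity (the trivial regime $|\xi|\leq K+2R_E$ is absorbed in $C_1$).

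For the first right-hand side integral, the quadratic growth \eqref{quadraticgrowth} gives $|\hat{\h}_\epsilon(x,\xi)|\leq C(1+|\xi|)$ uniformly in $\epsilon$, so Cauchy--Schwarz and Young's inequality yield
\begin{equation*}
\Big|\int_{B_R}\langle \hat{\h}_\epsilon(x,Du_\epsilon),Du\rangle\,\dx\Big| \leq \tfrac{1}{8}\int_{B_R}|Du_\epsilon|^2\,\dx + C\int_{B_R}(1+|Du|^2)\,\dx.
\end{equation*}
For the datum term, since $u_\epsilon - u\in W^{1,2}_0(B_R)$, a combination of H\"older's inequality with the Sobolev embedding $W^{1,2}_0(B_R)\hookrightarrow L^{2n/(n-2)}(B_R)$ for $n\geq 3$ (resp.\ $W^{1,2}_0(B_R)\hookrightarrow L^q(B_R)$ for any $q<\infty$ when $n=2$), together with $\|f\|_{L^{2n/(n+2)}(B_R)}\leq C(n,|B_R|)\|f\|_{L^n(B_R)}$ and Young's inequality, produces
\begin{equation*}
\Big|\int_{B_R} f(u_\epsilon - u)\,\dx\Big| \leq \tfrac{1}{8}\int_{B_R}|Du_\epsilon|^2\,\dx + C\int_{B_R}|Du|^2\,\dx + C\|f\|_{L^n(B_R)}^2.
\end{equation*}

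Combining all three estimates, absorbing the $\tfrac14\int|Du_\epsilon|^2$ contributions into the left-hand side, and majorising the additive constant $C_1|B_R| + C\|f\|_{L^n(B_R)}^2$ by $C\int_{B_R}(1+|Du|^2)\,\dx$ (with a constant depending on the allowed data) yields the energy estimate. The real work is genuinely concentrated in the coercivity step, because $\xi\mapsto\hat{\F}_\epsilon(x,\xi)$ enjoys no global $p$-growth structure below the threshold $K+2R_E$; the argument succeeds precisely because the corrector $\Phi$ in \eqref{convexfunction} was designed to enforce quadratic growth and uniform convexity at infinity, with all bounds independent of $\epsilon$.
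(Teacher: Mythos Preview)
Your approach is correct and follows the same overall scheme as the paper: test \eqref{weakformapprox} with $u_\epsilon-u$, extract coercivity on the left, control the right via the quadratic growth \eqref{quadraticgrowth} and Sobolev, then absorb. The organisation of two sub-steps differs, and one of them needs a small repair to match the claimed constant dependence.

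For the coercivity, the paper does not derive a pointwise bound $\langle\hat\h_\epsilon(x,\xi),\xi\rangle\ge\tfrac12|\xi|^2-C_1$ directly; instead it applies the monotonicity Lemma~\ref{monotonicityapprox} (with $\hat\F$ in place of $\F$, via \eqref{hatfvoraussetzung}) on the set $\{|Du_\epsilon|_E\ge\tfrac32\}$, obtaining a lower bound proportional to $\int(|Du_\epsilon|_E-\tfrac32)_+^2$ with constant $\tilde C(C_\F,R_E,r_E)$, and then recovers $\int|Du_\epsilon|^2$ from $|\xi|^2\le C(R_E)\big((|\xi|_E-\tfrac32)_+^2+1\big)$. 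Your Hessian-integral route is equally valid, but as written you split the $t$-integral at the threshold $|t\xi|=K+2R_E$ coming from \eqref{hatapproxellipticity}, which makes your additive constant $C_1$ depend on $K=\|Du\|_{L^\infty(B_R)}$. Since $K$ is \emph{not} among the dependencies $(C_\F,\hat C_\F,\|f\|_{L^n(B_R)},n,R_E,r_E)$ asserted in the lemma, you cannot majorise $C_1|B_R|$ by $C\int_{B_R}(1+|Du|^2)\,\dx$ with an admissible $C$. The fix is immediate: split instead at $|t\xi|_E=\tfrac32$ and use \eqref{hatfvoraussetzung} with $\delta=\tfrac12$, giving $\int_{t_1}^1\hat\lambda(C_\F,\tfrac12)|\xi|^2\,\dt$ with $t_1=\tfrac{3}{2|\xi|_E}$; the leftover linear term is then bounded by $C(C_\F,R_E)|\xi|$ and the constant tracking is clean. (As an aside, $\hat\h_\epsilon(x,0)=0$ exactly, since $0\in\inn E$ forces $\nabla\F(x,0)=0$ and $\nabla\Phi(0)=0$.)

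For the datum term the paper pairs $\|f\|_{L^n}$ directly with $\|u_\epsilon-u\|_{L^{n/(n-1)}}$ via the embedding $W^{1,1}_0\hookrightarrow L^{n/(n-1)}$, whose constant is purely dimensional; your $W^{1,2}_0\hookrightarrow L^{2n/(n-2)}$ route also works once you observe that the factor $|B_R|$ arising from $\|f\|_{L^{2n/(n+2)}}\le |B_R|^{1/2}\|f\|_{L^n}$ is harmlessly absorbed as $|B_R|=\int_{B_R}1\,\dx$.
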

\begin{proof}

We start by testing the weak form~\eqref{weakformapprox} with~$\phi = u_\epsilon - u$. It can be verified through an approximation argument that this choice of~$\phi$ is indeed an admissible test function. By plugging~$\phi$ into~\eqref{weakformapprox}, there holds
\begin{align} \label{energytermeins}
    \int_{B_R} \langle \hat{\h}_\epsilon(x,Du_\epsilon), Du_\epsilon\rangle \,\dx
    &= \int_{B_R} \langle \hat{\h}_\epsilon(x,Du_\epsilon), Du\rangle \,\dx - \int_{B_R} f (u_\epsilon-u)\,\dx. 
\end{align}
In order to bound the term on the left-hand side further below, we note that due to Lemma~\ref{monotonicityapprox}, the quantity is non-negative. Thus, we pass to the subset of points~$B_R\cap\{|Du_\epsilon|_E \geq \frac{3}{2}\}$ and employ Lemma~\ref{monotonicityapprox} as well as~\eqref{betragminkowski}, where we recall~\eqref{hatfvoraussetzung}, to obtain that
\begin{align*}
    \int_{B_R}  \langle &\hat{\h}_\epsilon(x,Du_\epsilon), Du_\epsilon \rangle \,\dx \\
    &\geq  \int_{B_R} \bigg(\Tilde{C}(C_\F,R_E,r_E) \frac{2|Du_\epsilon|_E-\frac{5}{2}}{|Du_\epsilon|_E} |Du_\epsilon|^2 \bigchi_{ \{|Du_\epsilon|_E\geq \frac{3}{2}\} } +\epsilon|Du_\epsilon|^2 \bigg)\,\dx \\
    &\geq \int_{B_R} \bigg(\Tilde{C}(C_\F,R_E,r_E) \frac{2|Du_\epsilon|_E-\frac{5}{2}}{|Du_\epsilon|_E} (|Du_\epsilon|_E-\textstyle{\frac{3}{2}})^2_+ +\epsilon|Du_\epsilon|^2 \bigg)\,\dx \\
    & \geq \int_{B_R} (\Tilde{C}(C_\F,R_E,r_E) (|Du_\epsilon|_E-\textstyle{\frac{3}{2}})^2_+ +\epsilon|Du_\epsilon|^2 )\,\dx.
\end{align*}
Next, the first term on the right-hand side of the preceding inequality is bounded above with the Cauchy-Schwarz inequality,~\eqref{betragminkowski},
the gradient bound~\eqref{quadraticgrowth}, and also Young's inequality, by 
\begin{align} \label{energytermzwei}
    \int_{B_R}  \langle \hat{\h}_\epsilon(x,Du_\epsilon), Du \rangle \,\dx &\leq \int_{B_R}  \big(C (1+|Du_\epsilon|)|Du| +\epsilon|Du_\epsilon||Du| \big) \,\dx  \\
    &\leq \int_{B_R} \bigg( \mbox{$\frac{1}{4}$} \Tilde{C} (|Du_\epsilon|_E -\mbox{$\frac{3}{2}$})^2_+ +C|Du|^2 +C \bigg) \,\dx \nonumber \\
    &\quad + \mbox{$\frac{1}{2}$} \epsilon \int_{B_R} \big(|Du_\epsilon|^2\,\dx +|Du|^2 \big)\,\dx. \nonumber
\end{align}
with a constant~$C=C(C_\F,\hat{C}_\F,R_E,r_E)$. 
In order to treat the term involving the datum~$f$ in~\eqref{energytermeins}, we employ Hölder's inequality, Sobolev's inequality,~\eqref{betragminkowski}, and Young's inequality, which yields the bound
\begin{align} \label{energytermf}
    \bigg| \int_{B_R}  f(u_\epsilon-u)\,\dx \bigg| &\leq  \int_{B_R}  |f| |u_\epsilon-u| \,\dx  \\ 
    &\leq \|f\|_{L^n(B_R)}\bigg(\int_{B_R}|u_\epsilon-u|^{\frac{n}{n-1}}\,\dx\bigg)^{\frac{n-1}{n}} \nonumber \\
    &\leq C(n) \|f\|_{L^n(B_R)}\int_{B_R}|Du_\epsilon-Du|\,\dx \nonumber \\
    &\leq C(n) \|f\|_{L^n(B_R)}\int_{B_R} (|Du_\epsilon| + |Du| )\,\dx \nonumber \\
    &\leq C(n,R_E) \|f\|_{L^n(B_R)}\int_{B_R} (|Du_\epsilon|_E-\mbox{$\frac{3}{2}$})_+ + 1) \nonumber \\
    &\quad + C \|f\|_{L^n(B_R)}\int_{B_R}(1+|Du|^2)\,\dx \nonumber \\
    &\leq \mbox{$\frac{1}{4}$} \Tilde{C} \int_{B_R}  (|Du_\epsilon|_E-\mbox{$\frac{3}{2}$})^2_+ \,\dx + C(n,R_E) \|f\|_{L^n(B_R)}\int_{B_R}(1+ |Du|^2)\,\dx. \nonumber
\end{align}
Inserting the estimates~\eqref{energytermzwei} and~\eqref{energytermf} into~\eqref{energytermeins} and reabsorbing the quantities involving~$(|Du_\epsilon|_E-\mbox{$\frac{3}{2}$})^2_+$ into the left-hand side, we obtain 
\begin{align*} 
     \int_{B_R}  \big((|Du_\epsilon|_E-\mbox{$\frac{3}{2}$})^2_+ +\epsilon|Du_\epsilon|^2\big)\,\dx \leq  \epsilon C \int_{B_R} |Du|^2 \,\dx + C\|f\|_{L^n(B_R)}\int_{B_R}(1+ |Du|^2)\,\dx
\end{align*}
with~$C=C(C_\F,\hat{C}_\F,R_E,r_E,n)$. From this estimate, the claimed inequality is an immediate consequence due to~\eqref{betragminkowski}. 
\end{proof}


We continue by establishing the fact that~$u_\epsilon \in W^{2,2}_{\loc}(B_R)$, stated in the subsequent proposition.


\begin{myproposition} \label{approxregularityeins}
    Let~$\epsilon\in(0,1]$. Then, there holds
    \begin{equation*}
        u_\epsilon \in W^{2,2}_{\loc}(B_R).
    \end{equation*}
    Moreover, for any~$B_{2\rho}(x_0)\Subset B_R$ with~$\rho\in(0,1]$ there holds the quantitative~$W^{2,2}$-estimate
     \begin{equation} \label{approxw22}
        \int_{B_\frac{\rho}{4}(x_0)} |D^2u_\epsilon|^2\,\dx \leq \frac{C}{\epsilon^2\rho^2}\bigg(\int_{B_{2\rho}(x_0)}|Du_\epsilon|^2\,\dx + \rho^2 \int_{B_{2\rho}(x_0)} (1 + |f|^2)\,\dx \bigg)
    \end{equation}
    with~$C=C(C_\F,\hat{C}_\F,N,n,r_E)$.
\end{myproposition}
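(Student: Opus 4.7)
The plan is to apply the classical Nirenberg finite-difference-quotient argument to the approximating weak equation~\eqref{weakformapprox}, which is uniformly $\xi$-elliptic with ellipticity constant $\epsilon$ by virtue of the added quadratic term $\tfrac{\epsilon}{2}|\xi|^2$ in~\eqref{psifapprox} and whose diffusion additionally has quadratic growth~\eqref{quadraticgrowth} together with a Lipschitz-in-$x$ bound~\eqref{hatflipschitz} that is uniform in $\xi\in\R^n$. First, I would fix a ball $B_{2\rho}(x_0)\Subset B_R$ and a cutoff $\eta\in C_c^\infty(B_{\rho/2}(x_0))$ with $\eta\equiv 1$ on $B_{\rho/4}(x_0)$ and $|D\eta|\leq C/\rho$. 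For a coordinate direction $e_s$ and $|h|<\rho/4$, let $\Delta_{s,h}v(x)\coloneqq h^{-1}(v(x+he_s)-v(x))$ and test~\eqref{weakformapprox} with $\phi\coloneqq\Delta_{s,-h}(\eta^2\Delta_{s,h}u_\epsilon)$, which is admissible since $u_\epsilon\in W^{1,2}(B_R)$. After discrete integration by parts this yields
\begin{equation*}
 \int_{B_R}\big\langle\Delta_{s,h}\hat{\h}_\epsilon(\cdot,Du_\epsilon),\,D(\eta^2\Delta_{s,h}u_\epsilon)\big\rangle\,\dx
  =\int_{B_R}f\,\Delta_{s,-h}(\eta^2\Delta_{s,h}u_\epsilon)\,\dx.
\end{equation*}

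Next, I would decompose $\Delta_{s,h}\hat{\h}_\epsilon(x,Du_\epsilon(x))=T^\xi_h(x)+T^x_h(x)$, where $T^\xi_h$ captures the increment in the gradient slot (with $x$ shifted to $x+he_s$) and $T^x_h$ the increment in the spatial slot (with $Du_\epsilon(x)$ frozen). Paired with the principal factor $\eta^2\Delta_{s,h}Du_\epsilon$, the $\xi$-part produces, through the monotonicity
\begin{equation*}
 \big\langle\hat{\h}_\epsilon(y,\xi_1)-\hat{\h}_\epsilon(y,\xi_2),\,\xi_1-\xi_2\big\rangle\geq \epsilon\,|\xi_1-\xi_2|^2
\end{equation*}
inherited from the convexity of $\hat{\F}$ and the added quadratic term, the coercive lower bound $\epsilon\int\eta^2|\Delta_{s,h}Du_\epsilon|^2\,\dx$. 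The $T^x_h$-contribution against the same factor is controlled by~\eqref{hatflipschitz} and Young's inequality with weight $\epsilon$, producing a factor $1/\epsilon$. The $f$-term on the right-hand side is bounded via the standard estimate $\|\Delta_{s,-h}\phi\|_{L^2}\leq\|D\phi\|_{L^2}$ together with Hölder's and Young's inequalities, yielding a second factor $1/\epsilon$. The remaining cross terms against $2\eta D\eta\,\Delta_{s,h}u_\epsilon$ are treated analogously, exploiting the pointwise bound $|\hat{\h}_\epsilon|\leq C(1+|\xi|)$ from~\eqref{quadraticgrowth}, the standard difference-quotient estimate $\|\Delta_{s,h}u_\epsilon\|_{L^2(B_\rho)}\leq\|Du_\epsilon\|_{L^2(B_{2\rho})}$, and Lemma~\ref{energybound}. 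Reabsorbing the coercive-type quantities into the left-hand side yields an $h$-uniform bound of the form~\eqref{approxw22} for $\int_{B_{\rho/4}(x_0)}|\Delta_{s,h}Du_\epsilon|^2\,\dx$; sending $h\to 0$ and summing over $s=1,\ldots,n$ produces both $u_\epsilon\in W^{2,2}_{\loc}(B_R)$ and the claimed quantitative estimate.

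The main obstacle lies in the fact that $\hat{\F}$ is only $C^2$ on $\R^n\setminus\overline{E}$, so $\nabla^2\hat{\F}_\epsilon$ has no uniform upper bound as $\xi$ approaches $\partial E$ from outside (recall that $\hat{\Lambda}(\delta)$ in~\eqref{hatfvoraussetzung} may blow up as $\delta\downarrow 0$). Consequently the $\xi$-increment $T^\xi_h$ cannot be estimated pointwise by $|\nabla^2\hat{\F}_\epsilon|\,|\Delta_{s,h}Du_\epsilon|$, and the argument must rely exclusively on the convex-monotonicity estimate for the principal pairing and on the quadratic-growth bound~\eqref{quadraticgrowth} for the cross pairings. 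A technically clean implementation proceeds by first mollifying $\hat{\F}_\epsilon$ in the $\xi$-variable to obtain a smooth strongly convex approximant, carrying out the above Nirenberg scheme with constants independent of the mollification parameter, and passing to the limit via lower semicontinuity. This is what makes the dependence of~\eqref{approxw22} quantitative in $1/\epsilon^2$ while involving only the constants $C_\F$, $\hat{C}_\F$, $N$, $n$, $r_E$.
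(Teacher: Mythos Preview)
Your approach is essentially the same as the paper's: the Nirenberg difference-quotient test function, the decomposition of the increment of $\hat{\h}_\epsilon$ into a spatial part (controlled by~\eqref{hatflipschitz}) and a gradient part, the use of monotonicity $\langle\hat{\h}_\epsilon(y,\xi_1)-\hat{\h}_\epsilon(y,\xi_2),\xi_1-\xi_2\rangle\ge\epsilon|\xi_1-\xi_2|^2$ for the principal pairing, and the treatment of the $f$-term are all exactly what the paper does.

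The one place where your sketch is thin and the paper does something you do not spell out is the cross term $\int\langle\Delta_{s,h}\hat{\h}_\epsilon,\,2\eta D\eta\,\Delta_{s,h}u_\epsilon\rangle\,\dx$. Since $|\Delta_{s,h}\hat{\h}_\epsilon|$ admits no $h$-uniform pointwise bound (precisely because $\nabla^2\hat{\F}$ may blow up at $\partial E$), the growth bound on $\hat{\h}_\epsilon$ itself is not directly applicable here. The paper resolves this by performing a \emph{second} discrete integration by parts on this term, shifting the difference back onto $\eta D\eta\,\Delta_{s,h}u_\epsilon$ so that the integrand involves $\hat{\h}_\epsilon(x,Du_\epsilon)$ undifferenced, paired against $\Delta_{s,-h}\bigl[\eta D\eta\,\Delta_{s,h}u_\epsilon\bigr]$. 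Expanding the latter via a discrete product rule produces, in particular, a second-order difference $\Delta_{s,-h}\Delta_{s,h}u_\epsilon$; this is controlled by $\|\Delta_{s,h}Du_\epsilon\|_{L^2}$ and reabsorbed into the left-hand side via Young with weight $\epsilon$. Your phrase ``exploiting the pointwise bound $|\hat{\h}_\epsilon|\leq C(1+|\xi|)$'' suggests you may have this manoeuvre in mind, but it should be made explicit, since it is the one step where the argument genuinely departs from the textbook case with bounded Hessian. The mollification-in-$\xi$ alternative you propose does not sidestep the issue: the mollified Hessian bound degenerates as the mollification parameter vanishes, so the cross term for the mollified problem must still be handled by the same second integration by parts if the final estimate is to be uniform.
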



\begin{proof}
As usual, the existence of local second order weak derivatives of~$u_\epsilon$ in~$B_R$ will be achieved by employing the technique of difference quotients. We consider~$B_{2\rho}(x_0)\Subset B_R$ and~$h$ small enough, such that~$0<|h|<\text{dist}\big(B_{2\rho}(x_0),B_R\big)$. The finite difference of~$u_\epsilon$ for a.e.~$x\in B_{2\rho}(x_0)$ in direction~$x_i$ with increment~$h\neq 0$ is given by
$$\tau_{i,h}u_\epsilon(x)\coloneqq u_\epsilon(x+he_i)-u_\epsilon(x),$$
where~$e_i$ denotes the standard unit vector in direction~$x_i$ for an arbitrary~$i\in\{1,2,\ldots,n\}$. To simplify notation, we will write~$u=u_\epsilon$ and~$\tau_h u=\tau_{i,h}u_\epsilon(x)$, omitting the dependence on~$\epsilon\in(0,1]$ and on~$i\in\{1,2,\ldots,n\}$ respectively. We test the weak formulation~\eqref{weakformapprox} with~$\phi = \tau_{-h}[\zeta^2\tau_h u]$, where~$\zeta\in C^2_0(B_{\frac{\rho}{2}}(x_0),[0,1])$ is a smooth cut-off function that satisfies~$\zeta\equiv 1$ on~$B_{\frac{\rho}{4}}(x_0)$,~$|D\zeta|\leq \frac{8}{\rho}$, and~$|D^2\zeta| \leq \frac{64}{\rho^2}$. This choice of test function can be justified by an approximation argument. A change of variables, cf.~\cite[Chapter~5.8,~(10)]{evans2022partial}, yields a discrete integration by parts
\begin{align} \label{weakdifference}
    \int_{B_R} \zeta^2 \langle \tau_h \hat{\h}_\epsilon(x,Du), \tau_h Du \rangle \,\dx &+ 2\int_{B_R}  \langle  \tau_h \hat{\h}_\epsilon(x,Du), \tau_h u \zeta D\zeta  \rangle \,\dx   \\
    &= - \int_{B_R} f \tau_{-h}[\zeta^2\tau_h u]\,\dx, \nonumber
\end{align}
where we also utilized the commutativity of finite differences with weak derivatives. For the second term on the left-hand side in the preceding identity~\eqref{weakdifference}, we again use this integration by parts in the sense of changing variables and remove the finite difference from the vector field~$\hat{\h}_\epsilon$ by shifting it onto the quantity~$\zeta \tau_h u D\zeta$. This way, there holds
\begin{align} \label{weakdifferencetwice}
    \int_{B_R} \zeta^2 \langle \tau_h \hat{\h}_\epsilon(x,Du), \tau_h Du \rangle \,\dx & - 2\int_{B_R}  \langle \hat{\h}_\epsilon(x,Du), \tau_{-h}[\tau_h u \zeta D\zeta]  \rangle \,\dx   \\
    &= \int_{B_R} f \tau_{-h}[\zeta^2\tau_h u]\,\dx, \nonumber
\end{align}
We rewrite the finite difference~$\tau_h\hat{\h}_\epsilon(x,Du)$ in the first integral term in~\eqref{weakdifferencetwice} as follows
\begin{align*}
    \tau_h \hat{\h}_\epsilon(x,Du) &= \epsilon \tau_h Du(x) + \tau_h \hat{\h}(x,Du(x)) \\
    & = \epsilon \tau_h Du(x) + \hat{\h}(x+h e_i,Du(x+h e_i)) - \hat{\h}(x,Du(x))  \\
    & = \epsilon \tau_h Du(x) + \hat{\h}(x+h e_i,Du(x+h e_i)) - \hat{\h}(x,Du(x+h e_i)) \\
    &\quad + \hat{\h}(x,Du(x+h e_i)) - \hat{\h}(x,Du(x)) \\
    & \eqqcolon \foo{I} + \foo{II} + \foo{III}.
\end{align*}
 The quantities~$\foo{I}$ and~$\foo{III}$ remain unchanged. For the second term~$\foo{II}$, we exploit the Lipschitz continuity of~$\hat{\h}(x,\xi)$ with respect to the~$x$-variable. Thus, we exploit the bound~\eqref{hatflipschitz} and estimate the second quantity by
\begin{equation*}
   \foo{II} \leq C(\hat{C}_\F,N,r_E) |h|. 
\end{equation*}
In order to properly treat the second quantity in~\eqref{weakdifferencetwice} involving second order finite differences, we use the following product rule for finite differences
\begin{align*}
    \tau_{-h}[\tau_h u \zeta D\zeta](x) &= \tau_h u(x) \zeta(x) D\zeta(x) - \tau_h u(x-he_i) \zeta(x-he_i) D\zeta(x-he_i) \\
    &= \tau_{-h} \tau_h u(x) \zeta(x) D\zeta(x) + \tau_{-h}[\zeta D\zeta](x) \tau_hu(x-he_i) \\
    &= \tau_{-h} \tau_h u(x) \zeta(x) D\zeta(x) - \tau_{-h}[\zeta D\zeta](x) \tau_{-h}u(x)
\end{align*}
for a.e.~$x\in B_{2\rho}(x_0)$. Further, we apply the Cauchy-Schwarz inequality for this term as use the quadratic growth~\eqref{quadraticgrowth}. Taking modulus in the third integral term in~\eqref{weakdifferencetwice} involving the datum~$f$, we plug in our calculations from above. This way, we obtain
\begin{align*}
    & \underbrace{\int_{B_{R}} \zeta^2  \langle \hat{\h}(x,Du(x+he_i))  - \hat{\h}(x,Du(x)), Du(x+he_i)-Du(x) \rangle \,\dx}_{\ge 0} + 
    \epsilon\int_{B_{R}} \zeta^2 |\tau_h Du|^2 \,\dx \\
    &\quad\leq C |h| \int_{B_{\frac{\rho}{2}}(x_0)} \zeta^2 |\tau_h Du| \,\dx + \int_{B_{R}} |f| |\tau_{-h}[\zeta^2\tau_h u]|\,\dx \\
    &\quad\quad + C(\epsilon+1) \int_{B_{R}} (1+|Du|)|\tau_{-h}[\zeta D\zeta]| |\tau_{-h}u|\,\dx + C(\epsilon+1) \int_{B_{R}} (1+|Du|) \zeta |D\zeta| |\tau_{-h}\tau_h u| \,\dx \\ 
    &\quad \eqqcolon \Tilde{\foo{I}} + \Tilde{\foo{II}} + \Tilde{\foo{III}} + \Tilde{\foo{IV}}
\end{align*}
with a constant~$C=C(C_\F,\hat{C}_\F,N,r_E)$. We note that due to the second assertion of Lemma~\ref{monotonicityapprox}, the first term on the left-hand side above leads to a non-negative contribution and therefore can be discarded. 
We now treat each of the terms~$\Tilde{\foo{I}} - \Tilde{\foo{IV}}$ one after another and commence by estimating the first quantity further above by employing Young's inequality and by exploiting the fact that~$\zeta\leq 1$, which yields
\begin{align*}
    \Tilde{\foo{I}} &\leq \frac{\epsilon}{4} \int_{B_{R}} \zeta^2 |\tau_h Du|^2\,\dx + \frac{C}{\epsilon} |h|^2 |B_{\frac{\rho}{2}}(x_0)|.
\end{align*}
Next, we consider the term~$\Tilde{\foo{II}}$ containing the datum~$f$. Again, we apply the Cauchy-Schwarz inequality and Young's inequality, use the bounds for~$\zeta$ and~$D\zeta$, and also a standard estimate for finite differences that is taken from~\cite[Chapter~5.8,~Theorem~3]{evans2022partial}, to achieve
\begin{align*} \label{termf}
     \Tilde{\foo{II}} &\leq \frac{2}{\epsilon} |h|^2 \int_{B_{\rho}(x_0)} |f|^2\,\dx + \frac{\epsilon}{8|h|^2}\int_{B_{\frac{\rho}{2}}(x_0)} |\tau_{-h}[\zeta^2\tau_h u]|^2\,\dx  \\
    &\leq \frac{2}{\epsilon} |h|^2 \int_{B_{\rho}(x_0)} |f|^2\,\dx + \frac{\epsilon}{8}\int_{B_{\rho}(x_0)} \big|D[\zeta^2\tau_h u]\big|^2\,\dx \\
    &=\frac{2}{\epsilon}  |h|^2 \int_{B_{\rho}(x_0)} |f|^2\,\dx  + \frac{\epsilon}{8}\int_{B_\rho(x_0)} \big|\zeta^2 \tau_h Du + 2\zeta D\zeta \tau_h u\big|^2\,\dx  \\
    &\leq \frac{2}{\epsilon} |h|^2 \int_{B_{\rho}(x_0)} |f|^2\,\dx + \frac{\epsilon}{4}\int_{B_{R}} \zeta^2 |\tau_h Du|^2\,\dx  +  \frac{C \epsilon}{\rho^2}  |h|^2 \int_{B_{2\rho}(x_0)} |Du|^2
\end{align*}
with~$C=C(n)$. The third quantity~$\Tilde{\foo{III}}$ is bounded above by taking the essential supremum of~$Du$, applying Young's inequality, and then by proceeding in a similar fashion as for the second term
\begin{align*}
   \Tilde{\foo{III}} &\leq C(\epsilon+1)\rho^2 \int_{B_{\frac{\rho}{2}}(x_0)} |\tau_{-h}[\zeta D\zeta]|^2\,\dx + C \frac{(\epsilon+1)}{\rho^2} \int_{B_{\frac{\rho}{2}}(x_0)} |\tau_{-h}u|^2\,\dx \\
   &\leq C(\epsilon+1)\rho^2|h|^2 \int_{B_{\rho}(x_0)} |D[\zeta D\zeta]|^2\,\dx + C\frac{(\epsilon+1)}{\rho^2}|h|^2 \int_{B_{\rho}(x_0)} |Du|^2\,\dx \\
   &\leq C(\epsilon+1) \rho^2|h|^2 \int_{B_{\rho}(x_0)} (|D\zeta\otimes D\zeta|^2 + |D^2\zeta|^2)\,\dx + C\frac{(\epsilon+1)}{\rho^2}|h|^2 \int_{B_{\rho}(x_0)} |Du|^2\,\dx \\
   &\leq \frac{C(\epsilon+1)|h|^2}{\rho^2} |B_{\rho}(x_0)| + C\frac{(\epsilon+1)}{\rho^2}|h|^2 \int_{B_{\rho}(x_0)} |Du|^2\,\dx
\end{align*}
with~$C=C(C_\F,\hat{C}_\F,N,n,r_E)$. Finally, the fourth term~$\Tilde{\foo{IV}}$ is bounded above by Hölder's and Young's inequality, and also by exploiting the fact that finite differences and weak derivatives commutate, as follows
\begin{align*}
    \Tilde{\foo{IV}} &\leq C(\epsilon+1) \bigg( \int_{B_{\frac{\rho}{2}}(x_0)} (1+|Du|)^2 \zeta^2 |D\zeta|^2\,\dx \bigg)^{\frac{1}{2}} \bigg( \int_{B_{\frac{\rho}{2}}(x_0)} |\tau_{-h}\tau_h u|^2 \,\dx \bigg)^{\frac{1}{2}} \\
    &\leq C(\epsilon+1)|h| \bigg( \int_{B_{\frac{\rho}{2}}(x_0)} (1+|Du|)^2 \zeta^2 |D\zeta|^2\,\dx \bigg)^{\frac{1}{2}} \bigg( \int_{B_{\rho}(x_0)} |\tau_h Du|^2 \,\dx \bigg)^{\frac{1}{2}} \\
    &\leq \frac{C}{\rho^2} \frac{\epsilon+1}{\epsilon} |h|^2 \int_{B_{\frac{\rho}{2}}(x_0)} (1+|Du|)^2 \,\dx + \frac{\epsilon}{4} \int_{B_{R}} |\tau_h Du|^2 \,\dx.
\end{align*}
After reabsorbing all quantities from our estimates of the terms~$\Tilde{\foo{I}} - \Tilde{\foo{IV}}$ involving a finite difference~$\tau_h Du$ and using the fact that~$\epsilon,\rho\in(0,1]$, we end up with 
\begin{align*}
    \epsilon \int_{B_{R}} \zeta^2 |\tau_h Du|^2 \,\dx  
    \leq \frac{C|h|^2}{\epsilon \rho^2} \bigg( \int_{B_{2\rho}(x_0)} |Du|^2 \,\dx + \rho^2\int_{B_{2\rho}(x_0)} (1 + |f|^2)\,\dx \bigg)
\end{align*}
with a constant~$C=C(C_\F,\hat{C}_\F,N,n,r_E)$. For the second term on the left-hand side we exploit the choice of~$\zeta$ once more. Dividing both sides by~$\epsilon>0$ eventually leads to
\begin{equation*}
   \int_{B_{\frac{\rho}{4}}(x_0)} |\tau_h Du|^2 \,\dx \leq \frac{C}{\epsilon^2\rho^2}|h|^2\bigg( \int_{B_{2\rho}(x_0)} |Du|^2 \,\dx + \rho^2\int_{B_{2\rho}(x_0)} (1 + |f|^2)\,\dx\bigg)
\end{equation*}
with~$C=C(C_\F,\hat{C}_\F,N,n,r_E)$. As the term in brackets on the right-hand side of the preceding estimate is uniformly bounded with respect to~$|h|>0$ and~$i\in\{1,2,\ldots,n\}$ was chosen arbitrarily, we utilize another classical result concerning difference quotients and again refer to~\cite[Chapter~5.8,~Theorem~3]{evans2022partial}. This yields the local~$W^{2,2}$-estimate~\eqref{approxw22}.
\end{proof}


Next, we establish the regularity~$u_\epsilon\in W^{1,\infty}_{\loc}(B_R)$, which, in combination with Proposition~\ref{approxregularityeins}, yields that there holds
$$ u_\epsilon\in W^{1,\infty}_{\loc}(B_R) \cap W^{2,2}_{\loc}(B_R) $$
for any~$\epsilon\in(0,1]$. Moreover, we state a quantitative local~$L^\infty$-gradient bound for~$u_\epsilon$ in~$B_R$ that is taken from~\cite{brasco2011global}, which will turn out expedient when determining a local with respect to the parameter~$\epsilon\in(0,1]$ uniform~$L^\infty$-gradient bound for~$u_\epsilon$ in~$B_R$. 


\begin{myproposition} \label{approxregularityzwei}
    Let~$\epsilon\in(0,1]$. Then, there holds
    $$ u_\epsilon\in W^{1,\infty}_{\loc}(B_R) \cap W^{2,2}_{\loc}(B_R). $$
    Moreover, for any~$B_{2\rho}(x_0)\Subset B_R$ there holds the quantitative~$L^\infty$-gradient estimate
    \begin{equation} \label{approxlipschitz}
        \esssup\limits_{B_{\rho}(x_0)} |Du_\epsilon| \leq C \Bigg( \bigg( \fint_{B_{2\rho}(x_0)}|Du_\epsilon|^2 \,\dx \bigg)^{\frac{1}{2}} +1 \Bigg)
    \end{equation} 
   with~$C=C(C_\F,\hat{C}_\F,\|f\|_{L^{n+\sigma}(B_R)},N,n,R,R_E,r_E,\sigma)$. 
\end{myproposition}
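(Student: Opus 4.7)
The $W^{2,2}_{\loc}$-part is already provided by Proposition~\ref{approxregularityeins}, so the task reduces to proving the quantitative $L^\infty$-gradient estimate~\eqref{approxlipschitz}, from which $u_\epsilon \in W^{1,\infty}_{\loc}(B_R)$ follows immediately. My plan is a Moser iteration applied to the equation satisfied by the directional derivatives of $u_\epsilon$. Since $u_\epsilon \in W^{2,2}_{\loc}(B_R)$, the weak formulation~\eqref{weakformapprox} may be differentiated in a coordinate direction $e_s$, yielding that each $v_s \coloneqq \partial_{x_s} u_\epsilon$ is a weak solution in $B_R$ of the linear elliptic equation
\begin{equation*}
\int_{B_R} \bigl\langle \nabla^2\hat{\F}_\epsilon(x,Du_\epsilon)\, D v_s,\, D\psi \bigr\rangle \,\dx + \int_{B_R} \bigl\langle \partial_{x_s}\hat{\h}_\epsilon(x,Du_\epsilon),\, D\psi \bigr\rangle \,\dx = \int_{B_R} f\,\partial_{x_s}\psi \,\dx
\end{equation*}
for every $\psi \in C^\infty_0(B_R)$.

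I would then fix a level $k_0 \geq K + 2R_E$ so that on the super-level set $\{|Du_\epsilon| > k_0\}$ the convex add-on $\Phi$ guarantees via~\eqref{hatapproxellipticity} the uniform ellipticity bound $\langle \nabla^2\hat{\F}_\epsilon(x,\xi)\eta,\eta\rangle \ge |\eta|^2$, while~\eqref{hatapproxbound} and~\eqref{hatflipschitz} provide $\epsilon$-independent upper bounds on $\nabla^2\hat{\F}_\epsilon$ and $\partial_x\hat{\h}_\epsilon$. Setting $G \coloneqq |Du_\epsilon|^2$, testing the differentiated equation with $\zeta^2 v_s (G - k^2)_+^\gamma$ (for a standard cut-off $\zeta$ and $k \ge k_0$), summing over $s$, and combining with the Cauchy-Schwarz inequality produces a Caccioppoli-type estimate of the form
\begin{equation*}
\int_{B_R} \zeta^2 \bigl| D (G-k^2)_+^{(\gamma+1)/2} \bigr|^2 \,\dx \le C \int_{B_R} \bigl[ |D\zeta|^2 (G-k^2)_+^{\gamma+1} + \zeta^2 (1+|f|^2) (G - k^2)_+^{\gamma} \bigr] \,\dx,
\end{equation*}
with $C$ independent of $\epsilon$. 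Feeding this into Sobolev's embedding and iterating on exponents $\gamma_i = 2(\chi^i - 1)$, with the Sobolev gain factor $\chi = n/(n-2)$ for $n \ge 3$ and an adapted Trudinger-type replacement for $n = 2$, produces the desired $L^\infty$-bound on $B_\rho(x_0)$ in terms of the $L^2$-norm of $Du_\epsilon$ on $B_{2\rho}(x_0)$, with an additive constant.

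The main obstacle I foresee is two-fold: ensuring $\epsilon$-uniformity of the constant, which forces restricting the entire Moser scheme to the super-level set $\{|Du_\epsilon| > k_0\}$ where the $\epsilon$-perturbation is dominated by the $\Phi$-contribution built into $\hat{\F}_\epsilon$; and closing the iteration in the presence of the datum $f$. The strict integrability $f \in L^{n+\sigma}$ with $\sigma > 0$ is exactly what is needed here, since upon applying H\"older's inequality inside each iteration step the $f$-terms contribute geometric series with ratio strictly less than one, whose convergence yields the $L^\infty$-bound with the dependence on the data recorded in~\eqref{approxlipschitz}. Arguments of this type have been executed in detail in~\cite{brasco2011global, clop2020very} for equations structurally very close to~\eqref{approx}, and no conceptually new ideas are required beyond a careful tracking of constants tailored to the construction of $\hat{\F}_\epsilon$ performed in Section~\ref{sec:regularizing}.
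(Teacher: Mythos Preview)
Your proposal is correct and follows essentially the same route as the paper: both restrict to a super-level set of $|Du_\epsilon|$ where the construction of $\hat{\F}_\epsilon$ (via the add-on $\Phi$) furnishes $\epsilon$-independent uniform ellipticity, differentiate the equation, and then run a Moser-type iteration to pass from $L^2$ to $L^\infty$, with the strict integrability $f\in L^{n+\sigma}$ closing the scheme. The paper does not spell out the iteration but instead verifies that the structural hypotheses (G2)--(G3) and (2.6) of~\cite{brasco2011global} are met (with $q=2$ and threshold $M=K+R_E$, slightly smaller than your $k_0\ge K+2R_E$) and then invokes the $L^\infty$-gradient bound proved there in Section~4; your sketch simply unpacks that cited argument.
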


\begin{proof}
We observe that~$\hat{\F}_\epsilon$ satisfies the ellipticity and boundedness conditions outlined in~\eqref{hatfvoraussetzung}, with constants~$\hat{\lambda} = \hat{\lambda}(C_F, \delta)$ and~$\hat{\Lambda} = \hat{\Lambda}(C_F, \delta)$. Moreover, the estimate~\eqref{hatflipschitz} and also the quadratic growth properties~\eqref{quadraticgrowth} (compare also~\eqref{quadraticgrowthohnekonst}) are available for use. Our aim is to exploit the~$L^\infty$-gradient bound in~\cite[Section 4]{brasco2011global} right after inequality~\cite[Section 4, (4.12)]{brasco2011global}.

First and foremost, it is important to note that the $L^\infty$-gradient bound in~\cite[Section 4]{brasco2011global}, that is established throughout~\cite[Sections 3 - 4]{brasco2011global}, was proven within a more general framework than the one initially introduced in \cite[Section 2]{brasco2011global}, where the model equation involving the function $\mathcal{H}^*(\xi) \coloneqq \frac{1}{q} (|z| - \delta)^q$ for $\delta > 0$, $q > 1$, and $\xi \in \mathbb{R}^n$ is presented. Indeed, in the approximation process described in~\cite[Section 2.1]{brasco2011global}, a broader class of structures is considered, without imposing any specific quantitative form -- such as that of the model function~$\mathcal{H}^*$ -- on the approximating functions~$\mathcal{H}^*_\epsilon$. Instead, the construction is based solely on the structural conditions outlined in \cite[Section 2, (G1) - (G3) \& (2.6)]{brasco2011global}. Note that for our purposes the case $q=2$ is relevant.

Notably, condition \cite[(G1)]{brasco2011global} is only necessary to ensure the existence of second order weak derivatives of the approximating solutions~$u_\epsilon$ that are locally square-integrable. 
The existence of second order weak derivatives is an essential property for differentiating the equation as carried out in \cite[Sections 3 – 4]{brasco2011global}. However, thanks to Proposition~\ref{approxregularityeins} this property is available in our situation. Therefore, assumption \cite[(G1)]{brasco2011global} becomes negligible in our context. Instead, assumptions \cite[Section 2, (G2) - (G3) \& (2.6)]{brasco2011global}, along with the integrability condition $f \in L^{n+\sigma}(\Omega)$ for some $\sigma > 0$, are sufficient to derive the desired $L^\infty$-gradient bound. 
In particular, because of the quadratic growth of~$\hat{\F}_\epsilon$, the conditions~\cite[Section 2, (G2) - (G3)]{brasco2011global} are satisfied for the choice~$q=2$ with constants~$\lambda = \lambda(C_\F, R_E)$ and~$M = K+R_E$, while the Lipschitz condition~\cite[Section 2, (2.6)]{brasco2011global} holds with a Lipschitz constant~$L = L(\hat{C}_\F, N)$ respectively, an immediate consequence of conditions~\eqref{hatflipschitz},~\eqref{hatfvoraussetzung}, and also~\eqref{quadraticgrowth}.

Concerning the notation used by the author, the positive constant~$M$, introduced in~\cite[(2.4)]{brasco2011global}, can be chosen in accordance with the radius~$R_E > 0$ specified in~\eqref{mengeeradii}. Specifically, by setting~$M =  K+R_E$ in~\cite[(2.4)]{brasco2011global} as above, we have that
$$ B_R\cap \{w_\epsilon>k_0\} = B_R\cap \{|Du_\epsilon|>K+R_E\}, $$
where the auxiliary function~$w_\epsilon$ is defined as~$w_\epsilon \coloneqq (1 + |Du_\epsilon|^2)$, and the constant~$k_0$ is given by~$k_0 \coloneqq (1+M^2)$ in~\cite[(2.4)]{brasco2011global}. This choice of~$M$ ensures that the equation considered in~\cite[(3.1)]{brasco2011global} remains elliptic whenever the test function~$\phi=D_iu_\epsilon (w^s_\epsilon-k^s_0) \zeta^2$,~$i=1,\ldots,n$, with some integrability exponent~$s > 0$, and a standard smooth cut-off function~$\zeta \in C_0^1(B_{2\rho}(x_0))$ that is compactly supported in $B_{2\rho}(x_0)\Subset B_R$), is positive. In particular, this choice of~$M$ allows a differentiation of the weak form of the equation in \eqref{weakformapprox}, which follows from the fact that the partial mapping~$\xi\mapsto \hat{\F}_\epsilon(x,\xi)$ is~$C^2$-regular within the considered subset of points, since~$x\in B_R\cap \{w_\epsilon>k_0\}$ implies that~$|Du_\epsilon(x)|_E>1$. By following the steps outlined in~\cite[Sections 3 - 4]{brasco2011global} verbatim, we infer the desired~$L^\infty$-gradient estimate~\eqref{approxregularityzwei}, with a constant $C$ depending only on the given data but remaining independent of~$\epsilon\in(0,1]$. 
\end{proof}


The next lemma yields the convergence of the approximation~$\G_\delta(Du_\epsilon) \to \G_\delta(Du)$ in~$L^2(B_R,\R^n)$, an important property that will be required later on when determining the regularity of the limit function~$\G_\delta(Du)$.


\begin{mylem} \label{approxkvgzinl2}
    Let~$\delta,\epsilon\in(0,1]$ and~$u_\epsilon$ be the unique weak solution to~\eqref{approx}. Then, there holds
    \begin{equation*}
        \G_\delta(Du_\epsilon) \to \G_\delta(Du)\qquad\text{in~$L^2(B_R,\R^n)$ as~$\epsilon\downarrow 0$.}
    \end{equation*}
\end{mylem}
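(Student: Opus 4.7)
The plan is to exploit Lemma~\ref{lemgdeltakvgz} pointwise and control the right-hand side via the weak formulations of the two equations tested against $u_\epsilon-u \in W^{1,2}_0(B_R)$. Since $u$ is a local weak solution to~\eqref{pde} in $\Omega$ and since $\hat{\F}\equiv\F$ on $B_R\times B_K$ with $K=\|Du\|_{L^\infty(B_R)}$ (cf.~\eqref{schrankeu} and the construction of $\hat\F$), the function $u$ is also a weak solution of $\divv\hat{\h}(x,Du)=f$ in $B_R$. Moreover, Lemma~\ref{lemgdeltakvgz} applies verbatim with $\F$ replaced by $\hat\F$ and $\h_\epsilon$ replaced by $\hat\h_\epsilon$, because $\hat\F$ was shown to satisfy the same structure conditions~\eqref{fregularity} and ellipticity~\eqref{hatfvoraussetzung} that were the only ingredients entering its proof.

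Next, I would use $u_\epsilon - u \in W^{1,2}_0(B_R)$ as a test function in the weak formulations for $u_\epsilon$ and $u$. Admissibility is ensured by a standard approximation argument together with the quadratic growth~\eqref{quadraticgrowth} of $\hat\h_\epsilon$ and the fact that $Du,Du_\epsilon\in L^2(B_R)$. Subtracting the two resulting identities and writing $\hat\h_\epsilon(x,\xi)=\hat\h(x,\xi)+\epsilon\xi$, the datum $f$ drops out and yields
\begin{equation*}
    \int_{B_R}\langle \hat\h_\epsilon(x,Du_\epsilon)-\hat\h_\epsilon(x,Du),\, D(u_\epsilon-u)\rangle\,\dx = -\epsilon\int_{B_R}\langle Du,\, D(u_\epsilon-u)\rangle\,\dx.
\end{equation*}

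Now I would invoke Lemma~\ref{lemgdeltakvgz} (adapted to $\hat\F$ as above) pointwise with $\tildexi=Du_\epsilon(x)$ and $\xi=Du(x)$, which gives
\begin{equation*}
    |\G_\delta(Du_\epsilon)-\G_\delta(Du)|^2 \leq C(\delta,R_E,r_E)\,\langle \hat\h_\epsilon(x,Du_\epsilon)-\hat\h_\epsilon(x,Du),\, Du_\epsilon-Du\rangle.
\end{equation*}
Integrating this inequality over $B_R$ and combining it with the identity above produces
\begin{equation*}
    \int_{B_R}|\G_\delta(Du_\epsilon)-\G_\delta(Du)|^2\,\dx \leq C(\delta,R_E,r_E)\,\epsilon\left|\int_{B_R}\langle Du,\,D(u_\epsilon-u)\rangle\,\dx\right|.
\end{equation*}

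Finally, Cauchy--Schwarz yields
\begin{equation*}
    \int_{B_R}|\G_\delta(Du_\epsilon)-\G_\delta(Du)|^2\,\dx \leq C\epsilon\,\|Du\|_{L^2(B_R)}\bigl(\|Du_\epsilon\|_{L^2(B_R)}+\|Du\|_{L^2(B_R)}\bigr),
\end{equation*}
and the right-hand side tends to zero as $\epsilon\downarrow 0$ by virtue of the uniform $L^2$-bound on $Du_\epsilon$ provided by Lemma~\ref{energybound}. The only subtle point is the justification that $u_\epsilon-u$ is an admissible test function and that Lemma~\ref{lemgdeltakvgz} transfers to $\hat\F$; both are essentially bookkeeping, since $\hat\F$ and $\F$ share the relevant structural ingredients on the range of $Du$. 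No further obstruction arises.
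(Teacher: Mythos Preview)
Your proof is correct and follows essentially the same approach as the paper: test both weak formulations with $u_\epsilon-u$, subtract, and apply Lemma~\ref{lemgdeltakvgz} to control $|\G_\delta(Du_\epsilon)-\G_\delta(Du)|^2$ by the monotonicity term. The only cosmetic difference is in the final step: the paper retains the $\epsilon|Du_\epsilon-Du|^2$ contribution from Lemma~\ref{lemgdeltakvgz}, applies Young's inequality to $\epsilon\langle Du,Du-Du_\epsilon\rangle$, and reabsorbs---thereby avoiding any appeal to Lemma~\ref{energybound}---whereas you use Cauchy--Schwarz together with the uniform $L^2$-bound from Lemma~\ref{energybound}; both closures are equally valid.
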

\begin{proof}
Due to the definition of~$u_\epsilon$ and by omitting an additional approximation argument, we may test both weak forms~\eqref{weakform} and~\eqref{weakformapprox} with~$\phi=u_\epsilon-u$ and subtract them from each other to obtain the identity
\begin{align*} 
    \int_{B_R} \langle \hat{\h}_\epsilon(x,Du_\epsilon)-\hat{\h}_\epsilon(x,Du),Du_\epsilon-Du\rangle\,\dx &=  \epsilon \int_{B_R}  \langle Du, Du-Du_\epsilon\rangle \,\dx. 
\end{align*}
We now apply Lemma~\ref{lemgdeltakvgz} for~$\hat{\h}$ with~$\xi=Du$ and~$\tildexi= Du_\epsilon$, which is applicable due to~\eqref{hatfvoraussetzung}. Further, we employ Young's inequality, to achieve
\begin{align*}
   \epsilon &\int_{B_R}   |Du_\epsilon-Du|^2\,\dx + \int_{B_R}|\G_\delta(Du_\epsilon) - \G_\delta(Du)|^2\,\dx  \\
    &\leq C(C_\F,\delta,R_E,r_E) \int_{B_R} \langle \hat{\h}_\epsilon(x,Du_\epsilon)-\hat{\h}_\epsilon(x,Du), Du_\epsilon-Du \rangle \,\dx  \\
    &\leq  C(C_\F,\delta,R_E,r_E)\, \epsilon \int_{B_R} |Du||Du_\epsilon-Du|\,\dx \\  
    &\leq \epsilon \int_{B_R} |Du_\epsilon-Du|^2\,\dx +  C(C_\F,\delta,R_E,r_E) \, \epsilon \int_{B_R}|Du|^2\,\dx.  
\end{align*}
The first term on the right-hand side is reabsorbed into the left-hand side. Due to the fact that the constant~$C$ importantly is independent of~$\epsilon$, we may pass to the limit~$\epsilon\downarrow 0$ to obtain the claimed convergence.
\end{proof}


\subsection{Hölder continuity of~\texorpdfstring{$\G_\delta(Du_\epsilon)$}{}} \label{sec:holderapprox} 
Throughout this section, let~$B_R=B_R(y_0)\Subset\Omega$ denote the ball introduced in Section~\ref{sec:regularizing} and consider~$B_{2\rho}(x_0)\Subset B_R$. Due to Lemma~\ref{energybound} and the gradient bound~\eqref{approxlipschitz} from Proposition~\ref{approxregularityzwei}, we know that the family~$(Du_\epsilon)_{\epsilon\in(0,1]}$ is uniformly bounded with respect to the regularizing parameter~$\epsilon$ on~$B_{2\rho}(x_0)$, i.e. that there exists a positive constant~$\Tilde{M}<\infty$ depending on the data
\begin{align*}
   (C_\F,\hat{C}_\F,\|Du\|_{L^{2}(B_R)},\|f\|_{L^{n+\sigma}(B_R)},N,n,R,R_E,r_E,\sigma),
\end{align*}
but that is independent of the parameter~$\epsilon$, such that there holds
\begin{equation} \label{glmschranke} 
    \sup\limits_{\epsilon\in(0,1]} \|Du_\epsilon\|_{L^\infty(B_{2\rho}(x_0))} \leq \Tilde{M}. 
\end{equation}
Due to~\eqref{betragminkowski}, the preceding estimate implies the bound
\begin{equation} \label{minkowskischranke}
    \sup\limits_{\epsilon\in(0,1]} \esssup\limits_{B_{2\rho}(x_0)} |Du_\epsilon|_E \leq \mbox{$\frac{\Tilde{M}}{r_E}$} \eqqcolon M.
\end{equation}
In what follows, we assume that~$M\geq 3$. This can be rewritten in the following way
\begin{equation} \label{schrankeeins}
    \esssup\limits_{B_{2\rho}(x_0)} |Du_\epsilon|_E \leq 1+\delta+\mu
\end{equation}
for any~$\epsilon\in(0,1]$, for some parameter~$\mu>0$ satisfying
\begin{equation} \label{schrankezwei}
    1+\delta+\mu\leq M.
\end{equation}
Moreover, given any~$e^*\in \partial E^*$, we define the level set of~$\partial_{e^*} u_\epsilon$ on~$B_\rho(x_0)$ for some level parameter~$\nu\in(0,1)$ by
\begin{equation*}
    E^{\nu}_{e^*,\rho}(x_0) \coloneqq B_\rho(x_0)\cap \big\{ \partial_{e^*} u_\epsilon -(1+\delta) > (1-\nu)\mu \big\}.
\end{equation*}
For the remainder of the article, the following abbreviation
\begin{align} \label{hesseschrankedelta}
    C_\F(\delta) \coloneqq \max\{\|\nabla^2\hat{\F}\|_{L^\infty(B_R\times (\R^n\setminus E_\delta))}, 3C_\F+1\} < \infty,
\end{align}
where~$\delta\in(0,1]$ and we recall~\eqref{hatapproxbound}, will be employed.
\,\\

Our main result of this section establishes the local Hölder continuity of~$\G_\delta(Du_\epsilon)$ for any weak solution~$u_\epsilon$ to~\eqref{approx}, stated in the subsequent theorem.


\begin{mytheorem} \label{holdermainresult}
    Let $\delta,\epsilon\in(0,1]$ and $u_{\epsilon}$ be the unique weak solution to the Dirichlet problem~\eqref{approx}. Then, for any~$B_{2\rho}(x_0)\Subset B_R$ there holds
    $$\G_{\delta}(Du_{\epsilon}) \in C^{0,\alpha_\delta}(B_\rho(x_0),\R^n)$$
    with a Hölder exponent $\alpha_{\delta}\in(0,1)$ and a Hölder constant $C_{\delta}$ subject to the dependencies
    \begin{align*}
        \alpha_\delta &= \alpha_\delta(C_\F(\delta),\hat{C}_\F,\delta,\|f\|_{L^{n+\sigma}(B_R)},M,N,n,R,r_E,\sigma) \in(0,1), \\
        C_\delta &= C_\delta(C_\F(\delta),\hat{C}_\F,\delta,\|f\|_{L^{n+\sigma}(B_R)},M,N,n,R,R_E,r_E,\sigma) \geq 1,
    \end{align*}
    i.e. there holds
    \begin{equation} \label{gdeltaholderquant}
        |\G_{\delta}(Du_{\epsilon}(x)) - \G_{\delta}(Du_{\epsilon}(y))| \leq C_\delta |x-y|^{\alpha_\delta} \qquad \mbox{for a.e. $x,y\in B_\rho(x_0) \Subset B_R$}.
    \end{equation}
  
\end{mytheorem}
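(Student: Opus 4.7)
The plan is to prove Theorem~\ref{holdermainresult} by a two-regime alternative argument on a sequence of shrinking concentric balls, following the structure foreshadowed by Propositions~\ref{nondegenerateproposition} and~\ref{degenerateproposition}. Fix $B_{2\rho}(x_0)\Subset B_R$ and recall from~\eqref{glmschranke}--\eqref{schrankeeins} that $|Du_\epsilon|_E\le 1+\delta+\mu$ on $B_{2\rho}(x_0)$, where $\mu\le M$ measures the relevant excess above the degeneracy level $1+\delta$. Using the representation $|Du_\epsilon|_E=\sup_{e^*\in\partial E^*}\partial_{e^*}u_\epsilon$ from~\eqref{minkowskialternativ}, controlling $|Du_\epsilon|_E$ reduces to controlling the directional derivatives $\partial_{e^*}u_\epsilon$ uniformly in $e^*\in\partial E^*$. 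Since $u_\epsilon\in W^{2,2}_{\loc}(B_R)$ by Proposition~\ref{approxregularityeins}, one can differentiate the weak form~\eqref{weakformapprox} in direction $e^*$ to conclude that $\partial_{e^*}u_\epsilon$ is a local weak solution of the linear elliptic equation
\begin{equation*}
    \int_{B_R}\hat{\mathcal B}_\epsilon(x,Du_\epsilon)(D(\partial_{e^*}u_\epsilon),D\phi)\,\dx = -\int_{B_R}\bigl(\partial_{e^*}f\cdot\phi + \partial_{e^*}\hat{\mathcal H}(x,Du_\epsilon)\cdot D\phi\bigr)\,\dx,
\end{equation*}
where the bilinear form $\hat{\mathcal B}_\epsilon$ is degenerate only on $\{|Du_\epsilon|_E\le 1+\delta\}$ and uniformly elliptic, with constants depending on $\delta$, where $|Du_\epsilon|_E\ge 1+\delta$ (Lemma~\ref{bilinearelliptic}).

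For fixed $\nu\in(0,1)$ to be chosen and a threshold $\nu_0\in(0,1)$, declare the \emph{non-degenerate regime} at scale $\rho$ if there exists $e^*\in\partial E^*$ with $|E^{\nu}_{e^*,\rho}(x_0)|>\nu_0|B_\rho(x_0)|$, and the \emph{degenerate regime} otherwise. In the non-degenerate regime, Proposition~\ref{nondegenerateproposition} should be invoked: a De Giorgi-type expansion of positivity argument, applied to $\partial_{e^*}u_\epsilon$ (which is a super-solution on the set where $|Du_\epsilon|_E>1+\delta$), together with the measure-theoretic lower bound on $E^\nu_{e^*,\rho}(x_0)$, gives a quantitative pointwise lower bound $\partial_{e^*}u_\epsilon\ge 1+\delta+\nu_1\mu$ a.e.\ on $B_{\rho/2}(x_0)$ for some $\nu_1\in(0,1)$. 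This enforces $|Du_\epsilon|_E\ge 1+\delta+\nu_1\mu$ on $B_{\rho/2}(x_0)$, promoting the differentiated equation to a uniformly elliptic linear equation with bounded coefficients depending only on $\delta$ and the data. Classical excess-decay estimates from~\cite{gilbargtrudinger} then produce a Campanato-type decay $\osc_{B_r}\partial_{e^*}u_\epsilon \le C(r/\rho)^{\alpha}\osc_{B_\rho}\partial_{e^*}u_\epsilon + \text{datum term}$ for every $e^*$, hence for $|Du_\epsilon|_E$. Critically, the propagated lower bound on $|Du_\epsilon|_E$ persists on every smaller concentric ball, so once the non-degenerate regime is entered it is retained forever.

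In the degenerate regime, for every $e^*\in\partial E^*$ the level set $E^\nu_{e^*,\rho}(x_0)$ is small, which means $\partial_{e^*}u_\epsilon$ is bounded above by $1+\delta+(1-\nu)\mu$ on most of $B_\rho(x_0)$. Proposition~\ref{degenerateproposition} is then established by testing the linearized equation with truncations of $v_\epsilon=(\partial_{e^*}u_\epsilon-(1+\delta))_+^2$. Since $v_\epsilon$ is supported where $\hat{\mathcal B}_\epsilon$ is $\delta$-elliptic and since its support has small measure, a De Giorgi class estimate combined with Lemma~\ref{poincarelem} (Poincar\'e on the support) and the geometric iteration Lemma~\ref{geometriclem} yields
\begin{equation*}
    \sup_{B_{\rho/2}(x_0)}(\partial_{e^*}u_\epsilon-(1+\delta))_+ \le (1-\nu_2)\mu + C(\delta)\rho^{\gamma}
\end{equation*}
for some $\nu_2\in(0,1)$ and $\gamma>0$ independent of $e^*$. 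Taking the supremum over $e^*\in\partial E^*$ via~\eqref{minkowskialternativ} gives $\sup_{B_{\rho/2}(x_0)}(|Du_\epsilon|_E-(1+\delta))_+\le(1-\nu_2)\mu+C(\delta)\rho^\gamma$, i.e.\ the excess $\mu$ decays by a fixed factor at the next scale.

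The two cases combine via a standard nested-ball iteration: define $\rho_j=2^{-j}\rho$ and $\mu_j=\sup_{B_{\rho_j}(x_0)}(|Du_\epsilon|_E-(1+\delta))_+$. At each scale one is in exactly one of the two regimes; either one obtains uniform ellipticity henceforth (and classical H\"older decay) or the excess shrinks geometrically. In either case $\mu_j\le C\kappa^j(\mu+\rho^\gamma)$ for some $\kappa\in(0,1)$ depending on $\delta$, which translates into a Morrey-Campanato decay $\mu(r)\le C_\delta(r/\rho)^{\alpha_\delta}(\mu+\rho^\gamma)$. Lemma~\ref{gdeltalem}\,\eqref{est:gdeltalipschitzeins} finally converts this excess decay into a pointwise H\"older estimate for $\mathcal G_\delta(Du_\epsilon)$, with exponent $\alpha_\delta$ and constant $C_\delta$ depending on $\delta$ but \emph{not} on $\epsilon$, since all constants in Propositions~\ref{nondegenerateproposition} and~\ref{degenerateproposition} are $\epsilon$-uniform thanks to the ellipticity provided by the $\epsilon\,\mathrm{Id}$ perturbation being only supplementary and the lower bound from $|Du_\epsilon|_E\ge 1+\delta$ being the effective one. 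The main obstacle lies precisely in this $\epsilon$-uniformity: the De Giorgi estimate in the degenerate regime must rely solely on the $\delta$-part of the ellipticity constant, not on $\epsilon$, which forces all energy estimates on $v_\epsilon$ to be set up so that the $\epsilon$-contribution is either absorbed on the good side or produces only harmless, non-negative terms; organizing the computations to avoid any hidden $\epsilon^{-1}$ factor -- in particular when passing from directional information back to $|Du_\epsilon|_E$ through the supremum over the compact set $\partial E^*$ -- is the most delicate aspect of the argument.
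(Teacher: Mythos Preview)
Your proposal is correct and follows essentially the same route as the paper: a two-regime alternative on nested balls, with Proposition~\ref{nondegenerateproposition} (lower bound on $|Du_\epsilon|_E$ followed by classical excess-decay from \cite{gilbargtrudinger}) handling the non-degenerate case, Proposition~\ref{degenerateproposition} (De Giorgi iteration on $v_\epsilon=(\partial_{e^*}u_\epsilon-(1+\delta))_+^2$) handling the degenerate case, and the supremum over $e^*\in\partial E^*$ via~\eqref{minkowskialternativ} closing the loop. One small technical point the paper adds is to run the iteration for $\G_{2\delta}$ rather than $\G_\delta$, so that whenever the level $\mu_i$ drops below $\delta$ one has $\G_{2\delta}(Du_\epsilon)\equiv 0$ on $B_{\rho_i}(x_0)$ and the excess-decay is trivial; this cleanly disposes of the case where~\eqref{deltamu} fails at some step and avoids tracking a separate stopping condition.
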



In order to prove Theorem~\ref{holdermainresult}, we will distinguish between the~\textit{ non-degenerate regime} and the~\textit{degenerate regime}. The non-degenerate regime is characterized by assuming that the measure-theoretic information~$|B_\rho(x_0)\setminus E^\nu_{e^*,\rho}(x_0)|\geq \nu |B_\rho(x_0)|$ is satisfied for at least one~$e^*\in\partial E^*$, whereas in the degenerate regime the reverse inequality holds true for any~$e^*\in\partial E^*$. Put briefly, in the non-degenerate regime the subset of points where~$\partial_{e^*}u_\epsilon$ is near its supremum is large in measure, while in the degenerate regime the subset of points where~$\partial_{e^*}u_\epsilon$ far from its supremum is large in measure. In what follows, we simplify notation by setting
\begin{align} \label{beta}
   \beta = \frac{\sigma}{n+\sigma} = 1-\frac{n}{n+\sigma} \in (0,1). 
\end{align}
 The $L^2$-excess of~$Du_\epsilon$ on~$B_\rho(x_0)$ will be denoted by
\begin{align} \label{excess}
    \Phi(x_0,\rho) \coloneqq \fint_{B_\rho (x_0)} \big| Du_\epsilon-(Du_\epsilon)_{x_0,\rho} \big| ^2 \,\dx,
\end{align}
which plays a crucial role in the treatment of the non-degenerate regime.
\,\\


The first proposition treats the non-degenerate regime.


\begin{myproposition} \label{nondegenerateproposition}
    Let~$\delta,\epsilon\in(0,1]$,~$\mu>0$, and assume that
    \begin{equation} \label{deltamu}
        \delta<\mu. 
    \end{equation}
    There exist an exponent~$\alpha \in (0,1)$, a constant~$C\geq 1$, a parameter~$\nu\in(0,\frac{1}{4}]$, and a radius~$\Tilde{\rho}\in(0,1]$, subject to the dependencies
    \begin{align*}
     \alpha &=\alpha(C_\F(\delta),\hat{C}_\F,\delta,\|f\|_{L^{n+\sigma}(B_R)},M,N,n,r_E,\sigma), \\
     C &= C(C_\F(\delta),\hat{C}_\F,\delta,\|f\|_{L^{n+\sigma}(B_R)},M,N,n,R_E,r_E,\sigma), \\
      \nu &= \nu(C_\F(\delta),\hat{C}_\F,\delta,\|f\|_{L^{n+\sigma}(B_R)},M,N,n,R,r_E,\sigma), \\
        \Tilde{\rho} &=\Tilde{\rho}(C_\F(\delta),\hat{C}_\F,\delta,\|f\|_{L^{n+\sigma}(B_R)},M,N,n,r_E,\sigma),
    \end{align*}    
    such that: if the measure condition
    \begin{equation} \label{nondegeneratemeascond}
    |B_\rho(x_0)\setminus E^{\nu}_{e^*,\rho}(x_0)|<\nu|B_\rho(x_0)|
    \end{equation}
     is satisfied for at least one~$e^*\in \partial E^*$ on~$B_{\rho}(x_0)\subset B_{2\rho}(x_0)\Subset B_{R}$ with $\rho\leq\Tilde{\rho}$, then the limit
    \begin{equation} \label{lebesguerepresentant}
        \Gamma_{x_0}\coloneqq \lim\limits_{r\downarrow 0} (\G_{2\delta}(Du_\epsilon))_{x_0,r}
    \end{equation}
     exists. Furthermore, we have the excess-decay estimate
    \begin{equation} \label{excessgdelta}
        \fint_{B_r(x_0)}|\G_{2\delta}(Du_\epsilon)-\Gamma_{x_0}|^2\,dx \leq C\Big(\frac{r}{\rho} \Big)^{2\alpha} \mu^2
    \end{equation}
    for any $r\in(0,\rho]$, as well as the bound
    \begin{align} \label{lebesguebound}
        |\Gamma_{x_0}| \leq R_E\mu.
    \end{align}
\end{myproposition}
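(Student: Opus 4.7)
The strategy is the one outlined in Section~\ref{strategy}: use the measure-theoretic hypothesis~\eqref{nondegeneratemeascond} to upgrade to a pointwise lower bound $|Du_\epsilon|_E>1+\delta$ on $B_{\rho/2}(x_0)$ via a De Giorgi argument; on that smaller ball the linearization of~\eqref{approx} becomes uniformly elliptic with $\epsilon$-independent constants; classical excess-decay estimates for linear elliptic equations then yield~\eqref{excessgdelta} after a Lipschitz transfer from $Du_\epsilon$ to $\G_{2\delta}(Du_\epsilon)$.

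For the first step, I would differentiate~\eqref{weakformapprox} in the distinguished direction $e^*\in\partial E^*$ to obtain that $w_0\coloneqq \partial_{e^*}u_\epsilon$ is a weak solution of
\[
-\divv\bigl(\nabla^2\hat{\F}_\epsilon(x,Du_\epsilon)\,Dw_0\bigr) \;=\; \divv\bigl(\partial_x\hat{\h}_\epsilon(x,Du_\epsilon)\cdot e^*\bigr)+\partial_{e^*}f
\]
in the weak sense. Justified by~$u_\epsilon\in W^{2,2}_{\loc}(B_R)$ and by~\eqref{hatflipschitz}, the two right-hand-side pieces belong to appropriate Lebesgue spaces. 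Setting $w\coloneqq 1+\delta+\mu-w_0\geq 0$ on $B_\rho(x_0)$ by~\eqref{schrankeeins}, hypothesis~\eqref{nondegeneratemeascond} reads $|\{w\geq\nu\mu\}\cap B_\rho|<\nu|B_\rho|$. The crucial observation is that on the sub-level set $\{w<\mu\}=\{w_0>1+\delta\}$ we have $|Du_\epsilon|_E>1+\delta$, so by Lemma~\ref{bilinearelliptic} the coefficient matrix $\nabla^2\hat{\F}_\epsilon(x,Du_\epsilon)$ admits an \emph{$\epsilon$-uniform} ellipticity bound depending only on $\delta$ and $C_\F(\delta)$. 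Testing with truncations $\phi=(w-k)_+\zeta^2$ for levels $k\in[\nu\mu,\mu]$ keeps us strictly inside that set, since $D(w-k)_+$ vanishes where $w\leq k$. A standard Caccioppoli inequality with $\epsilon$-free constants results, the $L^{n+\sigma}$ forcing contributes a term proportional to $\rho^{2\beta}$ via Hölder and Sobolev embedding and is absorbed once $\rho\leq\tilde{\rho}$ is sufficiently small, and a De Giorgi iteration in the form of Lemma~\ref{geometriclem} upgrades measure smallness to the pointwise bound $w\leq\tfrac{\mu}{2}$ on $B_{\rho/2}(x_0)$, provided $\nu=\nu(\delta,M,\ldots)$ is chosen small enough. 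This gives $|Du_\epsilon|_E\geq w_0\geq 1+\delta+\tfrac{\mu}{2}>1+\delta$ on $B_{\rho/2}(x_0)$.

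With this lower bound at hand, specialize the above linearization to each coordinate direction $e_j$: each $\partial_j u_\epsilon$ solves a scalar linear elliptic equation on $B_{\rho/2}(x_0)$ whose coefficient matrix has ellipticity and $L^\infty$-bounds depending on $C_\F(\delta),\delta,M$ only. The classical excess-decay estimate for such equations with $L^{n+\sigma}$ right-hand sides (cf.~\cite[Ch.~8]{gilbargtrudinger} combined with a Campanato-type iteration) yields
\[
\Phi(x_0,r)\;\leq\;C\Bigl(\tfrac{r}{\rho}\Bigr)^{2\alpha}\Phi(x_0,\rho/2)+C\,r^{2\alpha}\mu^2
\]
for all $r\leq\rho/2$, after further shrinking $\tilde{\rho}$. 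Combining with $\Phi(x_0,\rho/2)\leq C\mu^2$ (immediate from~\eqref{schrankeeins}) gives the $\mu^2$-scaled decay for $\Phi$. The uniform Lipschitz bound~\eqref{est:gdeltalipschitzeins} of Lemma~\ref{gdeltalem} transfers this decay directly to $\G_{2\delta}(Du_\epsilon)$, yielding~\eqref{excessgdelta}. Telescoping the averages $(\G_{2\delta}(Du_\epsilon))_{x_0,r_k}$ along $r_k=2^{-k}\rho$ then produces a Cauchy sequence whose limit defines the Lebesgue representative $\Gamma_{x_0}$ of~\eqref{lebesguerepresentant}, while~\eqref{lebesguebound} follows from the pointwise estimate $|\G_{2\delta}(\xi)|\leq R_E(|\xi|_E-(1+2\delta))_+$ (using~\eqref{betragminkowski}) combined with $|Du_\epsilon|_E\leq 1+\delta+\mu$, which passes to the limit.

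The main obstacle is the De Giorgi step: outside the non-degenerate regime the linearized equation is only $\epsilon$-uniformly elliptic with the vanishing constant $\epsilon$, so every quantitative estimate must be confined to the region $\{|Du_\epsilon|_E>1+\delta\}$. Engineering the truncation $(w-k)_+$ at levels $k\geq\nu\mu$ to be supported precisely there, while simultaneously tuning $\nu$ small enough—depending on the data via $\delta$, $C_\F(\delta)$ and $M$—to make the iteration close despite the $L^{n+\sigma}$-forcing, is the delicate point. The role of the hypothesis $\delta<\mu$ in~\eqref{deltamu} is to guarantee that $\mu$ controls any scale-dependence introduced by $\delta$; once this is done, the subsequent linear excess decay and the transfer to $\G_{2\delta}$ are standard.
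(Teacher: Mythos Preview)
Your overall architecture matches the paper's: a De Giorgi step to obtain the pointwise lower bound $|Du_\epsilon|_E\geq 1+\delta+\frac{\mu}{4}$ on $B_{\rho/2}(x_0)$ (Proposition~\ref{lowerboundprop}), then linear elliptic excess decay from \cite{gilbargtrudinger} (Proposition~\ref{lem:gilbargtrudinger} and Lemma~\ref{excessdecay}), then the Lipschitz transfer via Lemma~\ref{gdeltalem} and a telescoping argument. Steps two through four are fine.

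The De Giorgi step, however, has a genuine gap. You set $w=1+\delta+\mu-\partial_{e^*}u_\epsilon$ and propose testing with $\phi=(w-k)_+\zeta^2$ for levels $k\in[\nu\mu,\mu]$, claiming this ``keeps us strictly inside'' the elliptic region $\{w<\mu\}=\{\partial_{e^*}u_\epsilon>1+\delta\}$. But $(w-k)_+$ is supported on $\{w>k\}=\{\partial_{e^*}u_\epsilon<1+\delta+\mu-k\}$, which for every $k$ in your range \emph{contains} the degenerate set $\{\partial_{e^*}u_\epsilon\leq 1+\delta\}$ rather than avoiding it. On that set the coefficient matrix $\nabla^2\hat{\F}_\epsilon(x,Du_\epsilon)$ has only the $\epsilon$-dependent lower bound, so your Caccioppoli inequality does not hold with $\epsilon$-free constants. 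The iteration cannot close as written.

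The paper's remedy (see Step~2 in the proof of Proposition~\ref{lowerboundprop}) is a \emph{two-sided} truncation
\[
\phi_{e^*}=\zeta^2\,(\partial_{e^*}\tilde u-(1+\delta))_+\,(\partial_{e^*}\tilde u-\kappa)_-,\qquad \kappa\in(1+\delta,1+\delta+\mu).
\]
The factor $(\partial_{e^*}\tilde u-(1+\delta))_+$ forces the support into $\{\partial_{e^*}\tilde u>1+\delta\}\subset\{|Du_\epsilon|_E>1+\delta\}$, where Lemma~\ref{bilinearelliptic} applies with constants depending only on $\delta$ and $C_\F(\delta)$; the factor $(\partial_{e^*}\tilde u-\kappa)_-$ captures the sublevel set needed for the shrinking. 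After the Caccioppoli estimate~\eqref{lowerboundee} one iterates over two sequences of levels $\delta_m\uparrow 1+\delta+\tfrac{\mu}{8}$ and $\kappa_m\downarrow 1+\delta+\tfrac{\mu}{4}$ together with Lemma~\ref{poincarelem} and Lemma~\ref{geometriclem}, obtaining $|\{1+\delta+\tfrac{\mu}{8}\leq \partial_{e^*}\tilde u\leq 1+\delta+\tfrac{\mu}{4}\}\cap B_{1/2}|=0$. The conclusion $\partial_{e^*}u_\epsilon\geq 1+\delta+\tfrac{\mu}{4}$ then follows not from the iteration alone but from an additional absolute-continuity-on-lines argument (via $u_\epsilon\in W^{2,2}_{\loc}$) ruling out the alternative. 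Your one-sided scheme misses both the ellipticity confinement and this final dichotomy argument.
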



In what follows, let~$\alpha\in(0,1)$ denote the parameter from Proposition~\ref{nondegenerateproposition} and~$0<\Tilde{\beta}<\alpha<\beta$, where~$\beta$ is given in~\eqref{beta}. The second proposition deals with the degenerate regime.
\begin{myproposition} \label{degenerateproposition}
    Let $\delta,\epsilon\in(0,1]$, $\mu>0$, and $\nu\in(0,\frac{1}{4}]$. Assuming that~\eqref{deltamu} holds true, 
    there exist a parameter $\kappa\in\big[2^{-\frac{\Tilde{\beta}}{2}},1\big)$ and a radius~$\hat{\rho}\in(0,1)$, both depending on the data 
    \begin{align*}
       (C_\F(\delta),\hat{C}_\F,\delta,M,\|f\|_{L^{n+\sigma}(B_R)},N,n,\nu,R,r_E,\sigma),
    \end{align*}
     such that: if the measure condition
    \begin{equation} \label{degeneratemeascond}
                |B_\rho(x_0)\setminus E^{\nu}_{e^*,\rho}(x_0)|\geq\nu|B_\rho(x_0)|
    \end{equation}
         is satisfied for any~$e^*\in \partial E^*$ on~$B_\rho(x_0)\subset B_{2\rho}(x_0)\Subset B_{R}$ with~$\rho\leq \hat{\rho}$, then there holds
    \begin{equation} \label{degenerateestzwei}
        \esssup\limits_{B_{\frac{\rho}{2}}(x_0)}\,|\G_{\delta}(Du_\epsilon)|_E \leq \kappa \mu.
    \end{equation}
\end{myproposition}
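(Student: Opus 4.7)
The strategy is to reduce \eqref{degenerateestzwei} to a pointwise bound on directional derivatives. A brief homogeneity computation using the definition of $\G_\delta$ gives $|\G_\delta(\xi)|_E = (|\xi|_E - (1+\delta))_+$ for every $\xi \in \R^n$; combined with the representation \eqref{minkowskialternativ}, the claim \eqref{degenerateestzwei} becomes
\begin{equation*}
\sup_{e^* \in \partial E^*}\, \esssup_{B_{\rho/2}(x_0)} \bigl(\partial_{e^*} u_\epsilon - (1+\delta)\bigr)_+ \leq \kappa\mu.
\end{equation*}
I will fix $e^* \in \partial E^*$ and apply a De~Giorgi-type level-set iteration to $w := \partial_{e^*} u_\epsilon$, producing a reduction $\tilde\kappa < 1$ uniform in $e^*$. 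The required lower bound $\kappa \geq 2^{-\tilde\beta/2}$ is met by setting $\kappa := \max\bigl\{\tilde\kappa,\, 2^{-\tilde\beta/2}\bigr\}$.

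The second-order regularity of Proposition~\ref{approxregularityeins} justifies differentiating \eqref{weakformapprox} in direction $e^*$ via difference quotients. In the distributional sense, $w$ thereby solves the linearised equation
\begin{equation*}
\int_{B_R} \hat\B_\epsilon(x, Du_\epsilon)(Dw, D\psi) \,\dx + \int_{B_R} \partial_{e^*} \hat\h(x, Du_\epsilon) \cdot D\psi \,\dx = \int_{B_R} f\, \partial_{e^*}\psi \,\dx,
\end{equation*}
with $\hat\B_\epsilon$ interpreted a.e. on $\R^n \setminus \partial E$. For a level $k \in [1+\delta, 1+\delta+\mu]$, balls $B_r(x_0) \subset B_{\tilde r}(x_0)$, and a cutoff $\zeta \in C^\infty_0(B_{\tilde r}(x_0))$ with $0 \leq \zeta \leq 1$, the test function $\psi := (w - k)_+ \zeta^2$ is supported in $\{w > k\} \subset \{|Du_\epsilon|_E > 1+\delta\}$, where the ellipticity estimate \eqref{hatfvoraussetzung} supplies $\hat\B_\epsilon(x, Du_\epsilon)(\eta, \eta) \geq \hat\lambda(C_\F(\delta), \delta)|\eta|^2$. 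Combined with the Lipschitz estimate \eqref{hatflipschitz} for $\partial_x \hat\h$, Young's inequality, and Hölder's inequality on the $f$-term (using $f \in L^{n+\sigma}$), this yields a Caccioppoli-type estimate of the schematic form
\begin{equation*}
\int_{B_r} |D(w - k)_+|^2 \zeta^2 \,\dx \leq \frac{C}{(\tilde r - r)^2} \int_{B_{\tilde r}} (w - k)_+^2 \,\dx + C |A(k, \tilde r)| + C \|f\|_{L^{n+\sigma}(B_R)}^2 |A(k, \tilde r)|^{1 - \frac{2}{n+\sigma}},
\end{equation*}
with $A(k, r) := B_r(x_0) \cap \{w > k\}$ and $C = C(C_\F(\delta), \hat{C}_\F, \delta, M, N, n, r_E)$.

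The degenerate measure assumption \eqref{degeneratemeascond} together with the bound $w - (1+\delta) \leq \mu$ provides $|A(k_0, \rho)| \leq (1 - \nu)|B_\rho(x_0)|$ at $k_0 := 1 + \delta + (1-\nu)\mu$. Introducing nested sequences
\begin{equation*}
k_j := 1 + \delta + \bigl(1 - \tfrac{\nu}{2}\bigr)\mu - \tfrac{\nu}{2}\mu\, 2^{-j}, \qquad \rho_j := \tfrac{\rho}{2}(1 + 2^{-j}),
\end{equation*}
and writing $Y_j := \mu^{-2}\, \fint_{B_{\rho_j}(x_0)} (w - k_j)_+^2 \,\dx$, the Caccioppoli estimate applied with $r = \rho_{j+1},\, \tilde r = \rho_j$, combined with the Sobolev--Poincaré-type inequality of Lemma~\ref{poincarelem}, produces a recursion of the schematic form
\begin{equation*}
Y_{j+1} \leq C\, b^j\, Y_j^{1 + \tau} + C\, \Bigl(\frac{\rho}{\mu}\Bigr)^{2\beta}\, \|f\|_{L^{n+\sigma}(B_R)}^{2},
\end{equation*}
with $\tau = \tau(n) > 0$, $b = b(n) > 1$, $\beta$ as in \eqref{beta}, and $C = C(C_\F(\delta), \hat{C}_\F, \delta, M, N, n, \nu, r_E)$. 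The principal obstacle is absorbing the two source contributions: the datum $f$ produces the additive error $(\rho/\mu)^{2\beta}\|f\|_{L^{n+\sigma}}^{2}$, and the $x$-dependence of $\hat\h$ a similar $\rho$-controlled correction. Both are forced below the smallness threshold required by Lemma~\ref{geometriclem} by selecting $\hat\rho = \hat\rho(C_\F(\delta), \hat{C}_\F, \delta, M, \|f\|_{L^{n+\sigma}(B_R)}, N, n, \nu, R, r_E, \sigma)$ sufficiently small; the constant residual is absorbed by a marginal upward shift of the limiting level. Lemma~\ref{geometriclem} then yields $Y_j \to 0$, giving $w \leq 1 + \delta + (1 - \nu/2)\mu$ a.e. on $B_{\rho/2}(x_0)$ and completing the proof with $\tilde\kappa = 1 - \nu/2$.
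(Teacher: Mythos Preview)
Your reduction of \eqref{degenerateestzwei} to a uniform bound on $(\partial_{e^*}u_\epsilon-(1+\delta))_+$ and the Caccioppoli inequality you derive are both correct, and they match what the paper obtains (there for $v_\epsilon=(\partial_{e^*}u_\epsilon-(1+\delta))_+^2$; see Lemmas~\ref{mainintegralin}--\ref{degiorgilem}). The gap is that your single De~Giorgi iteration cannot be initialised. The only information you have at level $k_0=1+\delta+(1-\nu)\mu$ is the pointwise bound $(w-k_0)_+\leq\nu\mu$, giving $Y_0\leq\nu^2$; the measure hypothesis \eqref{degeneratemeascond} adds only $|A(k_0,\rho)|\leq(1-\nu)|B_\rho|$, which for $\nu\leq\tfrac14$ is almost the full ball and contributes nothing useful. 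In your recursion $Y_{j+1}\leq Cb^jY_j^{1+\tau}$ the Chebyshev step at each level divides by $(k_{j+1}-k_j)^2\sim\nu^2\mu^2 4^{-j}$, so the effective constant is $C\sim\nu^{-4/n}$ (with $\tau=2/n$). The smallness threshold of Lemma~\ref{geometriclem} is then $Y_0\leq C^{-1/\tau}b^{-1/\tau^2}=c(n)\,\nu^2$ for a \emph{strictly smaller} universal constant $c(n)<1$; the bound $Y_0\leq\nu^2$ misses it by exactly this fixed factor, for every $\nu$. Shrinking $\hat\rho$ only kills the additive $\rho^\beta$-errors and does nothing to this homogeneous deficit.

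What is missing is a measure-shrinking step between \eqref{degeneratemeascond} and the iteration. The paper supplies it in Lemma~\ref{degiorgilemzwei}: starting from $|\{w\leq k_0\}\cap B_\rho|\geq\nu|B_\rho|$ and iterating the De~Giorgi isoperimetric inequality $i_*$ times (using the Caccioppoli estimate at intermediate levels), one obtains that the measure of the super-level set $\{v_\epsilon>(1-2^{-i_*}\nu)\mu^2\}$ is at most $C(\nu\sqrt{i_*})^{-1}|B_\rho|$, which can be made as small as desired by choosing $i_*$ large. Only after this does the De~Giorgi iteration of Lemma~\ref{degiorgilemeins} run, now with an initial measure below the universal threshold. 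Your argument needs exactly such an intermediate expansion-of-positivity lemma; without it the iteration does not close.
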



We postpone the proof of Proposition~\ref{nondegenerateproposition} and Proposition~\ref{degenerateproposition} for the moment and treat them later on in Sections~\ref{sec:nondegenerate} and~\ref{sec:degenerate}.


\begin{proof}[\textbf{\upshape Proof of Theorem~\ref{holdermainresult}}]
The proof of Theorem~\ref{holdermainresult} follows by combining the results of Propositions~\ref{nondegenerateproposition} and~\ref{degenerateproposition}. Roughly speaking, it builds upon a strategy in the spirit of DiBenedetto and Friedman's~\cite{friedman1984regularity,dibenedetto1985addendum,Friedman1985} and also Uhlenbeck's~\cite{uhlenbeck1977regularity} pioneering results on the Hölder continuity of the gradient of solutions to elliptic resp. parabolic~$p$-Laplacian systems. We also mention Manfredi's contribution in the elliptic setting~\cite{manfredi1986regularity}. The aim is to show that the Lebesgue representative~$\Gamma_{x_0}$, as defined in~\eqref{lebesguerepresentant}, exists and is locally Hölder continuous in~$B_R$. Let
$$ \nu=\nu(C_\F(\delta),\hat{C}_\F,\delta,\|f\|_{L^{n+\sigma}(B_R)},M,N,n,R,r_E,\sigma) \in(0,\mbox{$\frac{1}{4}$} ]$$
denote the parameter and
$$ \Tilde{\rho} =\Tilde{\rho}(C_\F(\delta),\hat{C}_\F,\delta,\|f\|_{L^{n+\sigma}(B_R)},M,N,n,r_E,\sigma)\in(0,1] $$
denote the radius from Proposition~\ref{nondegenerateproposition}, whereas
$$ \hat{\rho}=\hat{\rho}(C_\F(\delta),\hat{C}_\F,\delta,\|f\|_{L^{n+\sigma}(B_R)},M,N,n,R,r_E,\sigma) \in(0,1) $$
represents the radius from Proposition~\ref{degenerateproposition}, being now chosen for the above specific determination of~$\nu$. Further, let
$$ C=C(C_\F(\delta),\hat{C}_\F,\delta,\|f\|_{L^{n+\sigma}(B_R)},M,N,n,R_E,r_E,\sigma) \geq 1 $$
denote the constant from Proposition~\ref{nondegenerateproposition} and
$$ \kappa = \kappa(C_\F(\delta),\hat{C}_\F,\delta,\|f\|_{L^{n+\sigma}(B_R)},M,N,n,R,r_E,\sigma) \in \big[2^{-\frac{\Tilde{\beta}}{2}},1\big) $$
denote the parameter from Proposition~\ref{degenerateproposition}, again for the above choice of parameter~$\nu\in(0,\frac{1}{4}]$. Here, as before, by~$\alpha\in(0,1)$ we denote the parameter from Proposition~\ref{nondegenerateproposition} and
$$0<\Tilde{\beta}<\alpha<\beta.$$ 
For the following discussion, let
$$\rho^* \coloneqq \min\{\Tilde{\rho},\hat{\rho}\}\in(0,1)$$
and consider a ball~$B_{2\rho}(x_0) \Subset B_{R}$ with radius~$\rho\in(0,\rho^*]$. Further, we let~$\mu\coloneqq M-(1+\delta)>0$. We prove that~$\G_{\delta}(Du_\epsilon)$ is Hölder continuous in~$B_\rho(x_0)\Subset B_R$ with Hölder exponent
$$\alpha_\delta\coloneqq - \frac{\log \kappa}{\log 2} \in \big(0,\mbox{$\frac{\Tilde{\beta}}{2}$} \big]$$
and Hölder constant~$C_\delta\geq 1$. This way, there holds
$$ \alpha_\delta = \alpha_\delta(C_\F(\delta),\hat{C}_\F,\delta,\|f\|_{L^{n+\sigma}(B_R)},M,N,n,R,r_E,\sigma) \in (0,1) $$
and
$$ C_\delta = C_\delta(C_\F(\delta),\hat{C}_\F,\delta,\|f\|_{L^{n+\sigma}(B_R)},M,N,n,R,R_E,r_E,\sigma)\geq 1. $$
For technical reasons, we will actually prove the Hölder continuity of~$\G_{2\delta}(Du_\epsilon)$, from which the Hölder continuity of~$\G_\delta(Du)$ follows immediately due to the arbitrariness of~$\delta\in(0,1]$ by substituting~$\Tilde{\delta}\coloneqq \frac{\delta}{2}$. \,\\

We now show that
$$  \Gamma_{x_0}\coloneqq \lim\limits_{r\downarrow 0} (\G_{2\delta}(Du_\epsilon))_{x_0,r} $$
exists, and that there holds the quantitative~$L^2$ excess-decay estimate
\begin{align} \label{proofexcess}
     \fint_{B_r(x_0)}|\G_{2\delta}(Du_\epsilon)-\Gamma_{x_0}|^2\,dx \leq C \Big(\frac{r}{\rho} \Big)^{2\alpha_\delta} \mu^2
\end{align}
for any~$r\in(0,\rho]$, with a constant~$C\geq 1$ depending on the data
$$ (C_\F(\delta),\hat{C}_\F,\delta,\|f\|_{L^{n+\sigma}(B_R)},M,N,n,R,R_E,r_E,\sigma). $$
We consider a sequence of nested balls~$B_{\rho_i}(x_0)$ around~$x_0$, where
$$\rho_i\coloneqq \frac{\rho}{2^i}\qquad\mbox{for any~$i\in\N_0$}.$$
Moreover, we define levels
$$\mu_i\coloneqq \kappa^i\mu\in(0,\mu] \qquad\mbox{for any~$i\in\N_0$.}$$ According to this construction and the above choice of~$\alpha_\delta$, there holds
\begin{align} \label{levelest}
    \mu_i \leq 2^{-\alpha_\delta i}\mu = \Big(\frac{\rho_i}{\rho}\Big)^{\alpha_\delta} \mu \qquad \mbox{for any~$i\in\N_0$.}
\end{align}
We begin the iteration process by assuming that~\eqref{degeneratemeascond} holds true on~$B_{\rho_{0}}(x_0)=B_\rho(x_0)$ for any~$e^*\in\partial E^*$ to level~$\mu_0=\mu$, i.e. the degenerate regime applies. Then, if there holds~$\delta < \mu$, an application of Proposition~\ref{degenerateproposition} yields that 
   \begin{equation*} 
        \esssup\limits_{B_{\rho_1}(x_0)}\,|\G_{\delta}(Du_\epsilon)|_E \leq \kappa \mu = \mu_1.
    \end{equation*}
If~$\mu\leq \delta$, we infer, due to~$\G_{2\delta}(Du_\epsilon)=0$ and also~$\G_{2\delta} \leq \G_\delta$ a.e. in~$B_\rho(x_0)$, the estimate
   \begin{equation} \label{mudelta}
        \esssup\limits_{B_{\rho_i}(x_0)}\,|\G_{2\delta}(Du_\epsilon)|_E \leq \mu_i \qquad\mbox{for any~$i=0,1,\ldots$}
    \end{equation}
Due to simplicity, we therefore assume~$\delta<\mu_i$ in each step of the iteration, in case the degenerate regime applies. Now, if the measure-theoretic information~\eqref{degeneratemeascond} is satisfied on~$B_{\rho_1}(x_0)$ with level~$\mu_1$ for any~$e^*\in\partial E^*$, another application of Proposition~\ref{degenerateproposition} yields
   \begin{equation*} 
        \esssup\limits_{B_{\rho_2}(x_0)}\,|\G_{\delta}(Du_\epsilon)|_E \leq \kappa^2 \mu = \mu_2.
    \end{equation*}
Assume that the degenerate regime, i.e.~\eqref{degeneratemeascond}, applies on each ball of the iteration for~$i=0,1,\ldots,i^*-1,i^*$ for some~$i^*\in\N_0$. Then, we deduce that
   \begin{equation} \label{est:degenerateiterated}
        \esssup\limits_{B_{\rho_i}(x_0)}\,|\G_{\delta}(Du_\epsilon)|_E \leq  \mu_i \qquad \mbox{for any~$i=0,1,\ldots,i^*$}.
    \end{equation}
In case the measure-theoretic information~\eqref{degeneratemeascond} fails to hold for some~$i^*\in\N_0$, i.e. at a certain step of the iteration, we infer that there exists at least one~$e^*\in\partial E^*$, such that~\eqref{nondegeneratemeascond} is satisfied on~$B_{\rho_{i^*}}(x_0)$ to level~$\mu_{i^*}$. Then, if~$\delta<\mu_{i^*}$, Proposition~\ref{nondegenerateproposition} is applicable and the Lebesgue representative
$$  \Gamma_{x_0}\coloneqq \lim\limits_{r\downarrow 0} (\G_{2\delta}(Du_\epsilon))_{x_0,r} $$
exists, and there holds the quantitative~$L^2$ excess-decay estimate
\begin{align} \label{est:iterationexcess}
     \fint_{B_r(x_0)}|\G_{2\delta}(Du_\epsilon)-\Gamma_{x_0}|^2\,dx \leq C \Big(\frac{r}{\rho_{i^*}} \Big)^{2\alpha} \mu^2_{i^*}
\end{align}
for any~$r\in(0,\rho_{i*}]$, with a constant
$$ C = C(C_\F(\delta),\hat{C}_\F,\delta,\|f\|_{L^{n+\sigma}(B_R)},M,N,n,R_E,r_E,\sigma).  $$
Additionally, the Proposition yields
\begin{align} \label{est:nondegiterationgamma}
    |\Gamma_{x_0}|\leq R_E\mu_{i^*}. 
\end{align}
The estimates~\eqref{levelest} and~\eqref{est:iterationexcess}, due to~$\alpha_\delta\leq \frac{\Tilde{\beta}}{2}$, then imply
\begin{align*}
     \fint_{B_r(x_0)}|\G_{2\delta}(Du_\epsilon)-\Gamma_{x_0}|^2\,dx \leq C \Big(\frac{r}{\rho_{i^*}} \Big)^{2\alpha} \Big(\frac{\rho_{i^*}}{\rho} \Big)^{2\alpha_\delta} \mu^2 \leq C \Big(\frac{r}{\rho} \Big)^{2\alpha_\delta} \mu^2
\end{align*}
for any~$r\in(0,\rho_{i*}]$. If~$r\in (\rho_{i^*},\rho]$, we can find an index~$i=0,1,\ldots i^*$ with~$\rho_{i+1}\leq r\leq \rho_{i}$ and exploit~\eqref{betragminkowski},~\eqref{levelest},~\eqref{est:degenerateiterated}, and~\eqref{est:nondegiterationgamma}, to obtain
\begin{align*}
     \fint_{B_r(x_0)}|\G_{2\delta}(Du_\epsilon)-\Gamma_{x_0}|^2\,dx &\leq 2 \esssup\limits_{B_{\rho_i}(x_0)}\,|\G_{2\delta}(Du_\epsilon)| + 2 |\Gamma_{x_0}|^2 \\
     &\leq C \mu^2_i \leq C \Big(\frac{\rho_i}{\rho} \Big)^{2\alpha_\delta} \mu^2 \\
     & \leq C \Big(\frac{r}{\rho} \Big)^{2\alpha_\delta} \mu^2.
\end{align*}
 Thus, we have established the claimed excess-decay estimate~\eqref{proofexcess} if the non-degenerate regime applies at one step of the iteration process and the assumption~$\delta<\mu_{i^*}$ is fulfilled. Similarly to the degenerate regime, if~$\mu_{i^*}\leq \delta$, then we infer
   \begin{equation*}
        \esssup\limits_{B_{\rho_i}(x_0)}\,|\G_{2\delta}(Du_\epsilon)|_E \leq  \mu_i \qquad \mbox{for any~$i=0,1,\ldots$},
    \end{equation*}
which coincides with~\eqref{mudelta}. The last remaining scenario is the case where the degenerate regime, i.e.~\eqref{degeneratemeascond}, applies for any~$i\in\N_0$. However, in this case we once again obtain~\eqref{mudelta}, which yields that
\begin{align*}
   0 &\leq |(\G_{2\delta}(Du_\epsilon))_{x_0,\rho_i}| \leq \fint_{B_{\rho_i}(x_0)} |\G_{2\delta}(Du_\epsilon)|\,\dx \leq \mbox{$\frac{1}{r_E}$} \mu_i \to 0 \qquad\mbox{as~$i\to\infty$},
 \end{align*}
and further implies
$$\Gamma_{x_0}=\lim\limits_{r\downarrow 0} (\G_{2\delta}(Du_\epsilon))_{x_0,r} = 0.$$ 
Moreover, in this case we also establish the excess-decay estimate~\eqref{proofexcess}. Further, for any~$r\in(0,\rho]$ there exists~$i\in\N_0$ with~$\rho_{i+1}\leq r\leq \rho_i$. By employing~\eqref{betragminkowski},~\eqref{levelest}, and~\eqref{mudelta}, we thus infer
\begin{align*}
      \fint_{B_r(x_0)}|\G_{2\delta}(Du_\epsilon)-\Gamma_{x_0}|^2\,dx &\leq  \esssup\limits_{B_{\rho_i}(x_0)}\,|\G_{\delta}(Du_\epsilon)|^2_E \leq C \mu^2_i \leq \Big(\frac{\rho_i}{\rho} \Big)^{2\alpha_\delta} \mu^2 \leq C \Big(\frac{r}{\rho} \Big)^{2\alpha_\delta} \mu^2,
\end{align*}
where~$C$ exhibits the same dependence as before. The Hölder continuity of~$\G_{2\delta}(Du_\epsilon)$ now follows by considering its Lebesgue representative~$B_\rho(x_0)\ni x\mapsto\Gamma_{x}$. Let~$x,y\in B_\rho(x_0)$ with~$r\coloneqq|x-y|$. If~$r\leq \rho^*$, we let~$z\coloneqq \frac{x+y}{2}$ and infer from the reasoning provided above that
\begin{align*}
    |\Gamma_{x}-\Gamma_{y}|^2 & = \fint_{B_{\frac{r}{2}}(z)}  |\Gamma_{x}-\Gamma_{y}|^2\,\dx \\
    &\leq C(n) \bigg( \fint_{B_r(x)}|\G_{2\delta}(Du_\epsilon)-\Gamma_{x}|^2\,dx + \fint_{B_r(y)}|\G_{2\delta}(Du_\epsilon)-\Gamma_{y}|^2\,dx \bigg) \\
    &\leq C \bigg(\frac{|x-y|}{\rho^*} \bigg)^{2\alpha_\delta} \mu^2.
\end{align*}
This gives
\begin{align*}
    |\Gamma_{x}-\Gamma_{y}| \leq C \Big(\frac{|x-y|}{\rho^*} \Big)^{\alpha_\delta} \mu.
\end{align*}
In the other case where~$r>\rho^*$, we have 
\begin{align*}
    |\Gamma_{x}-\Gamma_{y}| \leq 2\mu \leq 2 \Big(\frac{|x-y|}{\rho^*} \Big)^{\alpha_\delta} \mu.
\end{align*}
This establishes the Hölder continuity of~$\G_{2\delta}(Du_\epsilon)$, and thus also of~$\G_{\delta}(Du_\epsilon)$. In particular, this reasoning yields the stated dependence of~$\alpha_\delta\in(0,1)$ and~$C_\delta\geq 1$ in Theorem~\ref{holdermainresult}. The proof is complete.
\end{proof}


We now state the proof of the main regularity result, that is Theorem~\ref{hauptresultat}.

\begin{proof}[\textbf{\upshape Proof of Theorem~\ref{hauptresultat}}]

Once Theorem~\ref{holdermainresult} has been established, the main regularity Theorem~\ref{hauptresultat} readily follows. As before, let us consider the arbitrarily chosen ball~$B_R=B_R(y_0)\Subset\Omega$ and let~$u_\epsilon\in u+W^{1,\infty}_0(B_R)$ denote the unique weak solution to the Dirichlet problem~\eqref{approx} for some parameter~$\epsilon\in(0,1]$. Moreover, consider~$\delta\in(0,1]$ and an arbitrarily chosen ball~$B_\rho(x_0)\Subset B_R$. Then, Theorem~\ref{holdermainresult} yields that~$\G_\delta(Du_\epsilon)$ is Hölder continuous in~$\overline{B}_\rho(x_0)$ with a Hölder exponent~$\alpha_\delta\in(0,1)$ and a Hölder constant~$C_\delta$, both depending on the data
\begin{align*}
       \alpha_\delta &= \alpha_\delta(C_\F(\delta),\hat{C}_\F,\delta,\|f\|_{L^{n+\sigma}(B_R)},M,N,n,R,r_E,\sigma) \in(0,1), \\
         C_\delta &= C_\delta(C_\F(\delta),\hat{C}_\F,\delta,\|f\|_{L^{n+\sigma}(B_R)},M,N,n,R,R_E,r_E,\sigma) \geq 1,
\end{align*}
Due to Lemma~\ref{lemgdeltakvgz}, we know that~$\G_\delta(Du_\epsilon)\to\G_\delta(Du)$ in~$L^2(B_R)$ as~$\epsilon\downarrow 0$, yielding the existence of a subsequence~$(\epsilon_i)_{i\in\N}\subset (0,1]$ with~$\G_\delta(Du_{\epsilon_i}) \to \G_\delta(Du)$ a.e. in~$B_R$ as~$\epsilon_i\downarrow 0$. Since the family~$(\G_\delta(Du_\epsilon))_{\epsilon\in(0,1]}$ is locally uniformly equicontinuous and also locally uniformly bounded in~$B_R$, an application of Arzelà-Ascoli's theorem and passing to the limit~$\epsilon\downarrow 0$ yields that the limit function~$\G_\delta(Du)$ itself is Hölder continuous in~$\overline{B}_\rho(x_0)$ with a Hölder constant~$C_\delta$ and Hölder exponent~$\alpha_\delta\in(0,1)$. Furthermore, there holds~$\G_\delta(Du)\to\G(Du)$ uniformly in~$\overline{B}_\rho(x_0)$, which follows from the estimate
\begin{align*}
    |\G_\delta(Du) - \G(Du)| &= \bigg| \frac{(|Du|_E - (1+\delta))_+}{|Du|_E}Du - \frac{(|Du|_E-1)_+}{|Du|_E}Du \bigg| \\
    &\leq \frac{|Du|}{|Du|_E} \big|(|Du|_E-(1+\delta))_+ - (|Du|_E-1)_+ \big| \\
    &\leq \mbox{$\frac{1}{r_E}$} \big|(|Du|_E-(1+\delta))_+ - (|Du|_E-1)_+ \big|  \\
    &\leq \mbox{$\frac{1}{r_E}$} \delta
\end{align*}
a.e. in~$\overline{B}_\rho(x_0)$. In turn, we used~\eqref{betragminkowski} and Lemma~\ref{lem:algineq}. Therefore, the limit function~$\G(Du)$ itself is uniformly continuous in~$\overline{B}_\rho(x_0)$. Let~$\epsilon >0$ be an arbitrary parameter, which we note is at this stage no longer associated with the approximation parameter mentioned previously. Then, there exists~$\kappa>0$ with
\begin{align} \label{modulusofcontinuity}
    |\G(Du(x))-\G(Du(y))| <\epsilon \qquad \mbox{for any~$x,y \in \overline{B}_\rho(x_0)$ with~$|x-y|<\kappa$}.
\end{align}
Further, let~$\mathcal{K}\in C^0(\R^n)$ denote an arbitrary continuous function that vanishes inside the bounded and convex set of degeneracy~$E\subset\R^n$ with~$0\in\inn{E}$. Due to the fact that~$u\in W^{1,\infty}_{\loc}(\Omega)$, there exists some positive and finite constant~$0<M<\infty$ with~$|Du| \leq M$ a.e. in~$\overline{B}_\rho(x_0)$.
Consequently,~$\mathcal{K}$ exhibits a modulus of continuity~$\omega_M\colon \R_{\geq 0} \to\R_{\geq 0}$ on~$\overline{B}_M$, i.e. there holds
$$|\mathcal{K}(\xi)-\mathcal{K}(\eta)|\leq \omega_M(|\xi-\eta|) \qquad\mbox{for any~$\xi,\eta \in \overline{B}_M$}.$$
We distinguish between two scenarios. Firstly, let us assume that there holds~$|Du(x)|_E\leq 1+\sqrt{\epsilon}$. If~$|Du(x)|_E\geq 1$, we exploit the fact that~$\mathcal{K}\equiv 0$ on~$E$ and also use the homogeneity of the Minkowski functional~$|\cdot|_E$ to obtain
\begin{align*}
    |\mathcal{K}(Du(x))| &= \bigg| \mathcal{K}(Du(x))-\mathcal{K}\Big(\frac{Du(x)}{|Du(x)|_E}\Big) \bigg| \\
    &\leq \omega_M\bigg(\Big|Du(x)-\frac{Du(x)}{|Du(x)|_E}\Big|\bigg) \\
    &= \omega_M\bigg(\frac{|Du(x)|}{|Du(x)|_E}(|Du(x)|_E-1)\bigg) \\
    &\leq \omega_M(R_E\sqrt{\epsilon}).
\end{align*}
If there holds~$|Du(x)|_E\leq 1$, then the above estimate clearly holds true as well. Due to~\eqref{modulusofcontinuity} being at our disposal, we further have
\begin{align*}
    (|Du(y)|_E-1)_+ &= |\G(Du(y))|_E \\
    &\leq |\G(Du(y)) - \G(Du(x))|_E + |\G(Du(x))|_E \\
    &< \mbox{$\frac{\epsilon}{r_E}$} + \sqrt{\epsilon} \\
    &\leq 2\max\{\mbox{$\frac{1}{r_E}$},1 \}\sqrt{\epsilon},
\end{align*}
where we used the triangle inequality~\eqref{lem:minkowskitriangle} and once again~\eqref{betragminkowski}. This implies
$$|Du(y)|_E \leq 1+2\max\{ \mbox{$\frac{1}{r_E}$},1 \}\sqrt{\epsilon},$$ such that by following the very same steps as above and exploiting~\eqref{betragminkowski}, we also obtain that~$|\mathcal{K}(Du(y))| \leq \omega_M\big(2 R_E\max\big\{\frac{1}{r_E},1 \big\}\sqrt{\epsilon} \big)$. Combining both estimates for~$|\mathcal{K}(Du(x))|$ and~$|\mathcal{K}(Du(y))|$, we thus find
$$|\mathcal{K}(Du(x))-\mathcal{K}(Du(y))| \leq \omega_M(R_E\sqrt{\epsilon}) + \omega_M\big(2 R_E\max\{ \mbox{$\frac{1}{r_E}$},1 \} \sqrt{\epsilon} \big) \leq 2 \omega_M\big(2 R_E\max\{ \mbox{$\frac{1}{r_E}$},1\} \sqrt{\epsilon}\big), $$
which yields the continuity of~$\mathcal{K}(Du)$ in the first case. If the second alternative~$|Du(x)|_E>1+\sqrt{\epsilon}$ applies, we may employ the second assertion of Lemma~\ref{gdeltalem} to obtain that there holds
\begin{align*}
    |Du(x)-Du(y)| &\leq \frac{C(R_E,r_E)}{\sqrt{\epsilon}}|\G(Du(x))-\G(Du(y))| \\
    &\leq C(R_E,r_E) \sqrt{\epsilon}.
\end{align*}
In turn, this implies
$$|\mathcal{K}(Du(x)) - \mathcal{K}(Du(y))| \leq \omega_M(C(R_E,r_E) \sqrt{\epsilon}),$$
which verifies that~$\mathcal{K}(Du)$ is continuous in~$\overline{B}_\rho(x_0)$. As all the balls~$B_\rho(x_0)\Subset B_R$ were chosen arbitrarily, we obtain that~$\mathcal{K}(Du)$ is also continuous within the entire domain~$\Omega$.

\end{proof}


\begin{remark} \label{remarknachhauptresultatproof}
    \upshape 
It is noteworthy that during the proof of Theorem~\ref{hauptresultat}, the control of the quantitative Hölder exponent is lost in the limit as~$\delta \downarrow 0$, leading to uniform continuity of the limit function~$\G(Du)$ locally in~$B_R$ instead of desirable Hölder continuity. To our knowledge it remains an open question even for the model equation~\eqref{prototype} what the best possible modulus of continuity of~$\G(Du)$ may be and whether~$\G(Du)$ is actually locally Hölder continuous in~$B_R$, posing an intriguing question for further exploration in future research, cf. the comments right after~\cite[Theorem~1.1]{bogelein2023higher}. However, in general,~$\mathcal{K}(Du)$ fails to be Hölder continuous for certain integrands~$\F$ in~$\Omega$ for any Hölder exponent~$\alpha \in (0,1)$. In fact, it has been demonstrated in~\cite[Remark~1.3]{colombo2017regularity} that for a certain integrand function~$\F$ there exist Lipschitz continuous functions~$\mathcal{K} \in C^0(\R^n, \R)$ that vanish inside the set of degeneracy~$E$, for which~$\mathcal{K}(Du)$ is not Hölder continuous.
\end{remark}


\section{The non-degenerate regime} \label{sec:nondegenerate}
The objective in this section is to establish the proof of Proposition~\ref{nondegenerateproposition} by following a series of steps. The underlying strategy lies in exploiting the measure-theoretic information~\eqref{nondegeneratemeascond} that characterizes the non-degenerate regime. The latter allows us to derive a lower bound for~$|Du_\epsilon|_E$ on the smaller ball~$B_{\frac{\rho}{2}}(x_0)$, provided the parameter~$\nu\in(0,\frac{1}{4}]$ incorporated in the measure-theoretic information~\eqref{nondegeneratemeascond} is chosen sufficiently small in dependence of the given data. Once the lower bound for~$|Du_\epsilon|_E$ has been established, we are able to state a quantitative excess-decay estimate for linear elliptic equations with bounded coefficients by exploiting a result found in~\cite{gilbargtrudinger}. Recall that in the course of this section, the set of assumptions~\eqref{schrankeeins},~\eqref{schrankezwei},~\eqref{deltamu}, and~\eqref{nondegeneratemeascond} is at our disposal.\,\\


We begin by exploiting the measure-theoretic information~\eqref{nondegeneratemeascond}, enabling us to derive a lower bound for~$|Du_\epsilon|_E $ on the smaller ball~$B_{\frac{\rho}{2}}(x_0)$.


\begin{myproposition} \label{lowerboundprop}
Let~$\delta,\epsilon\in(0,1]$ and~$u_\epsilon$ denote the weak solution to~\eqref{approx}. Then, there exists a parameter $\nu=\nu(C_\F(\delta),\hat{C}_\F,\delta,\|f\|_{L^{n+\sigma}(B_R)},M,N,n,R,r_E,\sigma) \in(0,\frac{1}{4}]$, such that: if there holds
\begin{equation} \label{lowerboundvs}
   | B_\rho(x_0)  \cap  \{ \partial_{e^*}u_\epsilon -(1+\delta) \leq (1-\nu) \mu \}|  \leq \nu |B_\rho(x_0)|
\end{equation}
for at least one~$e^*\in\partial E^*$ , then we have
\begin{equation} \label{lowerboundest}
    |D u_\epsilon|_E \geq 1+\delta+\frac{\mu}{4} \qquad\mbox{a.e. in~$B_{\frac{\rho}{2}}(x_0)$}.
\end{equation}
\end{myproposition}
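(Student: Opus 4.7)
The strategy is a De Giorgi iteration on the sublevel sets of $w := \partial_{e^*} u_\epsilon$ for the distinguished direction $e^* \in \partial E^*$ from the hypothesis~\eqref{lowerboundvs}. Since $u_\epsilon \in W^{2,2}_{\loc}(B_R)$ by Proposition~\ref{approxregularityeins}, I would differentiate the weak form~\eqref{weakformapprox} in direction $e^*$: testing~\eqref{weakformapprox} against $-\partial_{e^*}\psi$ for $\psi \in C_0^\infty(B_{2\rho}(x_0))$ and integrating by parts yields that $w$ weakly solves
\begin{equation*}
\int_{B_R} \langle A(x) Dw, D\psi \rangle \,\dx = \int_{B_R} f\,\partial_{e^*}\psi\,\dx \,-\, \sum_{i=1}^n e^*_i \int_{B_R} \langle \partial_{x_i}\hat{\h}(x, Du_\epsilon), D\psi\rangle\,\dx,
\end{equation*}
with $A(x) := \nabla^2\hat{\F}_\epsilon(x, Du_\epsilon)$. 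The representation~\eqref{minkowskialternativ} gives $|Du_\epsilon|_E \geq w$ pointwise, so on the set $\{w > 1+\delta\}$ the upper bound~\eqref{minkowskischranke} together with Lemma~\ref{bilinearelliptic} and the definition of $C_\F(\delta)$ in~\eqref{hesseschrankedelta} ensure that $A$ is uniformly elliptic and bounded, with both constants depending on $(C_\F(\delta), \delta, M)$ but independent of $\epsilon$.

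To isolate this non-degenerate regime while testing, I would use the doubly-truncated quantity
\begin{equation*}
\psi_k(w) := \bigl(k - \max\{w,\, 1+\delta\}\bigr)_+, \qquad k \in \bigl[1+\delta + \tfrac{\mu}{4},\; 1+\delta + \tfrac{\mu}{2}\bigr],
\end{equation*}
which satisfies $D\psi_k(w) = -Dw\,\chi_{\{1+\delta < w < k\}}$ a.e., so that its gradient is supported precisely where $A$ is non-degenerate. Plugging $\psi = \psi_k(w)\,\zeta^2$ into the equation for $w$, with $\zeta \in C_0^\infty(B_{r_2})$ a standard cutoff equal to one on $B_{r_1}$ for radii $\tfrac{\rho}{2} \leq r_1 < r_2 \leq \rho$, and estimating the cross-term by Young's inequality, the $\partial_{x_i}\hat{\h}$ contribution via the Lipschitz bound~\eqref{hatflipschitz}, and the $f$-term through Hölder's and Sobolev's inequalities on $\{w < k\} \cap B_{r_2}$ using $f \in L^{n+\sigma}$ with the exponent $\beta$ from~\eqref{beta}, one obtains a Caccioppoli-type estimate
\begin{equation*}
\int_{B_{r_1}} |D\psi_k(w)|^2\,\dx \leq \frac{C}{(r_2-r_1)^2} \int_{B_{r_2}} \psi_k(w)^2\,\dx + C\,\mu^2\,\bigl|\{w < k\} \cap B_{r_2}\bigr|^{1+\gamma}
\end{equation*}
for some $\gamma > 0$ depending on $\beta$ and $n$, with $C$ depending on the data in the statement but not on $\epsilon$.

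With the Caccioppoli inequality in hand, the iteration is standard. Choose levels $k_j := 1+\delta + \bigl(\tfrac{1}{4} + 2^{-j-2}\bigr)\mu$ decreasing to $1+\delta + \tfrac{\mu}{4}$ and nested radii $r_j := \tfrac{\rho}{2} + 2^{-j-1}\rho$ shrinking to $\tfrac{\rho}{2}$. Setting $Y_j := |\{w < k_j\} \cap B_{r_j}(x_0)|/|B_{r_j}(x_0)|$ and combining the Caccioppoli estimate with Sobolev's inequality in the usual De Giorgi fashion produces a recursion $Y_{j+1} \leq C\,b^j\,Y_j^{1+\kappa}$ with $\kappa > 0$ and $C, b$ depending on the data. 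Since $\nu \leq \tfrac{1}{4}$ forces $k_0 = 1+\delta + \tfrac{\mu}{2} \leq 1+\delta + (1-\nu)\mu$, the measure hypothesis~\eqref{lowerboundvs} yields $Y_0 \leq \nu$. Lemma~\ref{geometriclem} then forces $Y_j \to 0$ as soon as $\nu \leq C^{-1/\kappa}b^{-1/\kappa^2}$, and this smallness threshold fixes the quantitative dependence of $\nu$ on $(C_\F(\delta), \hat{C}_\F, \delta, \|f\|_{L^{n+\sigma}(B_R)}, M, N, n, R, r_E, \sigma)$ claimed in the statement. Consequently $|\{w < 1+\delta+\tfrac{\mu}{4}\} \cap B_{\rho/2}(x_0)| = 0$, and since $|Du_\epsilon|_E \geq w$ this yields~\eqref{lowerboundest}.

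The principal obstacle is maintaining $\epsilon$-independence of all constants throughout the argument: the matrix $A$ is only uniformly elliptic on $\{|Du_\epsilon|_E > 1+\delta\}$ with constants independent of $\epsilon$, which is exactly why the double truncation $\psi_k$ --- clipping from below at $1+\delta$ and from above at $k$ --- is indispensable. A secondary technical point is the $f$-forcing: it must be integrated by parts so that $f$ itself, rather than any derivative of it, appears on the right-hand side, after which Hölder's inequality with exponent $n+\sigma$ produces the margin $\beta > 0$ that is converted by Sobolev's inequality into the extra $|\{w<k\} \cap B_{r_2}|^\gamma$ factor needed to close the iteration.
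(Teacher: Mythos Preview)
Your overall strategy---differentiate the equation in direction $e^*$, derive a Caccioppoli inequality by testing with a truncation, and run a De Giorgi iteration started from the measure hypothesis---matches the paper's. The gap is in the Caccioppoli step. Your truncation $\psi_k(w)=(k-\max\{w,1+\delta\})_+$ does have $D\psi_k(w)$ supported in $\{1+\delta<w<k\}$, but the \emph{test function} $\psi=\psi_k(w)\zeta^2$ does not vanish on $\{w\le1+\delta\}$: it equals $(k-(1+\delta))\zeta^2$ there. Hence $D\psi$ contains the piece $\psi_k(w)\,D(\zeta^2)$, and the cross term
\[
(k-(1+\delta))\int_{\{w\le1+\delta\}}\big\langle A\,Dw,\,D(\zeta^2)\big\rangle\,\dx
\]
is not confined to the region where $A=\nabla^2\hat{\F}_\epsilon(x,Du_\epsilon)$ is controlled. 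The set $\{w\le1+\delta\}$ can meet the strip $\{1<|Du_\epsilon|_E\le1+\delta\}$ (since $w=\partial_{e^*}u_\epsilon$ is only one directional derivative), and condition~\eqref{voraussetzung} gives no upper bound on $\nabla^2\hat\F$ there; in the sub-quadratic prototype~\eqref{prototype} with $1<p<2$ the Hessian in fact blows up as $|\xi|\downarrow1$. So Young's inequality on this cross term does not produce an $\epsilon$-independent estimate, and the Caccioppoli inequality is not justified as written.

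The paper repairs exactly this point by taking the test function $\zeta^2(\partial_{e^*}\tilde u-(1+\delta))_+(\partial_{e^*}\tilde u-\kappa)_-$, whose support \emph{and} whose gradient's support lie in $\{\partial_{e^*}\tilde u>1+\delta\}\subset\{|D\tilde u|_E>1+\delta\}$; the factor $(\partial_{e^*}\tilde u-(1+\delta))_+$ annihilates every contribution from the bad strip. As a consequence the paper iterates on \emph{band} sets $\{\delta_m<\partial_{e^*}\tilde u<\kappa_m\}$ with two moving levels rather than on sublevel sets, concludes that the band $\{1+\delta+\tfrac{\mu}{8}\le\partial_{e^*}\tilde u\le1+\delta+\tfrac{\mu}{4}\}\cap B_{1/2}$ is null, and then uses a short absolute-continuity argument (via $u_\epsilon\in W^{2,2}_{\loc}$) to obtain the dichotomy. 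Your more direct route through sublevel sets would be cleaner, but the test function must be modified so that it---not just its intrinsic gradient---vanishes on $\{w\le1+\delta\}$.
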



\begin{proof}
To abbreviate notation, we shall simply write~$u=u_\epsilon$ in what follows. The proof consists of three major steps and follows an approach taken by Kuusi and Mingione~\cite[Proof of Proposition 3.7]{kuusi2013gradient} in a different setting. \,\\

\textbf{Step 1:} We commence by rescaling the solution~$u$ to the unit ball. For this matter, we introduce the rescaled mappings
\begin{alignat*}{2}
    \Tilde{u}(x) &\coloneqq \frac{u(x_0+\rho x)}{\rho},  && \qquad x\in B_{1}, \\
    \Tilde{\F}_\epsilon(x,\xi) &\coloneqq \hat{\F}_\epsilon(x_0+\rho x,\xi),  && \qquad x\in B_{1},\,\xi\in\R^n, \\
    \Tilde{\mathcal{B}}_\epsilon(x,\xi) &\coloneqq \hat{\mathcal{B}}_\epsilon(x_0+\rho x,\xi),  && \qquad x\in B_{1},\,\xi\in\R^n, \\
    \Tilde{f} (x) &\coloneqq \rho f(x_0+\rho x), && \qquad x\in B_{1}.
\end{alignat*}
For the following discussion, we shall also use the notation~$\Tilde{\h}_\epsilon(x,\xi) \coloneqq \nabla\Tilde{\F}_\epsilon(x,\xi)$. Consequently, since~$u$ solves~\eqref{approx} weakly on~$B_\rho(x_0)$, the rescaled function~$\Tilde{u}$ is a weak solution to
\begin{equation*} 
    \divv \Tilde{\h}_\epsilon(x,D\Tilde{u}) = \Tilde{f} \qquad \mbox{in~$B_{1}$},
\end{equation*}
i.e. there holds
\begin{align} \label{rescaledsolutionweakform}
    \int_{B_1} \langle \Tilde{\h}_\epsilon(x,D\Tilde{u}),D\phi \rangle\,\dx = - \int_{B_1} \Tilde{f} \phi\,\dx
\end{align}
for any test function~$\phi\in C^\infty_0(B_1)$. Moreover, the measure-theoretic assumption~\eqref{lowerboundvs} translates to
\begin{align} \label{lowerboundscaled}
   |B_1 \cap \{ \partial_{\Tilde{e}^*}\Tilde{u} -(1+\delta) \leq (1-\nu)\mu  \}| \leq \nu |B_1|
\end{align} 
on~$B_1$, where~$\Tilde{e}^*\in \partial E^*$ denotes such an element that satisfies~\eqref{lowerboundvs}. \,\\

\textbf{Step 2:} Next, we aim to establish a Caccioppoli-type estimate for~$\Tilde{u}$. Let~$\kappa\in(1+\delta,1+\delta+\mu)$ and~$e^*\in\partial E^*$ be chosen arbitrarily. We test the weak form~\eqref{rescaledsolutionweakform} with the test function~$\partial_{e^*} \phi$ for some~$\phi\in C^1_0(B_1)$ and subsequently integrate by parts to obtain
\begin{align} \label{rescaledintbyparts}
    \int_{B_1} \langle \partial_{e^*}[\Tilde{\h}_\epsilon(x,D\Tilde{u})], D\phi \rangle\,\dx =  \int_{B_1} \Tilde{f} \partial_{e^*}\phi\,\dx.
\end{align}
Here, we choose the test function
$$\phi_{e^*} = \zeta^2 ( \partial_{e^*} \Tilde{u} - a)_+ ( \partial_{e^*} \Tilde{u} - \kappa)_-,$$
where we abbreviated~$a\coloneqq 1+\delta$, and~$\zeta\in C^1_0(B_1,[0,1])$ denotes a standard non-negative smooth cut-off function. This choice of test function can be justified by an approximation argument. With this choice of test function, we have
\begin{align*}
    \partial_j \phi_{e^*} &= 2\zeta \partial_j \zeta (\partial_{e^*} \Tilde{u} - a)_+ (\partial_{e^*} \Tilde{u} - \kappa)_- + \zeta^2 \partial_{j}\partial_{e^*} \Tilde{u} (\partial_{e^*}\Tilde{u} -\kappa)_-  \bigchi_{\{ \partial_{e^*}\Tilde{u}>a\}} \\
             &\quad + \zeta^2 \partial_{j}\partial_{e^*}\Tilde{u} (\partial_{e^*}\Tilde{u}-a)_+ \bigchi_{\{\partial_{e^*}\Tilde{u}<\kappa\}}   
\end{align*} 
for a.e.~$x\in B_1$ and any~$j\in\{1,\ldots,n\}$. The left-hand side of~\eqref{rescaledintbyparts} yields the following two terms
\begin{align*}
    \int_{B_1}  & \langle \partial_{e^*}[\Tilde{\h}_\epsilon(x,D\Tilde{u})],D\phi_{e^*} \rangle\,\dx \\
    &= \int_{B_1}  \Tilde{\mathcal{B}}_\epsilon(x,D\Tilde{u})(\partial_{e^*} D\tilde{u},D\phi_{e^*})\,\dx + \int_{B_1}  \langle \partial_{e^*} \Tilde{\h}_\epsilon(x,D\Tilde{u}),D\phi_{e^*} \rangle\,\dx \\
    &\eqqcolon \foo{I} + \foo{II}.
\end{align*}
We commence by treating the first term~$\foo{I}$, which is given by
\begin{align} \label{lowerboundfirstterm}
    \foo{I} &= \int_{B_1} 2\zeta \Tilde{\mathcal{B}}_\epsilon(x,D\Tilde{u})(\partial_{e^*} D\tilde{u},D \zeta) (\partial_{e^*}\Tilde{u} - a)_+ (\partial_{e^*}\Tilde{u}-\kappa)_- \,\dx \\
    &\quad + \int_{B_1} \zeta^2 \Tilde{\mathcal{B}}_\epsilon(x,D\Tilde{u})(\partial_{e^*} D\tilde{u},\partial_{e^*} D\tilde{u}) \big( (\partial_{e^*}\Tilde{u}-\kappa)_-\bigchi_{\{\partial_{e^*}\Tilde{u}>a\}} + (\partial_{e^*}\Tilde{u} - a)_+\bigchi_{\{\partial_{e^*}\Tilde{u}<\kappa\}} \big) \,\dx. \nonumber 
\end{align}
At this point, we note that due to the set inclusion
$$ \{x\in B_1:\partial_{e^*}\Tilde{u}\geq 1+\delta\} \subset \{x\in B_1: |D\Tilde{u}|_E \geq 1+\delta \}, $$
we are in position to employ Lemma~\ref{bilinearelliptic} and recall~\eqref{hatfvoraussetzung}. We exploit the definition of~$C_\F$ according to~\eqref{schrankehessian} and also~$C_\F(\delta)$ in~\eqref{hesseschrankedelta}, to bound the preceding quantity~$\foo{I}$ further below by
\begin{align} \label{lowerfirsttermsummed}
   \foo{I} &\geq  \big(\epsilon+\Tilde{C}(C_\F(\delta),\delta)\big) \int_{B_1} \zeta^2 |\partial_{e^*}D\Tilde{u}|^2 \big( (\partial_{e^*}\Tilde{u}-\kappa)_-\bigchi_{\{\partial_{e^*}\Tilde{u}>a\}} + (\partial_{e^*}\Tilde{u} - a)_+\bigchi_{\{\partial_{e^*}\Tilde{u}<\kappa\}} \big) \,\dx \\
   &\quad -  C_\F(\delta) \int_{B_1} \zeta |\partial_{e^*} D\Tilde{u}| |D\zeta| (\partial_{e^*}\Tilde{u} - a)_+ (\partial_{e^*}\Tilde{u}-\kappa)_- \,\dx   \nonumber \\
   &\geq \big(\epsilon+\mbox{$\frac{3}{4}$}\Tilde{C}\big) \int_{B_1} \zeta^2 |\partial_{e^*}D\Tilde{u}|^2 \big( (\partial_{e^*}\Tilde{u}-\kappa)_-\bigchi_{\{\partial_{e^*}\Tilde{u}>a\}} + (\partial_{e^*}\Tilde{u} - a)_+\bigchi_{\{\partial_{e^*}\Tilde{u}<\kappa\}} \big) \,\dx \nonumber \\
   & \quad - C \int_{B_1}  |D\zeta|^2 (\partial_{e^*}\Tilde{u} - a)^2_+ \bigchi_{\{\partial_{e^*}\Tilde{u}<\kappa\}} \,\dx, \nonumber 
\end{align}
where~$C=C(C_\F(\delta),\delta)$ and~$\Tilde{C}=\Tilde{C}(C_\F(\delta),\delta)$ denotes some positive constants that depend on~$C_\F(\delta)$ and~$\delta\in(0,1]$. In the last step, we additionally employed Young's inequality and reabsorbed one of the two arising terms with constant~$\frac{1}{4}\Tilde{C}(\delta)$ into the first term. We now turn our attention to the second quantity~$\foo{II}$. There, we employ the Cauchy-Schwarz inequality and exploit the Lipschitz bound~\eqref{hatflipschitz} as well as the representation~\eqref{representation}, to obtain the estimate
\begin{align*}
    \foo{II} & = \int_{B_1} \langle \partial_{e^*} \Tilde{\h}_\epsilon(x,D\Tilde{u}),D\phi_{e^*} \rangle\,\dx \\
    &\leq C \int_{B_1} |D\phi_{e^*}| \,\dx \\
    &\leq C \int_{B_1} \zeta |D\zeta| (\partial_{e^*}\Tilde{u} - a)_+ (\partial_{e^*}\Tilde{u} - \kappa)_-\,\dx \\ 
    &\quad + C \int_{B_1} \zeta^2 |\partial_{e^*}D\Tilde{u}| \big( (\partial_{e^*}\Tilde{u}-\kappa)_-\bigchi_{\{\partial_{e^*}\Tilde{u}>a\}} + (\partial_{e^*}\Tilde{u} - a)_+\bigchi_{\{\partial_{e^*}\Tilde{u}<\kappa\}} \big)\,\dx 
\end{align*}
with constant~$C=C(\hat{C}_\F,N,n,r_E)$. In turn, we used that~$|v_i|\leq R_E$ for any~$i\in 1,\ldots,n$, where~$v_i$ denote the coefficients of~$e^*\in\partial E^*$ with respect to the standard base. Therefore, after employing Young's inequality, we achieve
\begin{align} \label{lowersecondtermsummed}
   \foo{II} & \leq C \int_{B_1}  \zeta |D\zeta| (\partial_{e^*}\Tilde{u} - a)_+ (\partial_{e^*}\Tilde{u}-\kappa)_-\,\dx  \\
    &\quad + \mbox{$\frac{1}{4}$}\Tilde{C} \int_{B_1} \zeta^2 |\partial_{e^*}D\Tilde{u}|^2 \big( (\partial_{e^*}\Tilde{u}-\kappa)_-\bigchi_{\{\partial_{e^*}\Tilde{u}>a\}} + (\partial_{e^*}\Tilde{u} - a)_+\bigchi_{\{\partial_{e^*}\Tilde{u}<\kappa\}} \big) \,\dx \nonumber \\
    &\quad + C \int_{B_1} \zeta^2 \big( (\partial_{e^*}\Tilde{u}-\kappa)_-\bigchi_{\{\partial_{e^*}\Tilde{u}>a\}} + (\partial_{e^*}\Tilde{u} - a)_+\bigchi_{\{\partial_{e^*}\Tilde{u}<\kappa\}} \big) \,\dx \nonumber
\end{align}
with~$C=C(C_\F(\delta),\hat{C}_\F,\delta,N,n,r_E)$. Let us now turn our attention to the right-hand side in~\eqref{rescaledintbyparts} involving the datum~$\Tilde{f}$. In a similar fashion to the second quantity~$\foo{II}$, we bound the right-hand side further above by
\begin{align} \label{lowerdatumtermsummed}
    \foo{III} &\coloneqq \int_{B_1} \Tilde{f} \partial_{e^*}\phi_{e^*} \,\dx \\
    &\leq \int_{B_1} 2 |\Tilde{f}| \zeta |\partial_{e^*}\zeta|  (\partial_{e^*}\Tilde{u} - a)_+ (\partial_{e^*}\Tilde{u}-\kappa)_-\,\dx \nonumber \\
    &\quad + \int_{B_1} |\Tilde{f}| \zeta^2 |\partial_{e^*}D\Tilde{u}| \big( (\partial_{e^*}\Tilde{u}-\kappa)_-\bigchi_{\{\partial_{e^*}\Tilde{u}>a\}} + (\partial_{e^*}\Tilde{u} - a)_+\bigchi_{\{\partial_{e^*}\Tilde{u}<\kappa\}} \big) \,\dx \nonumber \\
    &\leq C \int_{B_1} |\Tilde{f}| \zeta |D\zeta|  (\partial_{e^*}\Tilde{u} - a)_+ (\partial_{e^*}\Tilde{u}-\kappa)_-\,\dx \nonumber \\
   &\quad + \mbox{$\frac{1}{4}$}\Tilde{C} \int_{B_1} \zeta^2 |\partial_{e^*}D\Tilde{u}|^2 \big( (\partial_{e^*}\Tilde{u}-\kappa)_-\bigchi_{\{\partial_{e^*}\Tilde{u}>a\}} + (\partial_{e^*}\Tilde{u} - a)_+\bigchi_{\{\partial_{e^*}\Tilde{u}<\kappa\}} \big) \,\dx \nonumber \\
    &\quad + C \int_{B_1} \zeta^2 |\Tilde{f}|^2 \big( (\partial_{e^*}\Tilde{u}-\kappa)_-\bigchi_{\{\partial_{e^*}\Tilde{u}>a\}} + (\partial_{e^*}\Tilde{u} - a)_+\bigchi_{\{\partial_{e^*}\Tilde{u}<\kappa\}} \big) \,\dx \nonumber
\end{align}
with a constant~$C=C(C_\F(\delta),\delta)$. By combining our estimates~\eqref{lowerfirsttermsummed},~\eqref{lowersecondtermsummed}, and also~\eqref{lowerdatumtermsummed}, there holds 
\begin{align} \label{lowerboundcombined}
    &\big(\epsilon+\mbox{$\frac{3}{4}$}\Tilde{C}(C_\F(\delta),\delta)\big) \int_{B_1} \zeta^2 |\partial_{e^*} D\Tilde{u}|^2 \big( (\partial_{e^*}\Tilde{u}-\kappa)_-\bigchi_{\{\partial_{e^*}\Tilde{u}>a\}} + (\partial_{e^*}\Tilde{u} - a)_+\bigchi_{\{\partial_{e^*}\Tilde{u}<\kappa\}} \big) \,\dx  \\
    &\leq \mbox{$\frac{1}{2}$}\Tilde{C}(C_\F(\delta),\delta) \int_{B_1} \zeta^2 |\partial_{e^*} D\Tilde{u}|^2 \big( (\partial_{e^*}\Tilde{u}-\kappa)_-\bigchi_{\{\partial_{e^*}\Tilde{u}>a\}} + (\partial_{e^*}\Tilde{u} - a)_+\bigchi_{\{\partial_{e^*}\Tilde{u}<\kappa\}} \big) \,\dx  \nonumber \\
    &\quad + C \int_{B_1}  \zeta |D\zeta| (\partial_{e^*}\Tilde{u} - a)_+ (\partial_{e^*}\Tilde{u}-\kappa)_- \big(1+|\Tilde{f}|\big)\,\dx \nonumber \\
    &\quad + C \int_{B_1} \zeta^2 (\partial_{e^*}\Tilde{u} - a)_+\big( (\partial_{e^*}\Tilde{u}-\kappa)_- + \bigchi_{\{\partial_{e^*}\Tilde{u}<\kappa\}} \big) \big(1+|\Tilde{f}|^2 \big) \,\dx \nonumber \\
    &\quad + C \int_{B_1}  |D\zeta|^2 (\partial_{e^*}\Tilde{u} - a)^2_+ \bigchi_{\{\partial_{e^*}\Tilde{u}<\kappa\}} \,\dx \nonumber
\end{align}
with~$C=C(C_\F(\delta),\hat{C}_\F,\delta,N,n,r_E)$. Eventually, we reabsorb the quantity involving second order derivatives of~$\Tilde{u}$ from the right-hand side of~\eqref{lowerboundcombined} into the left-hand side. This leads to the following estimate
\begin{align*}
     &\big(\epsilon+\Tilde{C}(C_\F(\delta),\delta)\big) \int_{B_1} \zeta^2 |\partial_{e^*} D\Tilde{u}|^2 \big( (\partial_{e^*}\Tilde{u}-\kappa)_-\bigchi_{\{\partial_{e^*}\Tilde{u}>a\}} + (\partial_{e^*}\Tilde{u} - a)_+\bigchi_{\{\partial_{e^*}\Tilde{u}<\kappa\}} \big) \,\dx  \\
    &\leq C \int_{B_1}  \zeta |D\zeta| (\partial_{e^*}\Tilde{u} - a)_+ (\partial_{e^*}\Tilde{u}-\kappa)_- \big(1+|\Tilde{f}|\big)\,\dx \nonumber \\
    &\quad + C \int_{B_1} \zeta^2 (\partial_{e^*}\Tilde{u} - a)_+\big( (\partial_{e^*}\Tilde{u}-\kappa)_- + \bigchi_{\{\partial_{e^*}\Tilde{u}<\kappa\}} \big) \big(1+|\Tilde{f}|^2 \big) \,\dx \nonumber \\
    &\quad + C \int_{B_1}  |D\zeta|^2 (\partial_{e^*}\Tilde{u} - a)^2_+ \bigchi_{\{\partial_{e^*}\Tilde{u}<\kappa\}} \,\dx \nonumber
\end{align*}
with~$C=C(C_\F(\delta),\hat{C}_\F,\delta,N,n,r_E)$. Consequently, by further estimating the right-hand side of the preceding inequality with Hölder's inequality and by taking into account assumption~\eqref{schrankeeins} and also the fact that~$\Tilde{f}\in L^{n+\sigma}(B_1)$, we end up with
\begin{align*} 
  \int_{B_1} & \zeta^2 |\partial_{e^*} D\Tilde{u}|^2 (\partial_{e^*} \Tilde{u}-\kappa)_- \bigchi_{\{\partial_{e^*} \Tilde{u}>a\}}\,\dx + \int_{B_1} \zeta^2 |\partial_{e^*} D\Tilde{u}|^2  (\partial_{e^*}\Tilde{u}-a)_+ \bigchi_{\{\partial_{e^*} \Tilde{u}<\kappa\}}\,\dx \\ 
  &\leq C \big(\|D\zeta\|_{L^\infty(B_1)} + \|D\zeta\|^2_{L^\infty(B_1)}\big) |B_1 \cap \{1+\delta < \partial_{e^*} \Tilde{u}<\kappa\}|^{1-\frac{2}{n+\sigma}}
\end{align*}
with~$C=C(C_\F(\delta),\hat{C}_\F,\delta,M,N,n,R,r_E,\sigma)$, where we additionally exploited the fact that
$$\|\Tilde{f}\|_{L^{n+\sigma}(B_1)} \leq \rho^{\beta} \|f\|_{L^{n+\sigma}(B_{\rho}(x_0))} \leq R^{\beta} \|f\|_{L^{n+\sigma}(B_R)},$$
which follows from a change of variables. Note that the preceding estimate in particular implies 
\begin{align} \label{lowerboundee}
    &\|D [(\partial_{e^*} \Tilde{u} - \kappa)_-  (\partial_{e^*} \Tilde{u} - a)_+ \zeta]\|^2_{L^2(B_1)} \\
    &\quad \leq  C \big(\|\zeta\|^2_{L^\infty(B_1)} + \|D\zeta\|^2_{L^\infty(B_1)}\big) |B_1 \cap \{1+\delta < \partial_{e^*} \Tilde{u} <\kappa\}|^{1-\frac{2}{n+\sigma}} \nonumber
\end{align}
with~$C=C(C_\F(\delta),\hat{C}_\F,\delta,\|f\|_{L^{n+\sigma}(B_R)},M,N,n,R,r_E,\sigma)$. \,\\

\textbf{Step 3:} In this final step, we aim to apply the geometric convergence Lemma~\ref{geometriclem}. For this matter, we consider the sequences of levels
\begin{alignat*}{2}
\delta_m &\coloneqq 1+\delta+\frac{\mu}{8}\big(1-\frac{1}{2^{m}}\Big),\qquad && m\in\N_0, \\
    \kappa_m &\coloneqq 1+\delta+\frac{\mu}{4}\Big(1+\frac{1}{2^m}\Big),\qquad && m\in\N_0,
\end{alignat*}
so that for any~$m\in\N_0$ there holds
$$ 1+\delta \leq \delta_m \leq 1+\delta+\frac{\mu}{8} \leq 1+\delta+\frac{\mu}{4} \leq \kappa_m \leq 1+\delta+\frac{\mu}{2}.$$
Moreover, let us introduce the sequence of radii
$$r_m \coloneqq \frac{1}{2}+\frac{1}{2^{m+1}},\qquad  m\in\N_0,$$
and consider standard smooth cut-off functions~$\zeta_m \in C^1_0(B_{r_m},[0,1])$ satisfying
$$\zeta_m \equiv 1 \quad \mbox{on~$B_{r_{m+1}}$} \qquad \& \qquad \|D\zeta_m\|_{L^\infty(B_{r_m})} \leq 2^{m+3}$$
for any~$m\in\N_0$. It is noteworthy that the upper bound for~$D\zeta_m$ is independent of the size of the radius~$\rho$, a consequence of the rescaling of~$u_\epsilon$ conducted in Step $1$ of the proof. By abbreviating
$$Y_m \coloneqq |A_m|\coloneqq |B_{r_m} \cap \{\delta_m < \partial_{e^*} \Tilde{u} <\kappa_m\}|$$
and choosing the test function~$\zeta = \zeta_m$ as well as the level~$\kappa=\kappa_m$ in the estimate~\eqref{lowerboundee}, and by replacing the parameter~$a=1+\delta$ with~$\delta_m$, we obtain that 
$$\delta_{m+1} - \delta_m = \frac{\mu}{2^{m+4}}, \qquad k_m - k_{m+1} = \frac{\mu}{2^{m+3}}$$
for any~$m\in\N_0$. Exploiting this estimate and the choice of the cut-off functions~$\zeta_m$, and further employing the estimate~\eqref{lowerboundee} as well as the Poincar\'{e}-type inequality from Lemma~\ref{poincarelem}, we achieve
\begin{align*}
    (\delta_{m+1}-\delta_m)^2  (\kappa_m  - \kappa_{m+1})^2Y_{m+1} 
    &\leq \| (\partial_{e^*} \Tilde{u}-\delta_m)_+ (\partial_{e^*} \Tilde{u}  -\kappa_m)_-  \zeta_m\|^2_{L^2(A_{m+1})} \\
    &\leq  \| (\partial_{e^*} \Tilde{u}-\delta_m)_+ (\partial_{e^*} \Tilde{u}  -\kappa_m)_-  \zeta_m\|^2_{L^2(B_{r_m})}  \\
    &\leq  C(n)  |A_m|^{\frac{2}{n}} \|D[(\partial_{e^*} \Tilde{u}-\delta_m)_+ (\partial_{e^*} \Tilde{u}  -\kappa_m)_-   \zeta_m]\|^2_{L^2(B_{r_m})} \\
    &\leq C 2^{2(m+3)} |A_m|^{\frac{2}{n}} |A_m|^{1-\frac{2}{n+\sigma}} \\
    &= C 2^{2(m+3)} Y^{1+\frac{2\beta}{n}}_m 
\end{align*}
with~$C=C(C_\F(\delta),\hat{C}_\F,\delta,\|f\|_{L^{n+\sigma}(B_R)},M,N,n,R,R_E,\sigma)$. Due to the bound for~$\delta_{m+1} - \delta_m$ and~$\kappa_m - \kappa_{m+1}$, there holds
$$Y_{m+1} \leq C 2^{6(m+3)} Y^{1+\frac{2\beta}{n}}_m \leq C 2^{6m} Y^{1+\frac{2\beta}{n}}_m$$
with~$C=C(C_\F(\delta),\hat{C}_\F,\delta,\|f\|_{L^{n+\sigma}(B_R)},M,N,n,R,r_E,\sigma)$, where we additionally exploited the estimate~\eqref{deltamu} in order to estimate~$\frac{1}{\mu^4}\leq \frac{1}{\delta^4}$. At this point, we fix~$e^*\in\partial E^*$ through the specific choice of~$\Tilde{e}^*\in \partial E^*$, which denotes the parameter satisfying~\eqref{lowerboundscaled}. Finally, we apply the geometric convergence Lemma~\ref{geometriclem} with the choices~$b\coloneqq 2^6$ and~$\kappa \coloneqq \frac{2\beta}{n}$. The assumption of the lemma, which we recall is given by
$$Y_0 \leq C^{-\frac{1}{\kappa}}b^{-\frac{1}{\kappa2}},$$  
is satisfied provided the parameter~$\nu\in(0,\frac{1}{4}]$ is chosen small enough in dependence of the data
$$ (C_\F(\delta),\hat{C}_\F,\delta,\|f\|_{L^{n+\sigma}(B_R)},M,N,n,R,r_E,\sigma). $$
This matter of fact can be inferred from the bound
\begin{align*}
    Y_0 &= |B_1 \cap \{1+\delta < \partial_{\Tilde{e}^*}\Tilde{u}<1+\delta+\mbox{$\frac{\mu}{2}$} \}| \\
    &\leq |B_1 \cap \{ \partial_{\Tilde{e}^*}\Tilde{u} <1+\delta+\textstyle{\frac{\mu}{2}} \}| \\
    &\leq \nu|B_1|
\end{align*}
for~$\nu \in (0,\frac{1}{4}]$. An application of Lemma~\ref{geometriclem} yields the convergence~$Y_m \to 0$ as~$m\to\infty$, which implies
$$ \big|B_{\frac{1}{2}}\cap \{1+\delta+\mbox{$\frac{\mu}{8}$}\leq \partial_{\Tilde{e}^*}\Tilde{u} \leq 1+\delta+ \mbox{$\frac{\mu}{4}$} \} \big|=0.$$
We now argue that either there holds~$\partial_{\Tilde{e}^*}\Tilde{u} \leq 1 + \delta+\frac{\mu}{8}$ a.e. in~$B_{\frac{1}{2}}$, or~$\partial_{\Tilde{e}^*}\Tilde{u} \geq 1 + \delta + \frac{\mu}{4}$ is satisfied a.e. in~$B_{\frac{1}{2}}$, while the first alternative cannot arise due to the measure-theoretic information~\eqref{lowerboundscaled}. We exploit the fact that~$\partial_{\Tilde{e}^*}\Tilde{u}$ is weakly differentiable due to the regularity~$\Tilde{u}\in W^{1,\infty}(B_1)\cap W^{2,2}(B_1)$ and the fact that any Sobolev function is locally absolutely continuous along any coordinate axis. Let us therefore argue by contradiction and assume without any loss of generality that there holds~$\partial_{\Tilde{e}^*}\Tilde{u} \leq 1+\delta+\frac{\mu}{8}$ a.e. in~$\Tilde{B}$ as well as~$\partial_{\Tilde{e}^*}\Tilde{u} \geq 1+\delta+\frac{\mu}{4}$ a.e. in~$B_{\frac{1}{2}} \setminus \Tilde{B}$ on some subset~$\Tilde{B}\subset B_{\frac{1}{2}}$. But then, there exists at least one index~$i\in\{1,\ldots,n\}$ and a direction~$e_i \in\R^n$, a point~$x\in \partial \Tilde{B}$, and a small parameter~$\sigma>0$, such that the mapping
$$(-\sigma,\sigma) \ni s \mapsto \partial_{\Tilde{e}^*}\Tilde{u}(x+s e_i )$$
exhibits a discontinuity at~$s=0$. Referring to the result in~\cite[Theorem~4.20 (i) resp. Theorem~4.21 (i)]{evansgariepy}, we obtain a contradiction, due to the fact that~$\partial_{\Tilde{e}^*}\Tilde{u}$ is locally absolutely continuous along any coordinate axis, as explained above. This excludes the case where~$\partial_{\Tilde{e}^*}\Tilde{u}\leq 1+\delta+\frac{\mu}{8}$ a.e. on a subset of~$B_{\frac{1}{2}}$ and thus yields~$\partial_{\Tilde{e}^*}\Tilde{u} \geq 1+\delta+\frac{\mu}{4}$ a.e. in~$B_{\frac{1}{2}}$. Scaling back to the original solution~$u$ eventually yields the claimed estimate~\eqref{lowerboundest}, which follows quickly from the fact~\eqref{minkowskialternativ} that implies
$$1+\delta+\frac{\mu}{4} \leq \partial_{\Tilde{e}^*} u \leq |Du|_E \qquad\mbox{a.e. in~$B_{\frac{\rho}{2}}(x_0)$}.$$ 
This finishes the proof. 
\end{proof}


The lower bound~\eqref{lowerboundest} from the preceding Proposition~\ref{lowerboundprop} implies that for any~$i=1,\ldots,n$ the partial derivative~$D_i u_\epsilon$ is a weak solution to a linear elliptic equation with bounded coefficients, whose ellipticity constant only depends on the parameter~$\delta\in(0,1]$ and is independent of~$\epsilon$. This expedient property allows us to obtain a quantitative excess-decay estimate for the excess-functional~\eqref{excess}, constituting the major component for the proof of Proposition~\ref{nondegenerateproposition}. Hereby, we rely on the following quantitative local~$L^\infty$-estimate and quantitative oscillation estimate.


\begin{myproposition} \label{lem:gilbargtrudinger}
    Let~$\delta,\epsilon\in(0,1]$ and~$u_\epsilon$ denote the weak solution to~\eqref{approx}. Then, there exists a constant~$C\geq 1$ and a parameter~$\alpha\in(0,1)$, both depending on
    \begin{align*}
        C&=C(C_\F(\delta),\hat{C}_\F,\delta,\|f\|_{L^{n+\sigma}(B_R)},M,N,n,r_E,\sigma), \\
        \alpha&= \alpha(C_\F(\delta),\hat{C}_\F,\delta,\|f\|_{L^{n+\sigma}(B_R)},M,N,n,r_E,\sigma),
    \end{align*}
    such that for any~$i\in\{1,\ldots,n\}$ there holds the quantitative local~$L^\infty$-estimate
    \begin{align} \label{est:gilbargtrudingersup}
        \esssup\limits_{B_{\frac{\rho}{2}}(x_0)} |D_i u_\epsilon-(D_i u_\epsilon)_{x_0,\rho}| \leq C \bigg(\fint_{B_{\rho}(x_0)}|D_i u_\epsilon-(D_i u_\epsilon)_{x_0,\rho}|^2\,\dx + (\rho^\beta+\rho^{2\beta}) \bigg).
    \end{align}
    Moreover, for any~$\theta\in(0,\frac{1}{2}]$ there holds the quantitative oscillation estimate
    \begin{align} \label{est:gilbargtrudingerosc}
      \essosc\limits_{B_{\theta\rho}(x_0)} \big[D_i u_\epsilon-(D_i u_\epsilon)_{x_0,\rho} \big] \leq C \bigg( \theta^{\alpha} \esssup\limits_{B_{\frac{\rho}{2}}(x_0)} |D_i u_\epsilon-(D_i u_\epsilon)_{x_0,\rho}| +\theta^{\alpha}\rho^{\alpha}(\rho^\beta+\rho^{2\beta})  \bigg).
    \end{align}
\end{myproposition}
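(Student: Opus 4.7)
The plan is to leverage the lower bound established in Proposition~\ref{lowerboundprop}, which is available in the present non-degenerate regime, so as to recast the differentiated equation as a linear uniformly elliptic equation with bounded measurable coefficients, and then to invoke the classical De~Giorgi--Nash--Moser theory from~\cite{gilbargtrudinger}.

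First I would record that by Proposition~\ref{lowerboundprop} together with the upper bound~\eqref{minkowskischranke} we have
$$
1+\delta+\tfrac{\mu}{4} \leq |Du_\epsilon|_E \leq M \qquad\text{a.e. on } B_{\frac{\rho}{2}}(x_0).
$$
In view of Lemma~\ref{bilinearelliptic} and the bound~\eqref{hesseschrankedelta}, the matrix $A(x) := \nabla^2 \hat{\F}_\epsilon(x, Du_\epsilon(x))$ is then bounded and uniformly elliptic on $B_{\frac{\rho}{2}}(x_0)$, with ellipticity constants depending only on $\delta$ and $C_\F(\delta)$. Using $u_\epsilon \in W^{2,2}_{\loc}(B_R)$ from Proposition~\ref{approxregularityeins}, I would next differentiate the weak form~\eqref{weakformapprox} in the direction $e_i$ by means of a standard difference-quotient argument, so as to see that $w_i := D_i u_\epsilon - (D_i u_\epsilon)_{x_0,\rho}$ is a weak solution to
$$
\divv\bigl(A(x)\,D w_i\bigr) = \divv F_i(x) - \partial_{e_i} f \qquad \text{in } B_{\frac{\rho}{2}}(x_0),
$$
where $F_i(x) := -\partial_{x_i}\hat{\h}_\epsilon(x, Du_\epsilon(x))$ is bounded uniformly in $\epsilon$ thanks to~\eqref{hatflipschitz}, and $\partial_{e_i} f$ is to be interpreted distributionally with $f \in L^{n+\sigma}$.

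With $w_i$ identified as a weak solution of a linear uniformly elliptic equation with bounded measurable coefficients, bounded inhomogeneous flux $F_i$, and an $L^{n+\sigma}$-datum, the local $L^\infty$-bound~\eqref{est:gilbargtrudingersup} would follow by directly invoking the Moser-type estimate for weak sub/super-solutions, see for instance~\cite[Theorem~8.17]{gilbargtrudinger}, applied to $w_i$ on $B_{\frac{\rho}{2}}(x_0)$. The oscillation decay~\eqref{est:gilbargtrudingerosc} then follows from the classical De~Giorgi--Nash Hölder estimate~\cite[Theorem~8.22]{gilbargtrudinger} applied on $B_{\theta\rho}(x_0) \subset B_{\frac{\rho}{2}}(x_0)$ for $\theta \in (0, \frac{1}{2}]$. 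The only remaining point is the correct tracking of the powers of $\rho$ in the inhomogeneous contribution: after performing the change of variables $y = (x-x_0)/\rho$ and applying Hölder's inequality to $f$ as in the proof of Proposition~\ref{lowerboundprop}, one recovers precisely the additive terms involving $\rho^\beta$ and $\rho^{2\beta}$, with $\beta = \sigma/(n+\sigma)$ as in~\eqref{beta}. I expect the only genuine technical subtlety to lie in this bookkeeping and in the rigorous difference-quotient justification of the differentiated equation in the $e_i$-direction, the rest being routine linear theory.
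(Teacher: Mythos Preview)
Your proposal is correct and follows essentially the same route as the paper: you invoke Proposition~\ref{lowerboundprop} to obtain the lower bound on $|Du_\epsilon|_E$, differentiate the weak form to see that $D_i u_\epsilon - (D_i u_\epsilon)_{x_0,\rho}$ solves a linear uniformly elliptic equation with bounded inhomogeneity coming from~\eqref{hatflipschitz} and the datum $f$, and then apply \cite[Theorems~8.17 and~8.22]{gilbargtrudinger}. The paper's proof is almost identical, differing only in that it tests with $D_i\phi$ and integrates by parts rather than framing the differentiation via difference quotients.
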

  
\begin{proof}
    We differentiate the weak form of~$u_\epsilon$ by testing the latter with~$D_i\phi$, where~$\phi\in C^\infty_0(B_{\frac{\rho}{2}}(x_0))$ denotes an arbitrary smooth test function, and perform an integration by parts in the diffusion term. Indeed, we are allowed to differentiate the diffusion term due to Proposition~\ref{lowerboundprop}, which verifies that~$B_{\frac{\rho}{2}}(x_0))\ni x \mapsto \nabla\hat{\F}_\epsilon(x,Du_\epsilon(x))$ is differentiable as there holds~$|Du_\epsilon|_E\geq 1+\delta+\frac{\mu}{4}$ on this set of points. Thus, we obtain that for any~$i=1,\ldots,n$, the function~$v\coloneqq D_i u_\epsilon$ is a weak solution to
\begin{equation} \label{sollinell}
      \int_{B_{\frac{\rho}{2}}(x_0)} \hat{\B}_\epsilon(x,Du_\epsilon)(D v,D\phi)\,\dx =  \int_{B_{\frac{\rho}{2}}(x_0)} \bigg( f D_i\phi - \sum\limits_{j=1}^n g^i_j D_j \phi \bigg) \,\dx,
\end{equation}
 where the bounded coefficients~$g^i \colon B_{\frac{\rho}{2}}(x_0) \times \R^n \to \R^n$ are given by
$$|g^i(x)|\coloneqq |\partial_{x_i} \nabla \hat{\F}_\epsilon(x,Du_\epsilon)| \leq C(\hat{C}_F,N,r_E),
$$
with the bound being an immediate consequence of the Lipschitz estimate~\eqref{hatflipschitz}. Due to Proposition~\ref{lowerboundprop}, the bilinear form~$\hat{\mathcal{B}}_\epsilon(x,Du_\epsilon)$ is elliptic and bounded on the whole of~$\R^n$ for any~$x\in B_{\frac{\rho}{2}}(x_0)$, with an ellipticity constant that is independent of the parameter~$\epsilon\in(0,1]$, i.e. there exists a positive constant~$C=C(C_\F(\delta),\delta)\geq 1$ with
\begin{align} \label{ellipticityohneeps}
  C^{-1}|\xi|^2 \leq \hat{\mathcal{B}}_\epsilon(x,Du_\epsilon)(\xi,\xi) \leq C|\xi|^2  
\end{align}
for any~$x\in B_{\frac{\rho}{2}}(x_0)$ and any~$\xi\in\R^n$. The preceding bound readily follows from Lemma~\ref{bilinearelliptic} and also Proposition~\ref{lowerboundprop}, implying that there holds~$|Du_\epsilon|_E \geq 1+\frac{5}{4}\delta$ a.e. in~$B_{\frac{\rho}{2}}(x_0)$ due to~\eqref{deltamu}. Moreover, we can choose~$C$ to depend on~$C_\F(\delta)$ rather than~$C_\F$, according to~\eqref{hatfvoraussetzung}, due to the set inclusion
$$ \{ \xi\in\R^n:|\xi|_E\geq \mbox{$\frac{5}{4}$}\delta\} \subset \R^n\setminus E_\delta. $$
Put differently,~$v=D_i u_\epsilon$ is a weak solution to a linear equation with elliptic and bounded coefficients. We note that~$D_i u_\epsilon - (D_i u_\epsilon)_{x_0,\rho}$ is still a weak solution to~\eqref{sollinell}. Both claimed estimates~\eqref{est:gilbargtrudingersup} -- \eqref{est:gilbargtrudingerosc} are now direct consequences of the classical results found in~\cite[Chapter~8.6,~Theorem~8.17]{gilbargtrudinger} and also~\cite[Chapter~8.9,~Theorem~8.22]{gilbargtrudinger}.
\end{proof}


Proposition~\ref{lem:gilbargtrudinger} allows us to the deduce a quantitative excess-decay estimate for the excess-functional~\eqref{excess}, stated in the subsequent lemma.


\begin{mylem} \label{excessdecay}
Let~$\delta,\epsilon\in(0,1]$ and~$u_\epsilon$ denote the weak solution to~\eqref{approx}. Then, there exists a constant~$\Tilde{C}=\Tilde{C}\geq 1$ and an exponent~$\Tilde{\alpha}=\Tilde{\alpha}\in(0,1)$, both depending on the data
\begin{align*}
    \Tilde{C}&=\Tilde{C}(C_\F(\delta),\hat{C}_\F,\delta,\|f\|_{L^{n+\sigma}(B_R)},M,N,n,r_E,\sigma), \\
    \Tilde{\alpha}&=\Tilde{\alpha}(C_\F(\delta),\hat{C}_\F,\delta,\|f\|_{L^{n+\sigma}(B_R)},M,N,n,r_E,\sigma),
\end{align*}
such that
\begin{equation} \label{excessdecayest}
   \Phi(x_0,\theta\rho)\leq \Tilde{C} \theta^{2\Tilde{\alpha}} \big(\Phi(x_0,\rho)+\rho^{\Tilde{\alpha}}\mu^2\big)
\end{equation}
holds true for any~$\theta\in(0,1]$, provided there holds~$\rho\in(0,1]$.
\end{mylem}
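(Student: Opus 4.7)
The aim is to combine the two quantitative bounds in Proposition~\ref{lem:gilbargtrudinger} into the single decay statement~\eqref{excessdecayest}. Writing $Du_\epsilon=(D_1u_\epsilon,\ldots,D_nu_\epsilon)$ componentwise and exploiting that the mean value is the $L^2$-optimal constant, I would first reduce
\begin{align*}
    \Phi(x_0,\theta\rho)
    \leq \sum_{i=1}^n\fint_{B_{\theta\rho}(x_0)}\bigl|D_iu_\epsilon-(D_iu_\epsilon)_{x_0,\rho}\bigr|^2\,\dx
    \leq \sum_{i=1}^n\bigl(\essosc_{B_{\theta\rho}(x_0)} D_iu_\epsilon\bigr)^2,
\end{align*}
so that it suffices to control the componentwise oscillations on~$B_{\theta\rho}(x_0)$.

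For the range $\theta\in(0,\tfrac12]$ I would insert~\eqref{est:gilbargtrudingerosc} and then~\eqref{est:gilbargtrudingersup} (squared) to replace the essential supremum on~$B_{\rho/2}(x_0)$ by the $L^2$-excess on~$B_{\rho}(x_0)$, absorbing all extra factors of $\rho$ via $\rho\in(0,1]$. Summing over $i=1,\ldots,n$ and noting that $(D_iu_\epsilon)_{x_0,\rho}$ is precisely the $L^2$-minimizing constant on~$B_\rho(x_0)$, the sum of the integrals reproduces~$\Phi(x_0,\rho)$, yielding the intermediate estimate
\begin{align*}
    \Phi(x_0,\theta\rho)\leq C\theta^{2\alpha}\bigl(\Phi(x_0,\rho)+\rho^{2\beta}\bigr)\qquad\text{for all }\theta\in(0,\tfrac12],\ \rho\in(0,1],
\end{align*}
where $\alpha$ and $C$ are inherited from Proposition~\ref{lem:gilbargtrudinger}.

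It then remains to recast the datum term $\rho^{2\beta}$ in the form $\rho^{\tilde\alpha}\mu^2$. I would set $\tilde\alpha\coloneqq\min\{\alpha,2\beta\}\in(0,1)$, so that $\theta^{2\alpha}\leq\theta^{2\tilde\alpha}$ and $\rho^{2\beta-\tilde\alpha}\leq 1$ for $\theta,\rho\in(0,1]$. The non-degenerate hypothesis~\eqref{deltamu}, i.e.\ $\delta<\mu$, then forces $1\leq\delta^{-2}\mu^2$, hence $\rho^{2\beta}\leq\delta^{-2}\rho^{\tilde\alpha}\mu^2$; absorbing $\delta^{-2}$ into a new constant $\tilde C$ yields~\eqref{excessdecayest} in the range $\theta\in(0,\tfrac12]$. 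The remaining range $\theta\in(\tfrac12,1]$ is immediate: the ball inclusion $B_{\theta\rho}(x_0)\subseteq B_\rho(x_0)$ together with the minimizing mean property gives
\begin{align*}
    \Phi(x_0,\theta\rho)\leq \theta^{-n}\Phi(x_0,\rho)\leq 2^{n+2\tilde\alpha}\,\theta^{2\tilde\alpha}\Phi(x_0,\rho),
\end{align*}
which is swallowed by~\eqref{excessdecayest} after a further enlargement of $\tilde C$.

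I expect no serious obstacle here since Proposition~\ref{lem:gilbargtrudinger} supplies exactly the two ingredients needed. The only mildly delicate point is the passage from the purely datum-driven error~$\rho^{2\beta}$ to the unified form $\rho^{\tilde\alpha}\mu^2$: this step critically invokes the standing lower bound $\mu>\delta$ of the non-degenerate regime and explains why the resulting constant~$\tilde C$ must depend on~$\delta$.
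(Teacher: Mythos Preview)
Your proposal is correct and follows essentially the same route as the paper: reduce $\Phi(x_0,\theta\rho)$ to componentwise oscillations, feed in first the oscillation bound~\eqref{est:gilbargtrudingerosc} and then the $L^\infty$-excess bound~\eqref{est:gilbargtrudingersup}, sum over $i$, and convert the residual power of $\rho$ into $\rho^{\tilde\alpha}\mu^2$ via $\mu>\delta$; the large-$\theta$ range is handled trivially in both. The only cosmetic difference is that the paper sets $\tilde\alpha=\min\{\alpha,\beta\}$ whereas you take $\tilde\alpha=\min\{\alpha,2\beta\}$, which is harmless (indeed slightly sharper), and your explicit invocation of~\eqref{deltamu} to produce the factor $\mu^2$ makes transparent a step the paper leaves implicit.
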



\begin{proof}

We note that in the case where~$\theta\in(\frac{1}{2},1]$, the claimed excess-decay estimate~\eqref{excessdecayest} readily follows by an application of the~$L^2$-minimality of the mean value
\begin{equation*}
    \Phi(x_0,\theta\rho) \leq C(n)\Phi(x_0,\rho) \leq 2^{2\Tilde{\alpha}}C(n)\theta^{2\Tilde{\alpha}} \Phi(x_0,\rho) \leq C(n)\Phi(x_0,\rho)
\end{equation*}
for any~$\Tilde{\alpha}\in(0,1)$. Therefore, let us treat the case where~$\theta\in(0,\frac{1}{2}]$. Consequently, we are is position to apply both estimates~\eqref{est:gilbargtrudingerosc} as well as~\eqref{est:gilbargtrudingersup} from the previous Proposition~\ref{lem:gilbargtrudinger}, and use the assumption~$\rho\in(0,1]$ and~\eqref{schrankezwei}, to obtain
\begin{align*}
    \fint_{B_{\theta\rho}(x_0)}|D_i & u_\epsilon - (D_i u_\epsilon)_{x_0,\theta\rho}|^2\,\dx \\
    &\leq \bigg(\essosc\limits_{B_{\theta\rho}(x_0)} D_i u_\epsilon \bigg)^2 \\
    &= \bigg(\essosc\limits_{B_{\theta\rho}(x_0)} [D_i u_\epsilon - (D_i u_\epsilon)_{x_0,\rho}] \bigg)^2 \\
    &\leq C \Bigg(\theta^{2\alpha} \esssup\limits_{B_{\frac{\rho}{2}}(x_0)} |D_i u_\epsilon - (D_i u_\epsilon)_{x_0,\rho}|^2
 +\theta^{2\alpha}\rho^{2\alpha}\rho^{2\beta}\Bigg) \\
    &\leq C \Bigg(\theta^{2\alpha} \fint_{B_{\rho}(x_0)}|D_i u_\epsilon - (D_i u_\epsilon)_{x_0,\rho}|^4\,\dx +  \theta^{2\alpha} \big( \rho^{2\alpha} +  \rho^\beta +  \rho^{2\beta} \big) \Bigg) \\
    &\leq \Tilde{C} \Bigg(\theta^{2\Tilde{\alpha}} \fint_{B_{\rho}(x_0)}|D_i u_\epsilon - (D_i u_\epsilon)_{x_0,\rho}|^2\,\dx +  \theta^{2\Tilde{\alpha}}\rho^{\Tilde{\alpha}}\mu^2\Bigg)
\end{align*}
with a constant $\Tilde{C}=\Tilde{C}(C_\F(\delta),\hat{C}_\F,\delta,\|f\|_{L^{n+\sigma}(B_R)},M,N,n,r_E,\sigma)\geq 1$ and an exponent $\Tilde{\alpha}\coloneqq \min\{\alpha,\beta\}\in(0,1)$, where the parameter $\alpha\in(0,1)$, which depends on the same data, denotes the exponent arising from the application of~\eqref{est:gilbargtrudingerosc} from Lemma~\ref{lem:gilbargtrudinger} and~$\beta\in(0,1)$ is given in~\eqref{beta}. Since $i\in\{1,\ldots,n\}$ was chosen arbitrarily, we sum up with respect to $i=1,\ldots,n$ and eventually achieve the claimed estimate~\eqref{excessdecayest}.    
\end{proof}


At this point, all ingredients are in place to treat the proof of Proposition~\ref{nondegenerateproposition}.


\begin{proof}[\textbf{\upshape Proof of Proposition~\ref{nondegenerateproposition}}]

Let~$\Tilde{\alpha}\in(0,1)$ and~$\Tilde{C}$ denote the exponent and constant from the preceding Lemma~\ref{excessdecay}. We choose the parameter 
$$\theta\coloneqq \min \bigg\{\frac{4 r^2_E M^2+\delta^2}{\Tilde{C}\delta^2},(2\Tilde{C})^{-\frac{1}{2(\Tilde{\alpha}-\alpha)}},2^{-\frac{1}{\alpha}} \bigg\},$$
where~$\alpha\in(0,\Tilde{\alpha})$ denotes an arbitrary exponent. Further, we define the radius
$$\Tilde{\rho}\coloneqq \min\big\{\theta^{2\frac{\Tilde{\alpha}-\alpha}{\Tilde{\alpha}}},1 \big\},$$
such that~$\theta$ and~$\Tilde{\rho}$ both depend on the data 
$$(C_\F(\delta),\hat{C}_\F,\delta,\|f\|_{L^{n+\sigma}(B_R)},M,N,n,r_E,\sigma).$$
In what follows, let us consider a radius~$\rho\in(0,\Tilde{\rho}]$ and a ball~$B_{2\rho}(x_0)\Subset B_R$. We note that the general bound~\eqref{schrankeeins} yields an estimate for the excess-functional
\begin{align} \label{decayabsch}
    \Phi(x_0,\rho) \leq 4 \Tilde{M}^2 = 4 r^2_E M^2.
\end{align}
We will now show inductively that for any~$i\in\N$ there holds
\begin{equation} \label{excessiterated}
    \Phi(x_0,\theta^i \rho) \leq \theta^{2\alpha i}\mu^2,
\end{equation}
where we refer to~$\eqref{excessiterated}_i$ in the~$i$-th step of the iteration. We commence by treating the case~$i=1$. Our choices of~$\theta$ and~$\Tilde{\rho}$, together with an application of Lemma~\ref{excessdecay} and exploiting the bound~\eqref{decayabsch}, yield the estimate
\begin{align*}
    \Phi(x_0,\theta\rho) \leq \Tilde{C}\theta^{2\Tilde{\alpha}} \big(\Phi(x_0,\rho)+\rho^{\Tilde{\alpha}}\mu^2\big) \leq \Tilde{C}\theta^{2\Tilde{\alpha}} \bigg(\frac{4r^2_EM^2}{\delta^2}+1\bigg) \mu^2 \leq \theta^{2\alpha}\mu^2,
\end{align*}
which establishes that~$\eqref{excessiterated}_1$ holds true. Next, let us consider the case where~$i>1$ and assume that~$\eqref{excessiterated}_{i-1}$ is satisfied. Additionally, we apply Lemma~\ref{excessdecay} and utilize the choices of~$\theta$ and~$\Tilde{\rho}$, to obtain
\begin{align*}
    \Phi(x_0,\theta^{i}\rho) &\leq \Tilde{C}\theta^{2\Tilde{\alpha}} \big(\Phi(x_0,\theta^{i-1}\rho)+\theta^{2\Tilde{\alpha}(i-1)}\rho^{\Tilde{\alpha}}\mu^2\big) \\
    &\leq \Tilde{C}\theta^{2\Tilde{\alpha}} \big(\theta^{2\alpha(i-1)}\mu^2+\theta^{2\alpha (i-1)} \mu^2\big) \\
    & = 2\Tilde{C} \theta^{2(\Tilde{\alpha}-\alpha)} \theta^{2\alpha i} \mu^2 \\
    & \leq \theta^{2\alpha i} \mu^2.
\end{align*}
This verifies that also~$\eqref{excessiterated}_i$ is satisfied. We continue by treating both assertions~\eqref{lebesguerepresentant} and~\eqref{excessgdelta}. From Lemma~\ref{gdeltalem} and~$\eqref{excessiterated}_{i}$, we achieve for any~$i\geq 2$ the following
\begin{align} \label{intmedstep}
    \Tilde{\Phi}(x_0,\theta^{i-1} \rho) &\coloneqq \fint_{B_{\theta^{i-1} \rho}(x_0)} |\G_{2\delta}(Du)-(\G_{2\delta}(Du))_{x_0,\theta^{i-1}}\rho|^2\,\dx \\
    &\leq  \fint_{B_{\theta^{i-1} \rho}(x_0)} |\G_{2\delta}(Du)-\G_{2\delta}((Du)_{x_0,\theta^{i-1}\rho})|^2\,\dx \nonumber \\
    &\leq C \Phi(x_0,\theta^{i-1}\rho) \leq C \theta^{2\alpha(i-1)}\mu^2 \nonumber
\end{align}
with~$C=C(R_E,r_E)$. This estimate further yields
\begin{align*}
    |(\G_{2\delta}(Du))_{x_0,\theta^{i-1}\rho}   - (\G_{2\delta}(Du))_{x_0,\theta^i\rho}|^2 
    &\leq \fint_{B_{\theta^i \rho}(x_0)} |\G_{2\delta}(Du)-(\G_{2\delta}(Du))_{x_0,\theta^{i-1}\rho}|^2\,\dx  \\ 
    &\leq \frac{1}{\theta^n} \Tilde{\Phi}(x_0,\theta^{i-1} \rho) \leq \frac{C}{\theta^n} \theta^{2\alpha(i-1)}\mu^2. 
\end{align*}
Let us now consider natural numbers~$l<m$ and take roots in the preceding estimate. Due to the fact that~$\theta^\alpha \leq \frac{1}{2}$, we thus obtain
\begin{align*}
    |(\G_{2\delta}(Du))_{x_0,\theta^{l}\rho} - (\G_{2\delta}(Du))_{x_0,\theta^m\rho}|
    &\leq \sum\limits_{i=l+1}^{m} |(\G_{2\delta}(Du))_{x_0,\theta^{i-1}\rho}  -(\G_{2\delta}(Du))_{x_0,\theta^i\rho}| \\
    &\leq C\theta^{-\frac{n}{2}} \sum\limits_{i=l+1}^{m} \theta^{\alpha (i-1)} \mu \leq C\theta^{-\frac{n}{2}} \frac{\theta^{\alpha l}}{1-\theta^\alpha} \mu \leq C \theta^{-\frac{n}{2}} \theta^{\alpha l}\mu
\end{align*}
with a constant~$C=C(R_E,r_E)$. Thus, we have established that~$((\G_{2\delta}(Du))_{x_0,\theta^{i}\rho} )_{i\in\N}$ is a Cauchy sequence in~$\R^n$. Let us denote its limit by
$$\Gamma_{x_0} \coloneqq \lim\limits_{i\to\infty} (\G_{2\delta}(Du))_{x_0,\theta^{i}\rho}.$$
Next, we pass to the limit~$m\to\infty$ in the preceding estimate to derive
\begin{align*}
    |(\G_{2\delta}(Du))_{x_0,\theta^{l}\rho}-\Gamma_{x_0}| \leq C \theta^{-\frac{n}{2}} \theta^{\alpha l}\mu \qquad \mbox{for any~$l\in\N$.}
\end{align*}
Together with estimate~\eqref{intmedstep}, this yields
\begin{align*}
    \fint_{B_{\theta^l \rho}(x_0)} |\G_{2\delta}(Du)-\Gamma_{x_0}|^2\,\dx &\leq 2\Tilde{\Phi}(x_0,\theta^{l}\rho) + 2 |(\G_{2\delta}(Du))_{x_0,\theta^{l}\rho}-\Gamma_{x_0}|^2 \\
    &\leq C \theta^{2\alpha l}\mu^2 + C \theta^{-n} \theta^{\alpha 2l}\mu^2 \\
    &\leq C \theta^{-n} \theta^{2\alpha l}\mu^2.
\end{align*}
As a last step, this estimate is converted into the excess-decay~\eqref{excessgdelta}. Let~$r\in(0,\rho]$ and choose~$l\in\N$, such that~$\theta^{l+1}\rho <r \leq \theta^{l}\rho$ holds true. Due to our choice of~$\theta$ and the preceding inequality, we obtain 
\begin{align*}
    \fint_{B_r(x_0)} |\G_{2\delta}(Du) - \Gamma_{x_0}|^2\,\dx &\leq \frac{1}{\theta^n} \fint_{B_{\theta^{l}\rho}(x_0)} |\G_{2\delta}(Du) - \Gamma_{x_0}|^2\,\dx \\
    &\leq C\theta^{-2n} \theta^{2\alpha l}\mu^2 \\
    &\leq \Tilde{C}\Big(\frac{r}{\rho} \Big)^{2\alpha}\mu^2
\end{align*}
with~$\Tilde{C}=\Tilde{C}(C_\F(\delta),\hat{C}_\F,\delta,\|f\|_{L^{n+\sigma}(B_R)},M,N,n,R_E,r_E,\sigma)$. Finally, applying Jensen's inequality together with the above estimate, we achieve
\begin{align*}
    |(\G_{2\delta}(Du))_{x_0,r} - \Gamma_{x_0}|^2 \leq \fint_{B_r(x_0)} |\G_{2\delta}(Du) - \Gamma_{x_0}|^2\,\dx\dt \leq C\Big(\frac{r}{\rho} \Big)^{2\alpha}\mu^2,
\end{align*}
which establishes that
$$\Gamma_{x_0} = \lim\limits_{r\downarrow 0} (\G_{2\delta}(Du))_{x_0,r}.$$
The bound~\eqref{lebesguebound} is a consequence of~\eqref{betragminkowski} and estimate~\eqref{schrankeeins}, which yield~$|(\G_{2\delta}(Du))_{x_0,r}| \leq R_E\mu$ for any~$r\in(0,\rho]$. This finishes the proof of the proposition and also the non-degenerate regime.
\end{proof}


\section{The degenerate regime} \label{sec:degenerate}
In this section, we aim to demonstrate the proof of Proposition~\ref{degenerateproposition}, which outlines our primary result in the degenerate regime. To facilitate this proof, we will examine the function defined by 
$$v_\epsilon \coloneqq (\partial_{e^*} u_\epsilon -(1+\delta))^2_+,$$
where~$\delta,\epsilon\in(0,1]$ and~$e^*\in\partial E^*$. As we will establish throughout this section,~$ v_\epsilon$ turns out to be a weak sub-solution to a linear elliptic equation, as indicated in the format of equation \eqref{subsolungl}. By applying the De Giorgi-type estimate from Lemma~\ref{degiorgilem}, which will serve as a key tool in our argument, we are able to derive a quantitative reduction of the supremum of~$v_\epsilon$ within a smaller ball.\,\\

We next aim to establish the fact that for any~$e^*\in\partial E^*$, the mapping
$$v_\epsilon = (\partial_{e^*} u_\epsilon -(1+\delta))^2_+,$$
is a weak sub-solution to a linear elliptic equation. For this matter, we will differentiate the weak form~\eqref{weakformapprox} with respect to~$x_{e^*}$ by testing the weak form with~$\partial_{e^*}\phi$, where~$\phi\in C^\infty_0(B_R)$ denotes an arbitrary smooth test function, as already performed similarly throughout the proof of Proposition~\ref{lowerboundprop}. Recall that this procedure is admissible since second order weak derivatives of~$u_\epsilon$ exist and are locally square integrable according to Proposition~\ref{approxregularityeins}, while~$u_\epsilon$ is additionally of class~$W^{1,\infty}_{\loc}(B_R)$. We start with the subsequent lemma.


\begin{mylem} \label{mainintegralin}
    Let~$\delta,\epsilon\in(0,1]$,~$e^*\in\partial E^*$, and~$u_\epsilon$ denote the weak solution to~\eqref{approx}. Then, there holds the energy estimate
    \begin{align} \label{eeabsorbed}
      &(\epsilon +\Tilde{C}(\delta)) \int_{B_R} \zeta |\partial_{e^*} Du_\epsilon|^2 \bigchi_{\{\partial_{e^*} u_\epsilon>1+\delta\}}  \,\dx + \int_{B_R} \B_\epsilon(x,Du_\epsilon)(\partial_{e^*}Du_\epsilon,D\zeta) (\partial_{e^*}u_\epsilon-(1+\delta))_+ \,\dx \\
    &\leq  C \int_{B_R} |D\zeta| (\partial_{e^*}u_\epsilon-(1+\delta))_+ \big(1+|f| \big) \,\dx + C \int_{B_R} \zeta \bigchi_{\{\partial_{e^*}u_\epsilon>1+\delta\}} \big( 1+|f|^2 \big) \,\dx \nonumber
\end{align}
for any non-negative~$\zeta\in C^1_0(B_R,[0,1])$, with constant~$C=C(C_\F(\delta),\hat{C}_\F,\delta,N,n,R_E)$.
\end{mylem}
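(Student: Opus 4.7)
The plan is to differentiate the weak form~\eqref{weakformapprox} in direction $e^*$ and then test the resulting identity with $\phi=\zeta(\partial_{e^*}u_\epsilon-(1+\delta))_+$. Proposition~\ref{approxregularityeins} provides $u_\epsilon\in W^{2,2}_{\loc}(B_R)$, and the quadratic growth~\eqref{quadraticgrowth} together with $f\in L^2_{\loc}(B_R)$ allows the weak form to be extended by density to all $W^{1,2}_0(B_R)$-test functions. A standard difference-quotient argument then yields the differentiated identity
\begin{equation*}
\int_{B_R}\langle\partial_{e^*}\hat{\h}_\epsilon(x,Du_\epsilon),D\phi\rangle\,\dx=\int_{B_R}f\,\partial_{e^*}\phi\,\dx
\end{equation*}
for every $\phi\in W^{1,2}_0(B_R)$, together with the a.e. chain-rule decomposition $\partial_{e^*}\hat{\h}_\epsilon(x,Du_\epsilon)=\mathcal{B}_\epsilon(x,Du_\epsilon)\partial_{e^*}Du_\epsilon+\partial_{x_{e^*}}\hat{\h}_\epsilon(x,Du_\epsilon)$ on $\{|Du_\epsilon|_E>1\}$. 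The chosen $\phi$ lies in $W^{1,2}_0(B_R)$ thanks to the $W^{2,2}_{\loc}$-regularity, and its gradient
\begin{equation*}
D\phi=D\zeta\,(\partial_{e^*}u_\epsilon-(1+\delta))_++\zeta\,\partial_{e^*}Du_\epsilon\,\bigchi_{\{\partial_{e^*}u_\epsilon>1+\delta\}}
\end{equation*}
is supported in $\{\partial_{e^*}u_\epsilon>1+\delta\}$, where~\eqref{minkowskialternativ} forces $|Du_\epsilon|_E\geq\partial_{e^*}u_\epsilon>1+\delta$, keeping the entire computation strictly outside $\overline{E_\delta}$.

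Substituting $\phi$ and distributing $D\phi$ across the chain-rule decomposition produces on the left-hand side exactly the mixed $\mathcal{B}_\epsilon$-$D\zeta$-term appearing in~\eqref{eeabsorbed}, a quadratic term $\int_{B_R}\zeta\,\mathcal{B}_\epsilon(x,Du_\epsilon)(\partial_{e^*}Du_\epsilon,\partial_{e^*}Du_\epsilon)\,\bigchi_{\{\partial_{e^*}u_\epsilon>1+\delta\}}\,\dx$, and a spatial remainder $\int_{B_R}\langle\partial_{x_{e^*}}\hat{\h}_\epsilon(x,Du_\epsilon),D\phi\rangle\,\dx$. On $\{\partial_{e^*}u_\epsilon>1+\delta\}\subset\{|Du_\epsilon|_E\geq 1+\delta\}$ Lemma~\ref{bilinearelliptic} bounds the quadratic term from below by $(\epsilon+\tilde{C}(\delta))\int_{B_R}\zeta|\partial_{e^*}Du_\epsilon|^2\,\bigchi_{\{\partial_{e^*}u_\epsilon>1+\delta\}}\,\dx$ for some $\tilde{C}(\delta)>0$ depending only on $C_\F(\delta)$ and $\delta$, while the Lipschitz bound~\eqref{hatflipschitz} combined with~\eqref{representation} and $|e^*|\leq R_E$ yields the pointwise estimate $|\partial_{x_{e^*}}\hat{\h}_\epsilon(x,Du_\epsilon)|\leq C(\hat{C}_\F,N,R_E)$, which controls the remainder by $\int_{B_R}|D\zeta|(\partial_{e^*}u_\epsilon-(1+\delta))_+\,\dx+\int_{B_R}\zeta|\partial_{e^*}Du_\epsilon|\,\bigchi_{\{\partial_{e^*}u_\epsilon>1+\delta\}}\,\dx$.

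On the right-hand side the expansion $\partial_{e^*}\phi=\partial_{e^*}\zeta\,(\partial_{e^*}u_\epsilon-(1+\delta))_++\zeta\,\partial^2_{e^*e^*}u_\epsilon\,\bigchi_{\{\partial_{e^*}u_\epsilon>1+\delta\}}$ together with Cauchy--Schwarz produces one $|f||D\zeta|(\partial_{e^*}u_\epsilon-(1+\delta))_+$ contribution and one $\zeta|f||\partial_{e^*}Du_\epsilon|\,\bigchi_{\{\partial_{e^*}u_\epsilon>1+\delta\}}$ contribution. Applying Young's inequality with an $\epsilon$-independent parameter of order $\tilde{C}(\delta)$ splits each $|\partial_{e^*}Du_\epsilon|$-factor arising from both the spatial remainder and the datum term into a portion of size at most $\tfrac14\tilde{C}(\delta)\int_{B_R}\zeta|\partial_{e^*}Du_\epsilon|^2\,\bigchi_{\{\partial_{e^*}u_\epsilon>1+\delta\}}\,\dx$, which is absorbed into the ellipticity lower bound, and an $\epsilon$-independent remainder bounded by the two right-hand-side integrals of~\eqref{eeabsorbed}; this delivers the claimed estimate.

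The delicate technical point is the rigorous justification of the chain-rule identity, as $\hat{\F}$ is only $C^2$ off $\partial E$ and $\nabla^2\hat{\F}$ may become unbounded there. The key observation that makes the argument rigorous is that the support of $D\phi$ lies inside $\{|Du_\epsilon|_E>1+\delta\}\subset\R^n\setminus\overline{E_\delta}$, on which the Hessian bound~\eqref{hesseschrankedelta} holds, so a difference-quotient computation restricted to this set, combined with the $W^{2,2}_{\loc}$-regularity of $u_\epsilon$, turns the formal chain-rule manipulation into a rigorous identity and produces~\eqref{eeabsorbed}.
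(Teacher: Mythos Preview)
Your proposal is correct and follows essentially the same route as the paper: differentiate the weak form in direction $e^*$, test with $\phi=\zeta(\partial_{e^*}u_\epsilon-(1+\delta))_+$, split the left-hand side into the bilinear-form term and the spatial-derivative remainder, invoke Lemma~\ref{bilinearelliptic} on the set $\{\partial_{e^*}u_\epsilon>1+\delta\}\subset\{|Du_\epsilon|_E>1+\delta\}$ via~\eqref{minkowskialternativ}, control the spatial remainder by~\eqref{hatflipschitz}, and absorb the cross terms with Young's inequality. Your discussion of the chain-rule justification via the support of $D\phi$ and difference quotients is slightly more explicit than the paper's treatment, but the argument is the same.
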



\begin{proof}
For convenience, we will denote~$u$ in place of~$u_\epsilon$ and omit the dependence on the parameter~$\epsilon \in (0,1]$. Let~$e^*\in \partial E^*$ be taken arbitrarily. By testing the weak form~\eqref{weakformapprox} with the function~$\partial_{e^*}\phi$, where~$\phi\in C^\infty_0(B_R)$ denotes an arbitrary smooth cut-off function, and subsequently integrating by parts, we obtain
\begin{equation} \label{diffleftside}
    \int_{B_R}  \langle \partial_{e^*}[ \hat{\h}_\epsilon(x,Du) ], D\phi \rangle \,\dx = \int_{B_R} f \partial_{e^*}\phi\,\dx
\end{equation}
for any~$\phi\in C^{\infty}_0(B_R)$. Now, we choose the test function
$$\phi_{e^*} = \zeta ( \partial_{e^*}u-(1+\delta))_+,$$
 where~$\zeta\in C^1_{0}(B_R,[0,1])$ denotes a non-negative, smooth cut-off function. Indeed, this choice of test function can be justified via an approximation argument. In order to abbreviate notation, we simply write~$a\coloneqq 1+\delta$, for some arbitrary but fixed parameter~$\delta\in(0,1]$. 
A direct calculation verifies
\begin{equation*}
    \partial_j\phi_{e^*} = \partial_j\zeta  (\partial_{e^*}u-a)_+ + \zeta \partial_{j}\partial_{e^*} u  \bigchi_{\{\partial_{e^*}u>a\}} \qquad\mbox{a.e. in~$B_R$}
\end{equation*}
for any~$j\in\{1,\ldots,n\}$. The left-hand side in~\eqref{diffleftside} yields the following two integral terms
\begin{align*}
     \int_{B_R} & \langle \partial_{e^*}[ \hat{\h}_\epsilon(x,Du) ], D\phi_{e^*} \rangle \,\dx \\
     &=  \int_{B_R}  \hat{\B}_\epsilon(x,Du)(\partial_{e^*}Du,D\phi_{e^*}) \,\dx +  \int_{B_R}  \langle \partial_{e^*} \hat{\h}_\epsilon(x,Du), D\phi_{e^*} \rangle \,\dx \\
     &\eqqcolon \foo{I} + \foo{II}.
\end{align*}
We begin with the treatment of the first term~$\foo{I}$, which gives
\begin{align*}
    \foo{I} &= \int_{B_R} \hat{\B}_\epsilon(x,Du)(\partial_{e^*}Du,D\zeta)  (\partial_{e^*}u-a)_+ \,\dx \\
    &\quad + \int_{B_R} \zeta \hat{\B}_\epsilon(x,Du)(\partial_{e^*}Du,\partial_{e^*}Du)   \bigchi_{\{\partial_{e^*}u>a\}} \,\dx.
\end{align*}
For the second term, we wish to exploit Lemma~\ref{bilinearelliptic} in order to further bound the quantity~$\foo{I}$ below. Indeed, the lemma is applicable, due to the set inclusion
$$ \{ x\in B_R: \partial_{e^*}u \geq 1+\delta \} \subset \{ x\in B_R: |Du|_E \geq 1+\delta \}, $$
which is an immediate consequence of the display~\eqref{minkowskialternativ}, and also~\eqref{hatfvoraussetzung}. Thus, we further estimate the term~$\foo{I}$ from below by
\begin{align} \label{diffeinssummed}
    \foo{I} &\geq \int_{B_R} \hat{\B}_\epsilon(x,Du)(\partial_{e^*}Du,D\zeta)  (\partial_{e^*}u-a)_+ \,\dx \\
    &\quad + (\epsilon + \Tilde{C}(C_\F(\delta),\delta)) \int_{B_R} \zeta |\partial_{e^*}Du|^2 \bigchi_{\{\partial_{e^*}u>a\}} \,\dx \nonumber,
\end{align}
where~$\Tilde{C}(C_\F(\delta),\delta)$ denotes some positive constant only depending on~$\delta\in(0,1]$ that arises from~\eqref{hatfvoraussetzung}. In turn, we also exploited the definition of~$C_\F$ in~\eqref{schrankehessian} and~$C_\F(\delta)$ in~\eqref{hesseschrankedelta}. As a next step, we treat the second integral term~$\foo{II}$. Here, we employ the Cauchy-Schwarz inequality and exploit the Lipschitz assumption~\eqref{hatflipschitz} in combination with~\eqref{representation}, just as in the proof of Proposition~\ref{lowerboundprop}. This way, we obtain
\begin{align} \label{est:lstattm}
    \foo{II} &\leq C \int_{B_R} |D\phi_{e^*}| \,\dx \\
    &\leq C \int_{B_R} |D\zeta| (\partial_{e^*}u-a)_+ \,\dx + C \int_{B_R} \zeta |\partial_{e^*} Du| \bigchi_{\{\partial_{e^*}u >a\}} \,\dx \nonumber
\end{align}
with constant~$C=C(\hat{C}_\F,N,n,r_E)$. Thus, employing Young's inequality, we achieve
\begin{align} \label{diffzweisummed}
    \foo{II} &\leq C \int_{B_R} |D\zeta| (\partial_{e^*}u-a)_+ \,\dx + \mbox{$\frac{1}{4}$} \Tilde{C}(C_\F(\delta),\delta) \int_{B_R} \zeta |\partial_{e^*}Du|^2 \bigchi_{\{\partial_{e^*}u >a\}} \,\dx \\
    &\quad + C \int_{B_R} \zeta \bigchi_{\{\partial_{e^*}u >a\}} \,\dx \nonumber
\end{align}
with a constant~$C=C(C_\F(\delta),\hat{C}_\F,\delta,N,n,r_E)$. Now, the right-hand side of~\eqref{diffleftside} involving the datum~$f$ is dealt with. Similarly to the treatment of the second integral term~$\foo{II}$, we first estimate
\begin{align*}
    \foo{III} &\coloneqq \int_{B_R} f \partial_{e^*}\phi_{e^*}\,\dx \\
                &\leq \int_{B_R} |f| |D\zeta| (\partial_{e^*}u-a)_+  \,\dx + \int_{B_R} \zeta |f| |\partial_{e^*}D u| \bigchi_{\{\partial_{e^*}u >a\}}  \,\dx.
\end{align*}
 By further employing Young's inequality once more, there holds
\begin{align} \label{diffdatumsummed}
    \foo{III} &\leq \int_{B_R} |f| |D\zeta| (\partial_{e^*}u-a)_+  \,\dx  + \mbox{$\frac{1}{4}$} \Tilde{C}(C_\F(\delta),\delta) \int_{B_R} \zeta  |\partial_{e^*}Du|^2 \bigchi_{\{\partial_{e^*}u >a\}}  \,\dx \\
          &\quad + C \int_{B_R} \zeta |f|^2 \bigchi_{\{\partial_{e^*}u >a\}} \,\dx \nonumber
\end{align}
with~$C=C(C_\F(\delta),\delta)$. Combining all our estimates~\eqref{diffeinssummed},~\eqref{diffzweisummed}, and also~\eqref{diffdatumsummed}, we end up with
\begin{align} \label{diffcombined}
    &(\epsilon +\Tilde{C}(C_\F(\delta),\delta)) \int_{B_R} \zeta |\partial_{e^*} Du|^2 \bigchi_{\{\partial_{e^*}u >a\}}  \,\dx  +  \int_{B_R} \B_\epsilon(x,Du)(\partial_{e^*}Du,D\zeta) (\partial_{e^*}u-a)_+ \,\dx \nonumber\\
    &\leq  \mbox{$\frac{1}{2}$} \Tilde{C}(C_\F(\delta),\delta) \int_{B_R} \zeta |\partial_{e^*} Du|^2 \bigchi_{\{\partial_{e^*}u >a\}} \,\dx + C \int_{B_R} |D\zeta| (\partial_{e^*}u-a)_+ \,\dx + C \int_{B_R} \zeta \bigchi_{\{\partial_{e^*}u>a\}} \dx  \nonumber \\
    &\quad   + C \int_{B_R} |f| |D\zeta| (\partial_{e^*}u-a)_+  \,\dx + C\int_{B_R} \zeta |f|^2 \bigchi_{\{\partial_{e^*}u>a\}}  \,\dx \nonumber 
\end{align}
with a constant~$C=C(C_\F(\delta),\hat{C}_\F,\delta,N,n,R_E)$. Consequently, after reabsorbing all quantities on the right-hand side of the preceding estimate into the left-hand side, there holds
\begin{align*}
    &(\epsilon +\Tilde{C}(\delta)) \int_{B_R} \zeta |\partial_{e^*} Du|^2 \bigchi_{\{\partial_{e^*} u>a\}}  \,\dx +   \int_{B_R} \B_\epsilon(x,Du)(\partial_{e^*}Du,D\zeta) (\partial_{e^*}u-a)_+ \,\dx \\
    &\leq  C \int_{B_R} |D\zeta| (\partial_{e^*}u-a)_+ \big(1+|f| \big) \,\dx + C \int_{B_R} \zeta \bigchi_{\{\partial_{e^*}u>a\}} \big( 1+|f|^2 \big) \,\dx \nonumber
\end{align*}
with~$C=C(C_\F(\delta),\hat{C}_\F,\delta,N,n,r_E)$. This concludes the proof of the lemma. 
\end{proof}


It follows directly from Lemma~\ref{mainintegralin} that the function
$$v_\epsilon = (\partial_{e^*}u_\epsilon - (1+\delta))^2_+,$$
where~$\delta,\epsilon \in (0, 1]$ and~$e^*\in\partial E^*$ remain arbitrary, is a sub-solution to an elliptic linear equation. This matter of fact is crucial for exploiting the measure-theoretic information~\eqref{degeneratemeascond}, characterizing the degenerate regime.


\begin{mylem} \label{subsolutionlem}
    Let~$\delta,\epsilon\in(0,1]$,~$e^*\in\partial E^*$, and~$u_\epsilon$ denote the weak solution to~\eqref{approx}. Assume that both conditions~\eqref{schrankeeins} and~\eqref{schrankezwei} hold true on~$B_{2\rho}(x_0)\Subset B_R$. Then, the function
    \begin{equation} \label{subsol}
        v_\epsilon = (\partial_{e^*}u - (1+\delta))^2_+
    \end{equation}
    is a weak sub-solution to an elliptic linear equation, meaning that
    \begin{align} \label{subsolungl}
        \int_{B_\rho(x_0)} &\hat{\B}_\epsilon(x,Du_\epsilon)(D v_\epsilon,D\zeta)\,\dx \\
        &\leq C \bigg( \int_{B_\rho(x_0)}
        |D\zeta| (1+|f|) \bigchi_{\{v_\epsilon>0\}} \,\dx  + \int_{B_\rho(x_0)}\zeta (1+|f|^2) \bigchi_{\{v_\epsilon>0\}} \,\dx \bigg) \nonumber
    \end{align}
    holds true for any non-negative test function~$\zeta\in C^1_0(B_\rho(x_0))$, with~$C=C(C_\F(\delta),\hat{C}_\F,\delta,N,n,r_E)$.
\end{mylem}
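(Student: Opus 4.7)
The strategy is to obtain the claim directly from Lemma~\ref{mainintegralin}, exploiting the chain-rule identity that expresses $Dv_\epsilon$ through $\partial_{e^*}Du_\epsilon$. Since the map $t\mapsto t_+^2$ is of class $C^1(\R)$ with derivative $2t_+$, the regularity $u_\epsilon \in W^{2,2}_{\loc}(B_R)$ from Proposition~\ref{approxregularityeins} gives, almost everywhere in $B_R$,
\begin{equation*}
   Dv_\epsilon = 2(\partial_{e^*}u_\epsilon - (1+\delta))_+\,\partial_{e^*}Du_\epsilon.
\end{equation*}
Consequently, for any non-negative $\zeta \in C^1_0(B_\rho(x_0))$, the pointwise identity
\begin{equation*}
    \hat{\B}_\epsilon(x,Du_\epsilon)(Dv_\epsilon,D\zeta) = 2\hat{\B}_\epsilon(x,Du_\epsilon)(\partial_{e^*}Du_\epsilon,D\zeta)(\partial_{e^*}u_\epsilon - (1+\delta))_+
\end{equation*}
holds, and its right-hand side is (twice) the second integrand appearing on the left-hand side of~\eqref{eeabsorbed}.

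I would then extend such a test function $\zeta$ by zero to the larger ball $B_R$, which is legitimate since $B_\rho(x_0) \subset B_{2\rho}(x_0) \Subset B_R$, and apply Lemma~\ref{mainintegralin} with this choice. The first summand on the left-hand side of~\eqref{eeabsorbed}, namely
\begin{equation*}
    (\epsilon + \Tilde{C}(\delta))\int_{B_R}\zeta|\partial_{e^*}Du_\epsilon|^2 \bigchi_{\{\partial_{e^*}u_\epsilon > 1+\delta\}}\,\dx,
\end{equation*}
is non-negative and can therefore be discarded, producing
\begin{equation*}
\tfrac{1}{2}\int_{B_\rho(x_0)} \hat{\B}_\epsilon(x,Du_\epsilon)(Dv_\epsilon,D\zeta)\,\dx \leq C\int_{B_\rho(x_0)} |D\zeta|(\partial_{e^*}u_\epsilon - (1+\delta))_+(1+|f|)\,\dx + C\int_{B_\rho(x_0)}\zeta(1+|f|^2)\bigchi_{\{v_\epsilon > 0\}}\,\dx.
\end{equation*}

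It remains to convert the factor $(\partial_{e^*}u_\epsilon - (1+\delta))_+$ in the first right-hand side integral into the characteristic function $\bigchi_{\{v_\epsilon > 0\}}$. For this, the dual representation~\eqref{minkowskialternativ}, combined with the bounds~\eqref{schrankeeins} and~\eqref{schrankezwei}, yields
\begin{equation*}
    (\partial_{e^*}u_\epsilon - (1+\delta))_+ \leq (|Du_\epsilon|_E - (1+\delta))_+ \leq \mu \qquad\text{a.e. in~$B_{2\rho}(x_0)$},
\end{equation*}
so the factor is controlled by a universally bounded constant, and substituting this bound delivers~\eqref{subsolungl}. No substantial obstacle arises in this reduction; the only delicate point is the chain-rule identity for $Dv_\epsilon$, whose justification rests on the $C^1$-regularity of $t\mapsto t_+^2$ together with the second-order Sobolev regularity of the approximating solutions $u_\epsilon$.
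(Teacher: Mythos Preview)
Your proposal is correct and follows essentially the same argument as the paper: apply Lemma~\ref{mainintegralin} with the test function $\zeta$ extended by zero, discard the non-negative first term on the left-hand side, use the chain-rule identity $Dv_\epsilon = 2(\partial_{e^*}u_\epsilon - (1+\delta))_+\,\partial_{e^*}Du_\epsilon$ together with the bilinearity of $\hat{\B}_\epsilon$, and then bound $(\partial_{e^*}u_\epsilon-(1+\delta))_+ \leq \mu \leq M$ via~\eqref{schrankeeins}--\eqref{schrankezwei}. The only cosmetic remark is that the bound by $\mu$ introduces a dependence on $M$ into the constant, which the paper handles the same way (and absorbs explicitly in the subsequent Lemma~\ref{degiorgilem}).
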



\begin{proof}
We apply Lemma~\ref{mainintegralin} and note that the domain of integration~$B_R$ in all integral quantities on the right-hand side may be reduced to the smaller ball~$B_{\rho}(x_0) \cap \{\partial_{e^*}u_\epsilon>1+\delta\}$. Since the first term on the left-hand side of~\eqref{eeabsorbed} yields a non-negative contribution, we discard it. Due to assumption~\eqref{schrankeeins}, we have the bounds~$(\partial_{e^*}u_\epsilon-(1+\delta))_+ \leq \mu \leq M$, allowing us to estimate the quantities involving the datum~$f$ on the right-hand side of inequality~\eqref{eeabsorbed} further above.
Finally, we note that there holds
$$Dv_\epsilon = 2(\partial_{e^*} u_\epsilon-(1+\delta))_+ \partial_{e^*} Du_\epsilon \qquad \mbox{a.e. in~$B_\rho(x_0)$},$$ which, together with the bilinearity of~$B_\epsilon$, implies
\begin{equation*}
    2\hat{\B}_\epsilon(x,Du_\epsilon)(\partial_{e^*}Du_\epsilon,D\zeta) (\partial_{e^*}u_\epsilon-(1+\delta))_+ = \hat{\B}_\epsilon(x,Du_\epsilon)(Dv_\epsilon,D\zeta).
\end{equation*}
In turn, this yields the claimed estimate~\eqref{subsolungl}.
\end{proof}


To follow up, we immediately obtain an energy estimate of De Giorgi class-type involving the solution~$v_\epsilon$ to the linear elliptic equation in the sense of~\eqref{subsolungl}.


\begin{mylem}[A De Giorgi class-type estimate for a sub-solution] \label{degiorgilem}
    Let~$\delta,\epsilon\in(0,1]$,~$e^*\in\partial E^*$, and~$u_\epsilon$ denote the weak solution to~\eqref{approx}. Further, let~$v_\epsilon$ denote the weak sub-solution defined in~\eqref{subsol}. Then, for any~$k>0$ and any~$\tau\in(0,1)$ there holds 
    \begin{align*}
    \int_{B_{\tau\rho}(x_0)}|D(v_\epsilon-k)_+|^2\,\dx &\leq \frac{C}{(1-\tau)^2\rho^2}\int_{B_\rho(x_0)}(v_\epsilon-k)^2_+\,\dx \\
    &\quad + C(\rho^{2(1-\beta)} + \|f\|^2_{L^{n+\sigma}(B_R)}) |A_k|^{1-\frac{2}{n}+\frac{2\beta}{n}}
    \end{align*}
    with~$C=C(C_\F(\delta),\hat{C}_\F,\delta,M,N,n,r_E,\sigma)$ and~$\beta=\frac{\sigma}{n+\sigma}$. Here, the level set~$A_k$ is given by
    $$A_k\coloneqq \{x\in B_\rho(x_0):v_\epsilon(x)>k\}.$$
\end{mylem}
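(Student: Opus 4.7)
The statement is a standard Caccioppoli-type estimate for a non-negative weak sub-solution of a linear elliptic equation with bounded measurable coefficients and $L^{n+\sigma}$ right-hand side; its only non-routine feature is to check that the ellipticity we invoke is uniform with respect to $\epsilon \in (0,1]$ and that the right-hand side terms rearrange so as to produce the prescribed exponent on $|A_k|$. My plan is to test the integral inequality~\eqref{subsolungl} (which is available by Lemma~\ref{subsolutionlem}) against $\phi = \zeta^2 (v_\epsilon-k)_+$, where $\zeta \in C^1_0(B_\rho(x_0),[0,1])$ is a standard cut-off satisfying $\zeta \equiv 1$ on $B_{\tau\rho}(x_0)$ and $|D\zeta| \leq 2/((1-\tau)\rho)$. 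Note that $\phi$ is admissible by approximation since $v_\epsilon \in W^{1,2}(B_\rho(x_0))$ by Proposition~\ref{approxregularityeins}.

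Expanding $D\phi = \zeta^2 D(v_\epsilon - k)_+ + 2\zeta(v_\epsilon - k)_+ D\zeta$ and using the bilinearity of $\hat{\B}_\epsilon$ splits the left-hand side into a principal term and a cross-term. Because the integrand is supported in $\{v_\epsilon > 0\} \subset \{\partial_{e^*}u_\epsilon > 1+\delta\}$ and, by~\eqref{minkowskialternativ}, this is contained in $\{|Du_\epsilon|_E \geq 1+\delta\} \cap \{|Du_\epsilon|_E \leq M\}$ thanks to~\eqref{schrankeeins}--\eqref{schrankezwei}, Lemma~\ref{bilinearelliptic} gives a lower bound $c(\delta)|\eta|^2$ and an upper bound $C_\F(\delta)|\eta|^2$ for the bilinear form with constants independent of $\epsilon$. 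Combining this with Cauchy--Schwarz and Young's inequality on the cross-term allows reabsorption into the principal term and yields
\begin{equation*}
\int_{B_\rho(x_0)} \zeta^2 |D(v_\epsilon-k)_+|^2\,\dx \leq \frac{C}{(1-\tau)^2\rho^2}\int_{B_\rho(x_0)} (v_\epsilon - k)_+^2\,\dx + C\big(\mathrm{RHS of~\eqref{subsolungl}}\big),
\end{equation*}
where the constant $C$ depends on $C_\F(\delta), \hat{C}_\F, \delta, N, n, r_E$ but not on $\epsilon$.

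For the two source terms on the right-hand side of~\eqref{subsolungl}, I would note that $(v_\epsilon-k)_+ \leq v_\epsilon \leq (\partial_{e^*}u_\epsilon)^2 \leq M^2$ on $A_k$, via~\eqref{minkowskialternativ} and~\eqref{minkowskischranke}; thus, up to the $M$-dependent constants, both terms are controlled by $\int_{A_k}(1+|f|^2)(|D\zeta|+\zeta)\,\dx$, and after crudely using $|D\zeta| \leq C/((1-\tau)\rho)$ and absorbing an additional $(1-\tau)^{-2}\rho^{-2}\int(v_\epsilon-k)_+^2$-type contribution via Young's inequality, one is left with $C\int_{A_k}(1+|f|^2)\,\dx$. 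Hölder's inequality with conjugate exponents $\frac{n+\sigma}{2}$ and $\frac{n+\sigma}{n+\sigma-2}$ then yields
\begin{equation*}
\int_{A_k} (1 + |f|^2)\,\dx \leq \Big(|A_k|^{\frac{2}{n+\sigma}} + \|f\|^2_{L^{n+\sigma}(B_R)}\Big)|A_k|^{1-\frac{2}{n+\sigma}},
\end{equation*}
and, trivially, $|A_k|^{2/(n+\sigma)} \leq |B_\rho|^{2/(n+\sigma)} = C(n)\rho^{2n/(n+\sigma)} = C(n)\rho^{2(1-\beta)}$ with $\beta$ as in~\eqref{beta}. Since $1 - 2/(n+\sigma) = 1 - 2/n + 2\beta/n$, this produces precisely the exponent claimed.

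Because $\zeta \equiv 1$ on $B_{\tau\rho}(x_0)$, restricting the left-hand side integral to $B_{\tau\rho}(x_0)$ and collecting constants completes the argument. The only mildly delicate point is the first step: one needs to keep track that the lower ellipticity constant from Lemma~\ref{bilinearelliptic} stays $\epsilon$-independent and positive on $\{v_\epsilon > 0\}$, which is guaranteed by the condition $|Du_\epsilon|_E \geq 1+\delta$ there; every other step is bookkeeping with Young and Hölder inequalities.
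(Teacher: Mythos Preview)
Your proposal is correct and follows essentially the same route as the paper: test~\eqref{subsolungl} with $\eta^2(v_\epsilon-k)_+$, use the $\epsilon$-independent ellipticity from Lemma~\ref{bilinearelliptic} on the support, absorb the cross-term via Young, and finish with H\"older's inequality with exponents $\big(\tfrac{n+\sigma}{2},\tfrac{n+\sigma}{n+\sigma-2}\big)$. The only cosmetic difference is that the paper first redefines $\hat{\B}_\epsilon$ to the identity on $\{\partial_{e^*}u_\epsilon\le 1+\delta\}$ so as to have a globally elliptic bilinear form and then invokes standard De Giorgi references, whereas you track the support $A_k\subset\{v_\epsilon>0\}\subset\{|Du_\epsilon|_E\ge 1+\delta\}$ directly; both devices serve the same purpose and your bookkeeping of the source terms and the exponent $1-\tfrac{2}{n+\sigma}=1-\tfrac{2}{n}+\tfrac{2\beta}{n}$ is accurate.
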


\begin{proof}
Since the bilinear form~$\hat{\B}_\epsilon(x,Du_\epsilon)(Dv_\epsilon,D\zeta)$ vanishes on the set of points 
$$\{x\in B_\rho(x_0): \partial_{e^*} u_\epsilon \leq 1+\delta\},$$
we may redefine~$\B_\epsilon$ to the identity operator on this very subset of~$B_R$ . Respectively, the redefined bilinear form~$\Tilde{\B}_\epsilon$ is given by
\begin{equation*}
    \Tilde{\B}_\epsilon(x,Du_\epsilon(x)) \coloneqq \begin{cases}
        \Tilde{\foo{I}}_n, & \text{on~$\{x\in B_\rho(x_0): \partial_{e^*}u_\epsilon(x) \leq 1+\delta\}$} \\
        \hat{\B}_\epsilon(x,Du_\epsilon(x)), & \text{on~$\{x\in B_\rho(x_0): \partial_{e^*}u_\epsilon(x) > 1+\delta\}$},
    \end{cases}
\end{equation*}
where~$\Tilde{\foo{I}}_n(\xi,\zeta)\coloneqq \langle \foo{I}_n\xi, \zeta \rangle  =  \langle \xi, \zeta \rangle $ for any~$\xi,\zeta\in\R^n$. Moreover, we exploit the bounds~\eqref{schrankeeins} and~\eqref{schrankezwei}. This way,~$v_\epsilon$ is a weak sub-solution to an elliptic linear equation in the sense that
\begin{equation} \label{subsolunglneu}
     \int_{B_\rho(x_0)} \Tilde{\B}_\epsilon(x,Du_\epsilon)(D v_\epsilon,D\zeta)\,\dx \leq C \int_{B_\rho(x_0)}\big(|D\zeta|(1+|f|) + \zeta|f|^2 + 1\big) \bigchi_{\{v_\epsilon >0\}} \,\dx
\end{equation}
holds true for any non-negative~$\zeta\in C^1_0(B_\rho(x_0))$ with~$C=C(C_\F(\delta),\hat{C}_\F,\delta,M,N,n,r_E)$. Due to our construction, the operator~$\Tilde{\B}_\epsilon(x,Du_\epsilon)$ is elliptic in~$B_\rho(x_0)$, i.e. there exists a constant~$C=C(C_\F(\delta),\delta)\geq 1$, such that
\begin{equation} \label{unifell}
   C^{-1}|\xi|^2\leq \Tilde{B}_\epsilon(x,Du_\epsilon(x))(\xi,\xi) \leq C|\xi|^2
\end{equation}
holds true for any~$x\in B_\rho(x_0)$ and any~$\xi\in\R^n$. The preceding bound holds true with constant~$C=1$ in the set of points~$\{x\in B_\rho(x_0):  \partial_{e^*}u_\epsilon(x) \leq 1+\delta\}$, which readily follows from the very definition of~$\Tilde{B}_\epsilon$. On the complementary subset~$\{x\in B_\rho(x_0): \partial_{e^*}u_\epsilon(x) > 1+\delta\}$, we apply Lemma~\ref{bilinearelliptic} and recall that~$C_\F$ can be controlled from below by~$1$ and from above by~$C_\F(\delta)$ on the considered subset of points. We note once more that the set inclusion
$$ \{ x\in B_R: \partial_{e^*}u \geq 1+\delta \} \subset \{ x\in B_R: |Du|_E \geq 1+\delta \} $$
is exploited. For the treatment of the upper bound, we exploited the fact that~$\epsilon\in(0,1]$ and~\eqref{derivativebounds}. The claimed energy estimate now follows in a standard way by testing the identity~\eqref{subsolunglneu} with~$\zeta = \eta^2 (v_\epsilon-k)_+$, where the smooth cut-off function~$\eta\in C^1_0(B_\rho(x_0))$ is chosen is a way, such that~$\eta\equiv 1$ on~$B_{\tau\rho}(x_0)$ and~$|D\eta|\leq \frac{2}{(1-\tau)\rho}$, and by applying Young's inequality and Hölder's inequality with exponents~$(\frac{n+\sigma}{2},\frac{n+\sigma}{n+\sigma-2})$. We omit the details and instead refer the reader to~\cite[Chapter~10.1]{dibenedetto2023parabolic} and also to~\cite[Chapter~12.1]{dibenedetto2023parabolic},~\cite[Proposition~7.1]{degeneratesystems} in the parabolic setting. 
\end{proof}


Let~$\delta,\epsilon\in(0,1]$,~$e^*\in\partial E^*$, and~$u_\epsilon$ be the unique weak solution to~\eqref{approx}. Assume that~\eqref{degeneratemeascond} holds true for some parameter~$\nu>0$ on a ball~$B_\rho(x_0)\subset B_{2\rho}(x_0)\Subset B_R$. We note that condition~\eqref{schrankeeins} implies the upper bound
\begin{equation} \label{vepsbound}
    \esssup\limits_{B_{2\rho}(x_0)}v_\epsilon \leq \mu^2.
\end{equation}
As Lemma~\ref{degiorgilem} is already at our disposal, we immediately obtain the subsequent two lemmas of De Giorgi-type, where we refrain from stating the proof due to similarity and rather refer the reader to~\cite[Lemma~6.1 -- Lemma~6.2]{bogelein2023higher} for a detailed elaboration.


\begin{mylem} \label{degiorgilemeins}
    For any $\theta\in(0,1)$, there exists a parameter $\Tilde{\nu}\in(0,1)$, which depends on the given data $$(C_\F(\delta),\hat{C}_\F,\|f\|_{L^{n+\sigma}(B_R)},\delta,M,n,R,r_E,\sigma),$$
    such that: if the measure-theoretic information
\begin{equation*}
    |\{x\in B_{\rho}(x_0):v_\epsilon(x)>(1-\theta)\mu^2\}|<\Tilde{\nu}|B_\rho(x_0)|
\end{equation*}
    applies for any~$e^*\in\partial E^*$, then we either have
\begin{equation*}
    \mu^2<\frac{\rho^\beta}{\theta}
\end{equation*}
    or there holds
\begin{equation*}
    v_\epsilon \leq \big(1- \textstyle{\frac{1}{2}} \theta\big)\mu^2\quad\text{a.e. in~$B_{\frac{\rho}{2}}(x_0)$.}
\end{equation*}
\end{mylem}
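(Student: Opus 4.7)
The proof follows the De~Giorgi iteration pattern: I couple the energy inequality of Lemma~\ref{degiorgilem} for the sub-solution $v_\epsilon$ (cf.\ Lemma~\ref{subsolutionlem}) with a Sobolev embedding to obtain a recursive bound for the normalized measures of level sets of $v_\epsilon$ on a shrinking sequence of balls, and then conclude via the geometric convergence Lemma~\ref{geometriclem}. I fix an arbitrary $e^{*}\in\partial E^{*}$ and work with the associated $v_\epsilon$; the threshold $\tilde\nu$ will depend only on the listed data and not on $e^{*}$, so the measure hypothesis imposed for every $e^{*}\in\partial E^{*}$ yields the conclusion for each such $e^{*}$ individually.

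For the iteration I choose decreasing radii $\rho_{i}=\tfrac{\rho}{2}+\tfrac{\rho}{2^{i+1}}$ and increasing levels $k_{i}=(1-\theta)\mu^{2}+\tfrac{\theta\mu^{2}}{2}(1-2^{-i})$, so that $\rho_{i}\downarrow\tfrac{\rho}{2}$, $k_{i}\uparrow(1-\tfrac{\theta}{2})\mu^{2}$, $\rho_{i}-\rho_{i+1}=\rho\cdot 2^{-(i+2)}$, and $k_{i+1}-k_{i}=\theta\mu^{2}\cdot 2^{-(i+2)}$. Set $A_{i}\coloneqq B_{\rho_{i}}(x_{0})\cap\{v_{\epsilon}>k_{i}\}$ and $Y_{i}\coloneqq|A_{i}|/|B_{\rho_{i}}|\in[0,1]$. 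The measure-theoretic hypothesis reads $Y_{0}<\tilde\nu$, while $Y_{i}\to 0$ would force $|\{v_{\epsilon}>(1-\tfrac{\theta}{2})\mu^{2}\}\cap B_{\rho/2}(x_{0})|=0$, which is the desired conclusion.

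Picking a standard cut-off $\eta_{i}$ equal to $1$ on $B_{\rho_{i+1}}$ and compactly supported in $B_{(\rho_{i}+\rho_{i+1})/2}$, I apply Sobolev's inequality to $(v_{\epsilon}-k_{i})_{+}\eta_{i}$, plug in Lemma~\ref{degiorgilem} between $B_{\rho_{i}}$ and $B_{(\rho_{i}+\rho_{i+1})/2}$, use the level-set lower bound $(k_{i+1}-k_{i})^{2^{*}}|A_{i+1}|\le\int(v_{\epsilon}-k_{i})^{2^{*}}_{+}$, and bound $(v_{\epsilon}-k_{i})_{+}\le\theta\mu^{2}$ thanks to \eqref{vepsbound}; direct manipulations then yield
\begin{equation*}
    Y_{i+1}^{(n-2)/n}\le C\cdot 16^{i}\,Y_{i}+\frac{C\cdot 16^{i}}{\theta^{2}\mu^{4}}\bigl(\rho^{2}+\|f\|_{L^{n+\sigma}(B_{R})}^{2}\,\rho^{2\beta}\bigr)\,Y_{i}^{1-2(1-\beta)/n}.
\end{equation*}
The first summand is the benign homogeneous contribution, on its own producing the textbook geometric recursion $Y_{i+1}\le Cb^{i}Y_{i}^{n/(n-2)}$; the genuine obstacle is the second summand, whose exponent $1-2(1-\beta)/n<1$ is too low to close the iteration on its own.

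The dichotomy resolves this. If $\mu^{2}<\rho^{\beta}/\theta$ the first alternative of the statement already holds, so one may assume $\mu^{4}\ge\rho^{2\beta}/\theta^{2}$, which gives
\begin{equation*}
    \frac{\rho^{2}}{\theta^{2}\mu^{4}}\le\rho^{2(1-\beta)}\le C,\qquad \frac{\rho^{2\beta}\,\|f\|_{L^{n+\sigma}(B_{R})}^{2}}{\theta^{2}\mu^{4}}\le\|f\|_{L^{n+\sigma}(B_{R})}^{2}.
\end{equation*}
Using additionally $Y_{i}\le 1$ (so $Y_{i}\le Y_{i}^{1-2(1-\beta)/n}$) the recursion collapses to $Y_{i+1}\le C\,b^{i}\,Y_{i}^{1+\kappa}$ with $\kappa=2\beta/(n-2)>0$ and constants $C$, $b$ depending only on the listed data. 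Lemma~\ref{geometriclem} then forces $Y_{i}\to 0$ provided $\tilde\nu\le C^{-1/\kappa}b^{-1/\kappa^{2}}$, which dictates the choice of $\tilde\nu$. The case $n=2$ is handled identically after replacing $2^{*}$ by an arbitrary finite exponent $p>2$ in the Sobolev step, producing the analogous recursion with a different positive $\kappa$.
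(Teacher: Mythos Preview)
Your proof is correct and follows precisely the standard De~Giorgi iteration that the paper invokes by reference (the paper omits the argument and points to \cite[Lemma~6.1]{bogelein2023higher}). The only minor remark is that your bound $\rho^{2(1-\beta)}\le C$ implicitly uses $\rho\le R$, which is fine since $R$ appears in the data list; otherwise the setup, the recursion, the use of the dichotomy $\mu^{2}\ge\rho^{\beta}/\theta$ to absorb the inhomogeneous term, and the appeal to Lemma~\ref{geometriclem} all match the intended argument.
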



At this juncture, it is crucial to assume that condition~\eqref{degeneratemeascond} holds true for all~$ e^* \in \partial E^* $. This assumption allows the derivation of Lemma~\ref{degiorgilemzwei}, which serves as our counterpart to \cite[Lemma~6.2]{bogelein2023higher}, demonstrated in the appendix of the mentioned article.


\begin{mylem} \label{degiorgilemzwei}
    Assume that there exists some~$\nu\in(0,1)$ for which the measure-theoretic information~$\eqref{degeneratemeascond}$ holds true for any~$e^*\in \partial E^*$. Then, for any~$i_*\in\N$ we either have
    \begin{equation*}
    \mu^2<2^{i_*}\frac{\rho^\beta}{\nu}
\end{equation*}
    or
\begin{equation*}
    |\{x\in B_{\rho}(x_0):v_\epsilon(x)>(1-2^{-i_*}\nu)\mu^2\}|<\frac{C}{\nu\sqrt{i_*}}|B_\rho(x_0)|
\end{equation*}
for any~$e^*\in \partial E^*$, with~$C=C(C_\F(\delta),\hat{C}_\F,\delta,\|f\|_{L^{n+\sigma}(B_R)},M,N,n,R,r_E,\sigma) \geq 1$.
\end{mylem}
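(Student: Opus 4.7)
\textbf{Proof plan for Lemma~\ref{degiorgilemzwei}.}
The strategy is to adapt the classical De~Giorgi logarithmic decay mechanism for sub-solutions of linear elliptic equations, using Lemma~\ref{degiorgilem} as the Caccioppoli building block and running the iteration on the geometric sequence of levels
\begin{equation*}
    k_j \coloneqq (1-2^{-j}\nu)\mu^2, \qquad j = 1,2,\ldots,i_*,
\end{equation*}
which accumulate at $\mu^2$ as $j\to\infty$. Throughout, I fix an arbitrary $e^*\in\partial E^*$ and set $Y_j \coloneqq |\{v_\epsilon > k_j\}\cap B_\rho(x_0)|$ and $A_j \coloneqq \{k_j < v_\epsilon \le k_{j+1}\}\cap B_\rho(x_0)$. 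The measure-theoretic assumption~\eqref{degeneratemeascond}, translated to $v_\epsilon$, reads $|\{v_\epsilon \le (1-\nu)^2\mu^2\}\cap B_\rho(x_0)| \ge \nu|B_\rho(x_0)|$, and since $(1-\nu)^2 \le 1-\nu/2$ for $\nu\in(0,\tfrac14]$, this gives $|B_\rho(x_0)\setminus\{v_\epsilon > k_j\}| \ge \nu|B_\rho(x_0)|$ for every $j\ge 1$.

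The crucial estimate couples the De~Giorgi isoperimetric inequality applied between consecutive levels $k_j < k_{j+1}$ on the ball $B_\rho(x_0)$,
\begin{equation*}
    (k_{j+1}-k_j)\,Y_{j+1} \le \frac{C(n)\,\rho^{n+1}}{|B_\rho(x_0)\setminus\{v_\epsilon > k_j\}|}\int_{A_j}|Dv_\epsilon|\,\dx \le \frac{C(n)\,\rho}{\nu}\int_{A_j}|Dv_\epsilon|\,\dx,
\end{equation*}
with the Caccioppoli-type inequality of Lemma~\ref{degiorgilem} applied to $(v_\epsilon - k_j)_+$ on $B_{2\rho}(x_0)$ with parameter $\tau = \tfrac12$. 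Using the pointwise bound $(v_\epsilon - k_j)_+ \le \mu^2 - k_j = 2^{-j}\nu\mu^2$ provided by~\eqref{vepsbound} and invoking the alternative clause $\mu^2 \ge 2^{i_*}\rho^\beta/\nu$ of the lemma (which forces $\rho^{2\beta} \le 4^{-j}\nu^2\mu^4$ for every $j\le i_*$, thereby allowing the lower-order contribution $C(\rho^{2(1-\beta)}+\|f\|^2_{L^{n+\sigma}(B_R)})|A_{k_j}|^{1-2/n+2\beta/n}$ to be absorbed into a multiple of $4^{-j}\nu^2\mu^4\rho^{n-2}$), Lemma~\ref{degiorgilem} yields
\begin{equation*}
    \int_{B_\rho(x_0)}|D(v_\epsilon - k_j)_+|^2\,\dx \le C\cdot 4^{-j}\nu^2\mu^4\rho^{n-2}.
\end{equation*}

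Combining the two ingredients via Cauchy-Schwarz, $\int_{A_j}|Dv_\epsilon|\,\dx \le |A_j|^{1/2}\big(\int_{B_\rho(x_0)}|D(v_\epsilon-k_j)_+|^2\,\dx\big)^{1/2}$, and plugging into the isoperimetric inequality while invoking $k_{j+1}-k_j = 2^{-(j+1)}\nu\mu^2$, the factors $2^{-j}$, $\nu$, and $\mu^2$ cancel precisely, leaving the non-iterative inequality
\begin{equation*}
    Y_{j+1} \le \frac{C}{\nu}\,\rho^{n/2}\,|A_j|^{1/2} \qquad \text{for every }1 \le j \le i_*-1.
\end{equation*}
By monotonicity $Y_{i_*} \le Y_{j+1}$; squaring, summing over $j=1,\ldots,i_*-1$, and exploiting the pairwise disjointness of the slabs to bound $\sum_j |A_j| \le |B_\rho(x_0)|$ gives
\begin{equation*}
    (i_*-1)\,Y_{i_*}^2 \le \frac{C}{\nu^2}\,\rho^n\,|B_\rho(x_0)| \le \frac{C}{\nu^2}\,|B_\rho(x_0)|^2,
\end{equation*}
and taking square roots yields $Y_{i_*} \le C(\nu\sqrt{i_*})^{-1}|B_\rho(x_0)|$, which is precisely the second alternative. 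The main obstacle is the delicate bookkeeping between the $j$-dependent Caccioppoli gain $4^{-j}$, the shrinking level gaps $2^{-(j+1)}\nu\mu^2$, and the $\rho^\beta$-type Calderón–Zygmund remainders: balancing these is exactly what forces the smallness dichotomy $\mu^2 < 2^{i_*}\rho^\beta/\nu$ in the statement. The non-symmetry of the Minkowski functional plays no role here, since Lemma~\ref{subsolutionlem} has already reduced everything to a linear elliptic sub-solution inequality with ellipticity constant depending only on $\delta$.
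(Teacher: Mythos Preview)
Your proof is correct and follows precisely the classical De~Giorgi measure-shrinking argument that the paper invokes by reference to~\cite[Lemma~6.2]{bogelein2023higher}: coupling the isoperimetric inequality across the dyadic levels $k_j=(1-2^{-j}\nu)\mu^2$ with the Caccioppoli estimate of Lemma~\ref{degiorgilem}, absorbing the lower-order $\rho^\beta$-term via the dichotomy clause, and summing the telescoping slab measures. One minor bookkeeping remark: when you invoke Lemma~\ref{degiorgilem} with outer ball $B_{2\rho}(x_0)$, note that the sub-solution inequality of Lemma~\ref{subsolutionlem} in fact descends from Lemma~\ref{mainintegralin}, which is valid for test functions supported anywhere in $B_R$, so the enlargement to $B_{2\rho}(x_0)$ is legitimate and the bound~\eqref{vepsbound} is available there.
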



With the aid of both Lemma~\ref{degiorgilemeins} and Lemma~\ref{degiorgilemzwei} we are finally in position to provide the proof of Proposition~\ref{degenerateproposition}. 


\begin{proof}[\textbf{\upshape Proof of Proposition~\ref{degenerateproposition}}]
Let~$\Tilde{\nu}\in(0,1)$ denote the parameter from Lemma~\ref{degiorgilemeins} and $C\geq 1$ the constant from Lemma~\ref{degiorgilemzwei}. We choose an integer~$i_*\in\N$ large enough, such that
\begin{equation*}
    i_* \geq \Big(\frac{C}{\Tilde{\nu}\nu}\Big)^2
\end{equation*}
is satisfied. Consequently, due to the dependence of the parameter~$\Tilde{\nu}\in(0,1)$ and the constant~$C\geq 1$,~$i_*$ depends on the data
$$(C_\F(\delta),\hat{C}_\F,\delta,M,\|f\|_{L^{n+\sigma}(B_R)},N,n,R,r_E,\sigma).$$
An application of Lemma~\ref{degiorgilemzwei} yields that either there holds
   \begin{equation*}
     \mu^2<\frac{2^{i_*}\rho^{\beta}}{\nu}
\end{equation*}
    or
\begin{equation} \label{laststep}
    |\{x\in B_{\rho}(x_0):v_\epsilon(x)>(1-2^{-i_*}\nu)\mu^2\}|<\frac{C}{\nu\sqrt{i_*}}|B_\rho(x_0)|
\end{equation}
for any~$e^*\in\partial E^*$. The former case can be excluded by choosing a radius~$\hat{\rho}\in(0,1)$ small enough, such that the relation 
 $$\hat{\rho}^\beta \leq \frac{\delta^2 \nu}{2^{i_*}}$$
 is satisfied, and perform all the above steps for an arbitrarily chosen radius~$\rho\leq \hat{\rho}\leq 1$. The radius~$\hat{\rho}$ thus depends on the data as stated in Proposition~\ref{degenerateproposition}. In particular, we rely on assumption~\eqref{deltamu} in order to do so. If the other case~\eqref{laststep} applies, we are in position to employ Lemma~\ref{degiorgilemeins} with the choice~$\theta=2^{-i_*}\nu$, which yields that, either we have
   \begin{equation*}
    \mu^2<\frac{2^{i_*}\rho^{\beta}}{\nu}
\end{equation*}
or
\begin{equation*}
    v_\epsilon \leq (1-2^{-(i_*+1)}\nu)\mu^2\quad\text{a.e. in~$B_{\frac{\rho}{2}}(x_0)$}
\end{equation*}
holds true for any~$e^*\in\partial E^*$. The former case is already excluded through the choice of~$\hat{\rho}\in(0,1)$ small enough in dependence of the data, as explained above. In the second case, we note that due to
$$v_\epsilon = ( \partial_{e^*} u_\epsilon-(1+\delta))^2_+,$$
we take roots in the preceding inequality to obtain that
$$ ( \partial_{e^*} u_\epsilon-(1+\delta))_+ \leq \kappa \mu $$
for any~$\kappa\geq (1-2^{-(i_*+1)}\nu)^{\frac{1}{2}}$. Thus, the parameter~$\kappa$ may be chosen appropriately, such that~$\kappa\in\big[2^{-\frac{\Tilde{\beta}}{2}},1\big)$ is satisfied, where~$\alpha\in(0,1)$ denotes the parameter from Proposition~\ref{nondegenerateproposition},~$\beta$ is given in~\eqref{beta}, and~$0<\Tilde{\beta}<\alpha<\beta$. Finally, since the preceding estimate holds for all~$e^*\in \partial E^*$, we eventually pass to the supremum over all~$e^*\in\partial E^*$ and exploit~\eqref{minkowskialternativ}. 
This yields the desired quantitative shrinking of supremum
$$ (|Du|_E-(1+\delta))_+ \leq \kappa\mu $$
in~$B_{\frac{\rho}{2}}(x_0)$ for any~$\kappa\in\big[2^{-\frac{\Tilde{\beta}}{2}},1\big)$ and finishes the proof of the proposition and the degenerate regime.
\end{proof}


\nocite{*}
\bibliographystyle{plain}
\bibliography{Literatur.bib}


\end{document}